\newtheorem{Th}{Theorem}[section]
\newtheorem{Prop}[Th]{Proposition}
\newtheorem{Lem}[Th]{Lemma}
\newtheorem{Cor}[Th]{Corollary}
\newtheorem{Def}[Th]{Definition}
\newcommand{\wt}{\widetilde}
\newcommand{\vp}{\varphi}
\newcommand{\eps}{\varepsilon}
\def\div{\mathop{\mathrm{div}\,}}
\def\span{\mathrm{span}}
\def\Z{\mathbb{Z}}
\def\R{\mathbb{R}}
\def\J{\mathcal{J}}
\def\D{\mathcal{D}}
\def\irn{\int_{\rn}}
\def\rn{\mathbb{R}^N}
\newcommand{\cC}{{\mathcal C}}
\newcommand{\cD}{{\mathcal D}}
\newcommand{\cE}{{\mathcal E}}
\newcommand{\cF}{{\mathcal F}}
\newcommand{\cH}{{\mathcal H}}
\newcommand{\cI}{{\mathcal I}}
\newcommand{\cJ}{{\mathcal J}}
\newcommand{\cN}{{\mathcal N}}
\newcommand{\cO}{{\mathcal O}}
\newcommand{\cS}{{\mathcal S}}
\newcommand{\cX}{{\mathcal X}}
\newcommand{\cY}{{\mathcal Y}}
\newcommand{\cZ}{{\mathcal Z}}
\newcommand{\DF}{\cD_{\cF}}
\newcommand{\UU}{\mathbf{U}}
\newcommand{\VV}{\mathbf{V}}
\newcommand{\BB}{\mathbf{B}}
\newcommand{\mbS}{\mathbb{S}}
\newcommand{\SO}{\cS\cO}
\newcommand{\vphi}{\varphi}
\newcommand{\De}{\Delta}
\newcommand{\Om}{\Omega}
\def\curlop{\nabla\times}
\numberwithin{equation}{section}
\title[Cylindrically symmetric equations]{Multiple solutions to cylindrically symmetric curl-curl problems and related Schr\"odinger equations with singular potentials}
\author[M. Gaczkowski]{Michał Gaczkowski}
\author[J. Mederski]{Jaros\l aw Mederski}
\author[J. Schino]{Jacopo Schino}
\address[J. Mederski]{\newline\indent
	Institute of Mathematics,
	\newline\indent 
	Polish Academy of Sciences,
	\newline\indent 
	ul. \'Sniadeckich 8, 00-656 Warsaw, Poland
}
\email{\href{mailto:jmederski@impan.pl}{jmederski@impan.pl}}
\address[J. Schino]{\newline\indent
	Institute of Mathematics,
	\newline\indent 
	Polish Academy of Sciences,
	\newline\indent 
	ul. \'Sniadeckich 8, 00-656, Warsaw, Poland
	\newline\indent
	and
	\newline\indent
	Department of Mathematics,
	\newline\indent
	North Carolina State University,
	\newline\indent
	2311 Stinson Drive, 27607 Raleigh, NC, USA
}
\email{\href{mailto:jschino@ncsu.edu}{jschino@ncsu.edu}}
\address[M. Gaczkowski]{\newline\indent 
	Faculty of Mathematics and and Information Science,
	\newline\indent 
	Warsaw University of Technology,
	\newline\indent 
	ul. Koszykowa 75, 00-662, Warsaw, Poland
}
\email{\href{mailto:m.gaczkowski@mini.pw.edu.pl}{m.gaczkowski@mini.pw.edu.pl}}
\dedicatory{In memory of Enrico Jannelli}
\subjclass[2010]{Primary: 35Q60; Secondary: 35J20, 78A25.}
\date{\today}
\begin{document}
	
%\begin{abstract}
%We look for multiple solutions $\UU\colon\mathbb{R}^3\to\mathbb{R}^3$ to the curl-curl problem
%\[
%\curlop\curlop\UU=h(x,\UU),\qquad x\in\R^3,
%\]
%with a nonlinear function $h\colon\R^3\times\R^3\to\R^3$ which is critical in $\R^3$, i.e., $h(x,\UU)=|\UU|^4\UU$, or has subcritical growth at infinity. If $h$ is radial in $\UU$, $N=3$, $K=2$, and $a=1$ below, then we show that the solutions to the problem above are in one to one correspondence with the solutions to the following Schr\"odinger equation
%\[
%-\Delta u+\frac{a}{r^2}u=f(x,u),\qquad u\colon\mathbb{R}^N\to \mathbb{R},
%\]
%where $x=(y,z)\in \mathbb{R}^K\times \mathbb{R}^{N-K}$, %$N>K\geq 2$, 
%$r=|y|$, and $a>a_0\in(-\infty,0]$.
%In the critical case, the multiplicity problem for the latter equation has been studied only in the autonomous case $a=0$ and the available methods seem to be insufficient for the problem involving the singular potential, i.e., $a\neq 0$, due to the lack of conformal invariance. Therefore we develop methods for the critical curl-curl problem and show  the multiplicity of bound states for both equations in the case  $N=3$, $K=2$, and $a=1$. In the subcritical case, instead, applying a critical point theory to the Schr\"odinger equation above, we find infinitely many bound states for both problems.
%\end{abstract}

\begin{abstract}
	We look for multiple solutions $\UU\colon\mathbb{R}^3\to\mathbb{R}^3$ to the curl-curl problem
	\[
	\curlop\curlop\UU=h(x,\UU),\qquad x\in\R^3,
	\]
	with a nonlinear function $h\colon\R^3\times\R^3\to\R^3$ which is critical in $\R^3$, i.e., $h(x,\UU)=|\UU|^4\UU$, or has subcritical growth at infinity. If $h$ is radial in $\UU$ and $a=1$ below, then we show that the solutions to the problem above are in one to one correspondence with the solutions to the following Schr\"odinger equation
	\[
	-\Delta u+\frac{a}{r^2}u=f(x,u),\qquad u\colon\mathbb{R}^3\to \mathbb{R},
	\]
	where $x=(y,z)\in \mathbb{R}^2\times \mathbb{R}$, $r=|y|$, and $a\geq 0$.
	In the critical case, the multiplicity problem for the latter equation has been studied only in the autonomous case $a=0$ and the available methods seem to be insufficient for the problem involving the singular potential, i.e., $a\neq 0$, due to the lack of conformal invariance. Therefore we develop methods for the critical curl-curl problem and show  the multiplicity of bound states for both equations. In the subcritical case, instead, studying the Schr\"odinger equation in higher dimensions, we find infinitely many bound states for both problems.
\end{abstract}

\maketitle
	
	\section*{Introduction}
	\setcounter{section}{1}
	
	We look for solutions $\UU\colon\R^3\to\R^3$ to the following {\em curl-curl problem}
	\begin{equation}\label{eqV}
		\curlop\curlop\UU=h(x,\UU)\qquad\text{in }\R^3
	\end{equation}
	with a nonlinear function $h\colon\R^3\times\R^3\to\R^3$,
	which arises in the study of the  propagation of time-harmonic electromagnetic fields in a nonlinear medium by means of Maxwell's equations and the constitutive laws, see \cite{Stuart:1991,Stuart:1993,Agrawal,Doerfler,Mederski} and the references therein.
	The first existence results concerning \eqref{eqV} have been obtained  for the cylindrically symmetric media in \cite{AzzBenDApFor} and for nonsymmetric ones in \cite{Mederski} by means of variational methods for subcritical $h$ at infinity.
		
	Most of known results (see the survey \cite{MeScSurvey} and the references therein) concern the subcritical regime, and the problem in the purely critical case $h(x,\UU)=|\UU|^4\UU$ was an open question for some time. Recently, the second author and Szulkin \cite{MeSzu} obtained finally the existence of a least energy solution  to \eqref{eqV} in the critical case by developing new concentration compactness argument and variational approach for the curl-curl operator. However, the problem of the existence of multiple solutions to the critical curl-curl problem remained open and will be investigated below as one of the main aims of the work.
 
	 On the other hand, we are also interested in finding solutions $u\colon\R^N\to\R$ to the problem
	\begin{equation}\label{eqS}
		-\Delta u+\frac{a}{r^2}u= f(x,u) \qquad\textnormal{in }\R^N,
	\end{equation}
	where $x=(y,z)\in\R^K\times\R^{N-K}$, $N>K\ge 2$, $r=|y|$ is the Euclidean norm in $\R^K$, $a>a_0\in(-\infty,0]$.
	Here $f\colon\R^N\times\R\to\R$ is a nonlinear function with critical growth, i.e., $f(x,u)=|u|^{2^*-2}u$, or subcritical growth at infinity, see assumptions (F1)--(F5) below. The problem appears in the study of stationary solutions to nonlinear Schr\"odinger or Klein--Gordon equations \cite{BadBenRol}.
	
	We also study the relation between \eqref{eqV} and \eqref{eqS} as follows. Suppose that $N=3$, $K=2$, $a=1$, and $h(x,\alpha w)=f(x,\alpha)w$ for $\alpha\in\R$, $w\in \R^3$ such that $|w|=1$, and $x\in\R^3$.
	Then one can easily calculate that if $u(x)=u(r,x_3)$ with $r=|(x_1,x_2)|$ is a classical solution to \eqref{eqS}, then 
	\begin{equation}\label{eq:formulauU}
	\UU(x)=\frac{u(x)}{\sqrt{x_1^2+x_2^2}}\Bigl(\begin{smallmatrix}
	-x_2\\
	x_1\\
	0
	\end{smallmatrix}\Bigr),\quad x=(x_1,x_2,x_3)\in\R^3\setminus(\{0\}\times\{0\}\times\R),
	\end{equation}
	satisfies $\div(\UU)=0$ and $\curlop(\curlop \UU)=-\Delta\UU=h(x,\UU)$ for $x\in\R^3\setminus(\{0\}\times\{0\}\times\R)$, cf. \cite{MederskiJFA,Zeng}. Our first aim is to show that \eqref{eq:formulauU} is the key to build solutions to \eqref{eqS} from solutions to \eqref{eqV} and vice versa also in the context of nonclassical solutions, which is not immediate and will be demonstrated in Theorem \ref{ScalVec} below. In particular, it will be crucial in the Sobolev-critical regime to study \eqref{eqS} by means of \eqref{eqV}. As an additional application of Theorem \ref{ScalVec}, we will prove existence results for \eqref{eqS} in the Sobolev-noncritical regime, thus providing new results also for \eqref{eqV}.

By a {\em solution} to \eqref{eqV} we mean a critical point $\UU\in\cD^{1,2}(\R^3,\R^3)$ of $\cE$, where the functional $\cE\colon\cD^{1,2}(\R^3,\R^3)\to\R$ is defined as
\begin{equation}\label{eq:I}
	\cE(\UU)=\frac12\int_{\R^3}|\curlop\UU|^2\,dx-\int_{\R^3}H(x,\UU)\,dx,
\end{equation}
$H(x,\UU):=\int_{0}^{1}\langle h(x,t\UU),\UU\rangle\,dt$, and $\cD^{1,2}(\R^3,\R^3)$ is the completion of $\cC_0^{\infty}(\R^3,\R^3)$ with respect to the norm $|\nabla \UU|_2$ (observe that $\cE$ is of class $\cC^1$ under reasonable assumptions about $h$). Here and in the sequel, $|\cdot|_q$ denotes the $L^q$-norm for $q\in[1,\infty]$. Clearly every solution to \eqref{eqV} is a {\em weak solution} in $\R^3$, i.e., $\cE'(\UU)(\phi)=0$ for every $\phi\in\cC_0^{\infty}(\R^3,\R^3)$. Inspired by \cite{Stuart:1991,AzzBenDApFor}, we define $\DF$ as the subspace of $\cD^{1,2}(\R^3,\R^3)$ consisting of vector fields of the form \eqref{eq:formulauU}, which will be defined rigorously in Section \ref{sec:vector}. 

Observe that the kernel of $\curlop(\cdot)$ is of infinite dimension since $\curlop(\nabla \phi)=0$ for all $\phi$ of class $\cC^2$ and $\cE$ is strongly indefinite, i.e., it is unbounded from above and from below, even on subspaces of finite codimension. In order to avoid the indefiniteness we use Theorem \ref{ScalVec}; in particular, the property that $\curlop \curlop (\cdot) = -\De$ in $\DF$ (Sobolev-critical regime) or the one that $\UU\in\D_\cF$ is a solution to \eqref{eqV} if and only if $u\in X_{\cO}$ is a solution to \eqref{eqS} with $a=1$, $N=3$, and $K=2$ (Sobolev-noncritical regime), where $X_\cO$ is defined below.

Regarding \eqref{eqS}, we introduce the energy functional
\begin{equation*}%\label{eq:actionS}
\cJ(u) = \frac12 \int_{\R^N} |\nabla u|^2 + \frac{a}{r^2} |u|^2 \, dx - \int_{\R^N} F(x,u) \, dx,
\end{equation*}
which, under usual assumptions about $f$, is of class $\cC^1$ in  $X \subset \D^{1,2}(\R^N)$, where $F(x,u) := \int_0^u f(x,t) \, dt$, $\D^{1,2}(\R^N)$ is the completion of $\cC_0^{\infty}(\R^N)$ with respect to the usual norm $|\nabla u|_2$, and 
$$
X:=\Big\{u\in\cD^{1,2}(\R^N):\int_{\R^N}\frac{|u|^2}{r^2}\,dx<\infty\Big\},
$$
endowed with the norm $\|u\|=(|\nabla u|_2^2+|u/r|_2^2)^{1/2}$. Due to the singular term, we consider the group $\cO := \cO(K) \times \{I_{N-K}\}\subset \cO(N)$, which acts isometrically on $X$, and let $X_{\cO}$ denote the subspace of $X$ consisting of invariant functions with respect to $\cO$. Here and in the sequel, $\cO(d)$ is the orthogonal group in $\R^d$. Note that this is equivalent to requiring that the functions be invariant with respect to $\SO(K)\times\{I_{N-K}\}$, since for every $x,y\in\mbS^{d-1}$, $d\geq 2$, there exists $g\in\SO(d)$ such that $gy=x$, where $\SO(K) \subset \cO(K)$ stands for the special orthogonal group in $\R^K$.

If $K>2$, then $$\irn\frac{|u|^2}{r^2}\,dx\le\biggl(\frac{2}{K-2}\biggr)^2\irn|\nabla u|^2\,dx$$ for every $u\in\cD^{1,2}(\rn)$ (see \cite{BadTar}) and $X$ and $\cD^{1,2}(\R^N)$ coincide. Note also that
$$\cJ'(u)(\phi)=\int_{\R^N}\langle \nabla u,\nabla \phi\rangle+\frac{a}{r^2}u\phi\,dx - \int_{\R^N} f(x,u)\phi\,dx$$
need not be finite for $K=2$, $u\in X$, $\phi\in\cC_0^{\infty}(\R^N)$. Therefore, by
a {\em solution}  to \eqref{eqS} we mean a critical point $u\in X$ of $\cJ$.
By \textit{ground state solution} to \eqref{eqS} in $X_{\cO}$ we mean a nontrivial solution to \eqref{eqS} that minimizes $\{\cJ(u):u\in X_{\cO}\setminus\{0\}$, $u$ is a solution to \eqref{eqS}$\}$. If $K>2$, then $\cC_0^{\infty}(\R^N)$ is dense in $X$ and, by the Palais principle of symmetric criticality \cite{Palais}, critical points of $\cJ|_{X_{\cO}}$ correspond to critical points of $\cJ$ and are {\em weak solutions in $\R^N$}, i.e., $\cJ'(u)(\phi)=0$ for every $\phi\in\cC_0^{\infty}(\R^N)$. If $K=2$, then in view of \cite[Proposition 5]{BadBenRol} any nonnegative solution  to \eqref{eqS} is a weak solution in $\R^N$.

We begin by investigating the problems with the critical nonlinearities when $N=3$:
$$
h(x,\UU) = |\UU|^{4} \UU \hbox{ and } f(x,u)=|u|^{4}u \quad \hbox{ for } u \in \R, \UU \in \R^3.
$$
The existence of a solution to \eqref{eqV} that minimizes $\cE$ among \textit{all} the nontrivial solutions (not only those of the form \eqref{eq:formulauU}) has been recently obtained in \cite{MeSzu}. In order to  investigate  multiple solutions, inspired by \cite{Ding,ClPis}, we introduce the following $\SO(2)\times\SO(2)$-group action on $\cD^{1,2}(\R^3,\R^3)$.

\begin{Def}\label{def:sym}
	For $g_1,g_2\in\SO(2)$ we denote $g=\bigl(\begin{smallmatrix}
		g_1 & 0\\
		0 & g_2
	\end{smallmatrix}\bigr)\equiv(g_1,g_2)\in\SO(2)\times\SO(2)$. We say that $\UU\in\cD^{1,2}(\R^3,\R^3)$ is $\SO(2)\times\SO(2)$-symmetric if and only if
	\[
	\frac{\UU\Bigl(\pi\bigl(g\pi^{-1}(x)\bigr)\Bigr)}{\vphi\Bigl(\pi\bigl(g\pi^{-1}(x)\bigr)\Bigr)}=\frac{\widetilde{g_1}\UU(x)}{\vphi(x)}
	\]
	for every $g_1,g_2\in\SO(2)$ and a.e. $x\in\R^3$, where $\vp(x)=\sqrt{\frac{2}{1+|x|^2}}$, $\pi\colon\mathbb{S}^3\to\R^3\cup\{\infty\}$ is the stereographic projection of the unit sphere $\mathbb{S}^3 \subset \mathbb{R}^4$, and $\wt g_1=\bigl(\begin{smallmatrix}
		g_1 & 0\\
		0 & 1
	\end{smallmatrix}\bigr)$.
\end{Def}
The subspace of $\SO(2)\times\SO(2)$-symmetric vector fields is denoted by
$\cD_{\SO(2)\times\SO(2)}$ and
the main result concerning \eqref{eqV} in the critical case reads as follows.

\begin{Th}\label{critV}
	There exists a sequence $(\UU_n)\subset\cD_{\SO(2)\times\SO(2)}$ of solutions to
	\begin{equation}\label{eq:crV}
		\curlop\curlop\UU=|\UU|^{4}\UU\qquad\text{in }\R^3
	\end{equation}
	such that $\cE(\UU_n)\to\infty$ as $n\to\infty$. Each $\UU_n$ is of the form \eqref{eq:formulauU}. 
\end{Th}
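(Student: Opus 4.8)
\emph{Sketch of the argument.} The plan is to transport \eqref{eq:crV} into the scalar equation \eqref{eqS} through Theorem \ref{ScalVec}, to exploit a conformal invariance hidden behind the curl-curl reformulation in order to overcome the loss of compactness afflicting the Sobolev-critical problem, and finally to run a $\Z_{2}$-symmetric minimax scheme. By Theorem \ref{ScalVec}, the correspondence \eqref{eq:formulauU} is a bijection between solutions $\UU\in\DF$ of \eqref{eq:crV} and solutions $u\in X_{\cO}$ of \eqref{eqS} with $N=3$, $K=2$, $a=1$, $f(x,u)=|u|^{4}u$. Using $\curlop\curlop=-\De$ on $\DF$, the pointwise identity expressing $-\De\UU$ through \eqref{eq:formulauU} with $u$ replaced by $-\De u+\tfrac1{r^{2}}u$, and the equality $|\UU|=|u|$, one obtains $\cE(\UU)=\cJ(u)$ and checks that $\UU\in\cD_{\SO(2)\times\SO(2)}$ if and only if $u$ lies in a corresponding subspace $Y\subset X_{\cO}$ of functions carrying the extra symmetry encoded in Definition \ref{def:sym}. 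It thus suffices to produce an unbounded sequence of critical levels of the functional restricted to $\DF\cap\cD_{\SO(2)\times\SO(2)}$ that is realised by solutions of \eqref{eq:crV}.

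Although \eqref{eqS} is not conformally invariant when $a\neq0$, the curl-curl energy \eqref{eq:I} is conformally invariant on $\DF$: viewing $\UU$ as a $1$-form of conformal weight $1/2$, both $\int_{\R^{3}}|\curlop\UU|^{2}\,dx$ and $\int_{\R^{3}}|\UU|^{6}\,dx$ are invariant under pull-back by conformal diffeomorphisms of $\R^{3}\cup\{\infty\}$. Hence $\cE$ is invariant under the $\SO(2)\times\SO(2)$-action of Definition \ref{def:sym}, which, under the stereographic identification $\R^{3}\cup\{\infty\}\cong\mbS^{3}\subset\R^{2}\times\R^{2}$ (the factor $\vp$ accounting for the conformal transformation of densities), is the standard torus $\SO(2)\times\SO(2)\subset\SO(4)$ rotating each $\R^{2}$-factor. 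The decisive feature is that this torus has no fixed point on $\mbS^{3}$ --- every orbit is a circle or a two-torus --- so the orbit of each point of $\R^{3}\cup\{\infty\}$ under the induced conformal action is positive-dimensional.

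Consequently, symmetric Palais--Smale sequences for $\cE$ on $\DF\cap\cD_{\SO(2)\times\SO(2)}$ cannot concentrate: a concentration point $x_{0}$ would, by symmetry, be accompanied by a whole positive-dimensional family of mutually asymptotically orthogonal bubbles along the orbit of $x_{0}$, which is incompatible with a bounded energy. Carrying this out --- a symmetric refinement of the concentration-compactness analysis of \cite{MeSzu} --- one finds that $\cE$ restricted to $\DF\cap\cD_{\SO(2)\times\SO(2)}$ is even, of class $\cC^{1}$, has a mountain-pass geometry, is unbounded from below, and satisfies the Palais--Smale condition at every positive level; the same compactness gives the fountain geometry along an increasing chain of finite-dimensional subspaces. (Equivalently, one may transfer the problem to $\mbS^{3}$, where $\SO(2)\times\SO(2)$-invariant fields of the form \eqref{eq:formulauU} reduce it to a one-dimensional weighted variational problem on $(0,\pi/2)$ whose natural energy space embeds compactly into the relevant $L^{6}$-space.) The $\Z_{2}$-symmetric mountain-pass (fountain) theorem then produces critical levels $c_{n}\to\infty$ realised by critical points $\UU_{n}\in\DF\cap\cD_{\SO(2)\times\SO(2)}$, and, by the Palais principle of symmetric criticality together with Theorem \ref{ScalVec}, each $\UU_{n}$ solves \eqref{eq:crV}, is of the form \eqref{eq:formulauU}, and satisfies $\cE(\UU_{n})=c_{n}\to\infty$.

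The heart of the matter is the compactness step. On the one hand, one must make the conformal invariance of \eqref{eq:I} on $\DF$ rigorous, which requires handling the infinite-dimensional kernel of $\curlop$ by working with divergence-free, i.e.\ co-exact, representatives (as is built into $\DF$), and recognising that the singular term $a/r^{2}$ in \eqref{eqS} is exactly the mass picked up by a $1$-form pointing in the $\tfrac1r(-x_{2},x_{1},0)$-direction. On the other hand, the symmetric concentration-compactness argument excluding bubbling along positive-dimensional orbits has to be made precise for the curl-curl functional --- equivalently, the degeneracy of the weights of the reduced one-dimensional problem at the endpoint $t=\pi/2$, the very endpoint responsible for the failure of conformal invariance of \eqref{eqS}, has to be controlled. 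Once these points are settled, checking the minimax geometry and passing back through Theorem \ref{ScalVec} are routine.
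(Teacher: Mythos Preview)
Your overall strategy---work on $\cY:=\DF\cap\cD_{\SO(2)\times\SO(2)}$, exploit the conformal $\SO(2)\times\SO(2)$-action via stereographic projection to $\mbS^3$, recover compactness, and apply a $\Z_2$-symmetric minimax scheme together with Palais's principle---matches the paper's. Two points, however, deserve comment.

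First, the compactness step is considerably simpler than you suggest. The paper does not carry out a symmetric concentration-compactness analysis \`a la \cite{MeSzu}, nor a reduction to a one-dimensional weighted problem on $(0,\pi/2)$. Instead (Lemma~\ref{compact}) it transfers $\UU$ to $\VV=\frac{\UU\circ\pi}{\vphi\circ\pi}\in H^1(\mbS^3,\R^3)$ via the linear isometry you allude to, observes that the $\SO(2)\times\SO(2)$-symmetry of $\UU$ forces $|\VV|$ to be $\cO(2)\times\cO(2)$-invariant on $\mbS^3$, and then simply quotes Ding's scalar compactness result \cite[Lemma~5]{Ding} for $|\VV|$ to conclude that the embedding $\cD_{\SO(2)\times\SO(2)}\hookrightarrow L^6(\R^3,\R^3)$ is compact. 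This immediately gives the Palais--Smale condition at every positive level, with no bubbling analysis needed. Your concentration-compactness heuristic (``positive-dimensional orbits forbid concentration'') is morally the same mechanism, but the paper's route is a one-line reduction to a known scalar fact.

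Second, you do not address the one genuinely nontrivial structural point: that $\cY$ is infinite dimensional. This is not automatic---the $\SO(2)\times\SO(2)$-symmetry of Definition~\ref{def:sym} is equivariant rather than invariant, and one must check it is compatible with the $\cF$-ansatz. The paper devotes Lemmas~\ref{lem:decom} and~\ref{lem:inf} to this: first showing that the projections $\UU_\rho,\UU_\tau,\UU_\zeta$ preserve the $\SO(2)\times\SO(2)$-symmetry (a direct but nontrivial computation), then building explicit isomorphisms between the $\rho$-, $\tau$-, and $\zeta$-pieces to conclude $\dim\cY=\infty$. Without this, the symmetric mountain-pass theorem \cite[Theorem~9.12]{Rabin} cannot be invoked. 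Your sketch implicitly assumes this when speaking of ``an increasing chain of finite-dimensional subspaces'', but it is precisely here---not in the compactness step---that the paper does the real work.

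Finally, a minor framing remark: the paper does not pass through Theorem~\ref{ScalVec} to prove Theorem~\ref{critV}; it works directly with $\cE|_\cY$ and uses the $\SO$-equivariance plus the $\cS$-action from \cite{AzzBenDApFor} (recalled in the proof of Theorem~\ref{ScalVec}) to apply Palais's principle. Theorem~\ref{ScalVec} is then used in the reverse direction, to deduce Corollary~\ref{critS} for the scalar problem from Theorem~\ref{critV}.
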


Since each energy level $\cE(\UU_n)$ has a minimax characterization, $\UU_n$ can be considered as a bound state solution. In order to overcome the difficulties owing to the indefiniteness mentioned above, we consider vector fields $\UU$ of the form \eqref{eq:formulauU} and we exploit the property that such vector fields are divergence-free in the distributional sense (cf. Lemma \ref{DivFree}). Similarly as in \cite{AzzBenDApFor}, this allows us to reduce the curl-curl operator to the vector Laplacian by means of the appropriate group action and the Palais principle of symmetric criticality, see Section \ref{sec:vector} for details. Of course, we need to work with vector fields of the form \eqref{eq:formulauU} that are additionally $\SO(2)\times\SO(2)$-symmetric (cf. Lemma \ref{lem:inf}). We show that such symmetric vector fields are compactly embedded into $L^6(\R^3,\R^3)$. We would like to emphasize that using simply the $\Gamma$-invariance from \cite{ClPis,Ding} (recalled explicitly  below) for each component of $\UU$ does not allow us to reduce the problem to divergence-free vector fields and it is not clear whether we can obtain solutions in such a setting. However, our equivariant symmetry introduced in Definition \ref{def:sym} allows us to finally find infinitely many solutions to \eqref{eq:crV}.

Turning to \eqref{eqS}, we point out that in \cite{BadGuiRol}, Badiale, Guida, and Rolando found a ground state solution in $X_{\cO}$ for $a>0$. 
%By an inspection of their proof, the same result holds for $a>-\bigl(\frac{K-2}{2}\bigr)^2$.
An immediate consequence of Theorem \ref{ScalVec} is the existence of a solution to \eqref{eqV} of the form \eqref{eq:formulauU} that minimizes the energy functional $\cE$ among all the nontrivial solutions of the same form, although it is not clear whether it is a least-energy solution in the general sense specified above. Multiplicity results for \eqref{eqS} in the critical case, nonetheless, are not known up to our knowledge unless $a=0$.
Observe that, if $a=0$, then the most prominent result is due to Ding \cite{Ding}, which establishes the existence of infinitely many sign-changing solutions $(u_n)$ invariant under the conformal action of $\Gamma:=\cO(2)\times\cO(2)$ on $\R^3$, shortly {\em $\Gamma$-invariant}, induced by the stereographic projection, i.e., for each $\gamma\in\Gamma$ we set $\wt\gamma(x):=(\pi\circ\gamma^{-1}\circ\pi^{-1})(x)$ and $\gamma u(x):=|\hbox{det} \wt\gamma'(x)|^{1/6}u(\wt \gamma(x))$ for all $\gamma\in\Gamma$ and a.e. $x\in\R^N$. Such a group action restores compactness, e.g. similarly as in Clapp and Pistoia \cite{ClPis}, i.e., the subspace of $\Gamma$-invariant functions in $\cD^{1,2}(\R^3)$ is compactly embedded into $L^{6}(\R^3)$. However, if $a\neq 0$, then one easily checks that $\cJ$ is not $\Gamma$-invariant due to the term $u/r^2$ and the lack of the conformal invariance, hence this approach no longer applies. Therefore, in order to solve \eqref{eqS} in $N=3$ and with $a=1$, we consider the infinitely many solutions to \eqref{eqV}, which does not involve a singular term, given by Theorem \ref{critV} involving the symmetry  from Definition \ref{def:sym} and use the relation \eqref{eq:formulauU} by means of Theorem \ref{ScalVec}, obtaining the following result.

\begin{Cor}\label{critS}
	There exists a sequence $(u_n)\subset X_{\cO}$ of solutions to
	\[
	-\Delta u+\frac{u}{r^2}=|u|^{4}u \qquad\text{in }\R^3
	\]
	such that $\cJ(u_n)\to\infty$ as $n\to\infty$. Moreover, each $|u_n|$ is $\Gamma$-invariant.
\end{Cor}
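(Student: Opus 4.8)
The plan is to combine Theorem \ref{critV} with the correspondence established in Theorem \ref{ScalVec} in the specific setting $N=3$, $K=2$, $a=1$, where $f(x,u)=|u|^{4}u$ and correspondingly $h(x,\UU)=|\UU|^{4}\UU$ (so that $h(x,\alpha w)=|\alpha|^4\alpha\, w=f(x,\alpha)w$ for $|w|=1$, as required). First I would invoke Theorem \ref{critV} to obtain the sequence $(\UU_n)\subset\cD_{\SO(2)\times\SO(2)}\subset\DF$ of solutions to \eqref{eq:crV} with $\cE(\UU_n)\to\infty$, each of the form \eqref{eq:formulauU}. By definition of $\DF$, for each $n$ there is a function $u_n=u_n(r,x_3)$ such that $\UU_n$ is given by formula \eqref{eq:formulauU} with $u=u_n$; the fact that $\UU_n\in\DF\subset\cD^{1,2}(\R^3,\R^3)$ together with the rigorous setup of Section \ref{sec:vector} guarantees $u_n\in X_\cO$ (the $\cO=\cO(2)\times\{I_1\}$-invariance of $u_n$ is exactly the axial symmetry $u_n=u_n(r,x_3)$). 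Then Theorem \ref{ScalVec} — applied in the direction "$\UU\in\DF$ solves \eqref{eqV} $\iff$ $u\in X_\cO$ solves \eqref{eqS} with $a=1$, $N=3$, $K=2$" — yields that $u_n$ is a solution to $-\Delta u+u/r^2=|u|^4u$ in $\R^3$.

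Next I would transfer the energy divergence. The relation \eqref{eq:formulauU} is, in the critical case, an isometry (up to the constant coming from $|\curlop\UU|^2=|\nabla\UU|^2$ for divergence-free fields, cf. Lemma \ref{DivFree}, and the pointwise identity $|\UU(x)|=|u(x)|/\,r\cdot r = $ the appropriate scalar profile) between the relevant function spaces, and it intertwines $\cE$ with $\cJ$: concretely $\cE(\UU_n)=\cJ(u_n)$, possibly up to a fixed positive multiplicative constant depending only on the normalization in \eqref{eq:formulauU}. This identity should already be recorded as part of Theorem \ref{ScalVec} (it is implicit in "one to one correspondence"); granting it, $\cJ(u_n)=\cE(\UU_n)\to\infty$ follows at once. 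In particular the $u_n$ are pairwise distinct and nontrivial.

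Finally, for the $\Gamma$-invariance of $|u_n|$: here one uses that the $\SO(2)\times\SO(2)$-symmetry of $\UU_n$ in the sense of Definition \ref{def:sym} is precisely designed so that, after dividing by the factor $\sqrt{x_1^2+x_2^2}$ and the rotating vector $(-x_2,x_1,0)$, the scalar profile $u_n$ acquires the conformal equivariance of $\Gamma=\cO(2)\times\cO(2)$ up to sign: the Jacobian weight $|\det\wt\gamma'(x)|^{1/6}$ in $\gamma u$ matches the weight $\vphi\bigl(\pi(g\pi^{-1}(x))\bigr)/\vphi(x)$ appearing in Definition \ref{def:sym}, because $\vphi(x)=\sqrt{2/(1+|x|^2)}$ is (a power of) the conformal factor of the stereographic projection. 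Thus $u_n(\wt\gamma(x))=\pm|\det\wt\gamma'(x)|^{-1/6}\cdot(\text{weight})\cdot u_n(x)$ for $\gamma\in\Gamma$, and taking absolute values kills the sign, giving $\gamma|u_n|=|u_n|$, i.e. $|u_n|$ is $\Gamma$-invariant. I expect this last step — carefully matching the equivariance in Definition \ref{def:sym} with the stereographic/conformal action of $\Gamma$ and checking the signs — to be the main obstacle, since it requires unwinding how the vector-field symmetry descends to the scalar profile; the rest is a direct application of Theorems \ref{critV} and \ref{ScalVec}.
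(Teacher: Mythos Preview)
Your proposal is correct and matches the paper's one-line proof (``follows from Theorems \ref{critV} and \ref{ScalVec}''). Your hedges are unnecessary: the identity $\cJ(u_n)=\cE(\UU_n)$ is stated verbatim in Theorem \ref{ScalVec} (no multiplicative constant), and for the $\Gamma$-invariance of $|u_n|$ one may simply use $|u_n|=|\UU_n|$ together with the observation in the proof of Lemma \ref{compact} that $|\VV_n|=|\UU_n\circ\pi|/(\vphi\circ\pi)$ is $\cO(2)\times\cO(2)$-invariant on $\mbS^3$---no sign ambiguity arises.
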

It remains an open question whether there are infinitely many solutions to \eqref{eqS} in the critical case with $a\neq 0,1$ or $(N,K) \ne (3,2)$; in particular, concerning the latter, the reason is that $\SO(K)$ is not abelian if $K\ge3$, a property our approach heavily relies on.

If $N>K>2$, then we would like to mention the existence of a ground state solution for $-\bigl(\frac{K-2}{2}\bigr)^2<a<0$ and the nonexistence of ground states solution for $a>0$, see \cite[Theorem 1.2]{ChabrSzW}.

Concerning the Sobolev-noncritical regime, in order to enhance the applications of Theorem \ref{ScalVec} we work directly with \eqref{eqS} and collect some assumptions about $f$.
\begin{itemize}
	\item [(F1)] $f\colon\R^N\times\R\to\R$ is a Carath\'eodory function (i.e., measurable in $x\in\R^N$ for every $u\in\R$, continuous in $u\in\R$ for a.e. $x\in\R^N$). We assume that $f$ is $\cO$-invariant with respect to $x$, i.e., $f(gx,u)=f(x,u)$ for $g\in\cO$, for a.e. $x\in\R^N$, and for every $u\in\R$. Moreover, $f$ is $\Z^{N-K}$-periodic in the last $N-K$ components of $x$, i.e., $f(x,u)=f\bigl(x+(0,z'),u\bigr)$ for every $u\in\R$, a.e. $x\in\R^N$, and a.e. $z'\in\Z^{N-K}$. 
	\item [(F2)] $\displaystyle\lim_{|u|\to 0}\frac{f(x,u)}{|u|^{2^*-1}}=\lim_{|u|\to\infty}\frac{f(x,u)}{|u|^{2^*-1}}=0$ uniformly with respect to $x\in\R^{N}$, where $2^*=\frac{2N}{N-2}$.
	\item [(F3)] $\displaystyle\lim_{|u|\to\infty}\frac{F(x,u)}{|u|^2}=\infty$ uniformly with respect to $x\in\R^{N}$.
	\item [(F4)] $\displaystyle u\mapsto\frac{f(x,u)}{|u|}$ is nondecreasing on $(-\infty,0)$ and on $(0,\infty)$ for a.e. $x\in\R^{N}$.
\end{itemize}

The first existence result in the Soblev-noncritical regime reads as follows.
	\begin{Th}\label{ExMul}
		Suppose that $a>-\bigl(\frac{K-2}{2}\bigr)^2$ and (F1)--(F4) hold. Then
		there exists a ground state solution $u$ to \eqref{eqS} in $X_{\cO}$. If, in addition, $f$ is odd in $u\in\R$, then $u$ is nonnegative and	
		 \eqref{eqS} has infinitely many geometrically $\Z^{N-K}$-distinct solutions in $X_{\cO}$.
	\end{Th}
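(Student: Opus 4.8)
The plan is to study \eqref{eqS} variationally on $X_{\cO}$ via the Nehari manifold, the essential point being a compactness statement modulo translations in the $z$-variable, and then to run a $\Z_2$-equivariant minimax for the multiplicity part. First I would record the functional setting. Since $a>-\bigl(\tfrac{K-2}{2}\bigr)^2$, the Hardy inequality of \cite{BadTar} (if $K>2$) resp. the positivity of $a$ (if $K=2$) shows that $Q(u):=\int_{\R^N}|\nabla u|^2+\tfrac{a}{r^2}|u|^2\,dx$ is an equivalent norm squared on $X$, hence on $X_{\cO}$, and $\cJ(u)=\tfrac12 Q(u)-\int_{\R^N}F(x,u)\,dx$. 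Using (F1)--(F2) together with the continuous embedding of $X_{\cO}$ into $L^p(\R^N)$ for a suitable range of exponents $p\le 2^*$ (see \cite{BadTar,BadGuiRol}), one checks $\cJ\in\cC^1(X_{\cO},\R)$. Condition (F4) then guarantees that, for each $u\in X_{\cO}\setminus\{0\}$, the fibering map $t\mapsto\cJ(tu)$ has a unique positive critical point $t_u$, which is its strict maximum; (F2) near $0$ gives the mountain-pass geometry and (F3) makes $\cJ(tu)\to-\infty$ as $t\to\infty$. Consequently the Nehari manifold $\cN:=\{u\in X_{\cO}\setminus\{0\}:\cJ'(u)(u)=0\}$ is a $\cC^1$-submanifold, $u\mapsto t_u u$ maps the unit sphere $\{u\in X_{\cO}:\|u\|=1\}$ homeomorphically onto $\cN$, and $c:=\inf_{\cN}\cJ=\inf_{u\ne 0}\max_{t>0}\cJ(tu)>0$. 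By (F3) and the Nehari identity every Palais--Smale sequence for $\cJ$ (in particular every minimizing sequence on $\cN$, produced by Ekeland's principle on the manifold) is bounded in $X_{\cO}$.

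The core of the argument is the following compactness assertion: every bounded Palais--Smale sequence $(u_n)\subset X_{\cO}$ for $\cJ$ either has a subsequence converging to $0$ or, after translating by suitable $k_n\in\Z^{N-K}$, satisfies $\tu_n:=u_n(\cdot+(0,k_n))\weakto u\ne 0$ in $X_{\cO}$ with $\cJ'(u)=0$. The $\cO$-invariance together with the Hardy term confines the possible escape of mass to the $z$-directions: dilation- and concentration-type profiles (at the singularity $r=0$ and away from it) carry asymptotically vanishing nonlinear energy because of the two-sided vanishing in (F2), hence cannot occur at a finite Palais--Smale level, while the $\Z^{N-K}$-periodicity of $f$ in (F1) makes $\cJ$ invariant under $u\mapsto u(\cdot+(0,k))$, $k\in\Z^{N-K}$; a Lions-type vanishing lemma rules out the last remaining possibility. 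Applying this to a minimizing Palais--Smale sequence on $\cN$ and combining the weak lower semicontinuity of $Q$ with a Brezis--Lieb-type splitting of $\int F(x,\cdot)$ (valid under (F2) and (F4)) gives $\cJ(u)\le c$; since $u\in\cN$ this forces $\cJ(u)=c$, so $u$ is a ground state of $\cJ|_{X_{\cO}}$. By the principle of symmetric criticality \cite{Palais} when $K>2$ (where $\cC_0^{\infty}(\R^N)$ is dense in $X$), resp. by \cite[Proposition 5]{BadBenRol} when $K=2$, $u$ is a weak solution of \eqref{eqS}. If $f$ is odd, then $F$ is even, $|u|\in\cN$ with $\cJ(|u|)=\cJ(u)=c$, so the ground state may be taken nonnegative.

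For the multiplicity, assume $f$ odd, so that $\cJ$ is even and $\cN$ is symmetric. I would invoke an abstract critical point theorem for even $\cC^1$-functionals on a Nehari manifold carrying a fixed-point-free isometric action of $\Z^{N-K}$ along which the functional is invariant and Palais--Smale sequences are compact, in the spirit of the Nehari manifold method for periodic problems (cf. \cite{Mederski}); its hypotheses are exactly what the previous paragraph supplies. Concretely, one considers minimax values $c_j$ over symmetric subsets of $\cN$ of Krasnoselskii genus at least $j$, shows that they are finite critical values of $\cJ|_{\cN}$ with $c=c_1\le c_2\le\cdots$, and that $c_j\to\infty$ (using that $X_{\cO}$ is infinite-dimensional and, via (F3), that $\cJ$ is large on high-genus subsets of $\cN$). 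If \eqref{eqS} had only finitely many geometrically $\Z^{N-K}$-distinct solutions in $X_{\cO}$, then the compactness modulo $\Z^{N-K}$ would make the critical set of $\cJ|_{\cN}$ a finite union of $\Z^{N-K}$-orbits of compact sets, and a deformation lemma compatible with the $\Z_2$-symmetry and with the translations would produce a uniform bound for the $c_j$'s, contradicting $c_j\to\infty$. Hence \eqref{eqS} admits infinitely many geometrically $\Z^{N-K}$-distinct solutions in $X_{\cO}$, along which $\cJ$ is unbounded (and, when $K>2$, these are weak solutions of \eqref{eqS} by \cite{Palais}).

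The delicate points, which I expect to be the main obstacle, are concentrated in the compactness analysis and in the last bookkeeping. Converting (F2) --- a condition on the \emph{growth} of $f$ that merely forces the quotient $f(x,u)/|u|^{2^*-1}$ to vanish at $0$ and at $\infty$ --- into the statement that no energy leaks out through Sobolev-critical bubbles requires the Brezis--Lieb splitting together with the nonexistence of bubbles of positive energy; and the equivariant minimax must be organised so that the divergence $c_j\to\infty$ genuinely yields \emph{geometrically distinct} critical points, despite the non-compact $\Z^{N-K}$-action, rather than mere translates of finitely many solutions. Everything else is by now routine for Nehari-manifold arguments.
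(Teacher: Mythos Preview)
Your overall strategy --- Nehari constraint, Lions-type concentration--compactness modulo $\Z^{N-K}$-translations, and an equivariant minimax for multiplicity --- is the same as the paper's. However, two steps are stated too quickly and do not go through under the hypotheses as written.

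First, you assert that (F4) makes the fibering map $t\mapsto\cJ(tu)$ have a \emph{unique} positive critical point and hence that $\cN$ is a $\cC^1$-submanifold on which Ekeland's principle can be run. But (F4) only requires $u\mapsto f(x,u)/|u|$ to be \emph{nondecreasing}, not strictly increasing, so the fibering map may have a whole interval of maxima and $\cN$ need not be of class $\cC^1$ (the paper notes this explicitly). Second, your claim that ``by (F3) and the Nehari identity every Palais--Smale sequence for $\cJ$ is bounded'' is not justified: from (F4) one only gets $\cJ(u_n)-\tfrac12\cJ'(u_n)u_n=\int_{\R^N}\bigl(\tfrac12 f(x,u_n)u_n-F(x,u_n)\bigr)\,dx\ge 0$, which is not coercive. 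Boundedness actually requires the normalized-sequence argument combined with the Lions-type lemma for $\cO$-invariant functions (Lemma~\ref{Lions}): if $\|u_n\|\to\infty$ one rescales to $\bar u_n=u_n/\|u_n\|$, applies the dichotomy vanishing/nonvanishing in the $z$-direction, and derives a contradiction from (F3) via Fatou in the nonvanishing case and from Lemma~\ref{lem:I78}(b) in the vanishing case. This is the content of Lemma~\ref{lem:Mbeta}(a).

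The paper resolves both issues simultaneously by invoking the abstract critical-point theory of \cite[Section~3]{MeScSz}, which is designed precisely for Nehari-type problems where $\cN$ may fail to be $\cC^1$: one verifies conditions (I1)--(I8) (Lemma~\ref{lem:I78}), the discreteness condition $(M)_0^\beta$ (Lemma~\ref{lem:Mbeta}), and the group condition (G); then \cite[Theorem~3.3]{MeScSz} produces a bounded Cerami sequence at level $c=\inf_{\cN}\cJ$, and \cite[Theorem~3.5(b)]{MeScSz} yields infinitely many $\Z^{N-K}$-distinct critical points when $f$ is odd. The attainment $\cJ(u)=c$ is obtained via Fatou on $\tfrac12 f(x,u)u-F(x,u)\ge 0$ rather than a Brezis--Lieb splitting, and nonnegativity of the ground state comes from replacing the Cerami sequence $(u_n)$ by $(|u_n|)$ before passing to the limit. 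Your compactness paragraph and your multiplicity sketch are morally correct, but to make them rigorous under (F1)--(F4) you should either strengthen (F4) to strict monotonicity or route the argument through the framework of \cite{MeScSz} as the paper does.
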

	
	 Two solutions are called {\em geometrically $\Z^{N-K}$-distinct} if and only if one cannot be obtained via a translation of the other in the last $N-K$ variables by a vector in $\Z^{N-K}$.

	The growth conditions (F1)--(F4) are provided in \cite[Section 7]{MeScSz} for the problem \eqref{eqS} with $a=0$, $\Z^N$-periodic $f$, $\cO=\{I_N\}$ and Theorem \ref{ExMul} is  known in this particular case; see \cite[Theorem 7.1]{MeScSz}. These assumptions imply that $f(x,u)u\geq 2F(x,u)\geq 0$. However, if $F$ does not depend on $y$, we may consider also sign-changing nonlinearities under the following weaker variant of the Ambrosetti--Rabinowitz condition \cite{AmbRab}:
	\begin{itemize}
		\item[(F5)]  There exists $\gamma>2$ such that $f(z,u)u\geq\gamma F(z,u)$ for every $u\in\R$ and a.e. $z\in\R^{N-K}$ and there exists $u_0\in\R$ such that $\mathrm{essinf}_{z\in\R^{N-K}}F(z,u_0)>0$.
	\end{itemize}

	\begin{Th}\label{ExMul2}
		Suppose that $a>-\bigl(\frac{K-2}{2}\bigr)^2$, (F1)--(F2) and (F5) hold, and $f$ does not depend on $y$. Then
		there exists a nontrivial solution $u$ to \eqref{eqS} in $X_{\cO}$. 
	\end{Th}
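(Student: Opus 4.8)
The plan is to realise $u$ as a mountain pass critical point of $\cJ$ on $X_\cO$; the delicate point is that $\cJ$ admits no global Palais--Smale condition, since the nonlinearity may have critical growth at infinity and is only $\Z^{N-K}$-periodic, not symmetric, in $z$, so compactness must be restored by a concentration-compactness argument tailored to the action of $\cO$. First I would fix the framework. As $a>-\bigl(\frac{K-2}{2}\bigr)^2$, Hardy's inequality \cite{BadTar} makes $Q(u):=\irn|\nabla u|^2+\frac a{r^2}|u|^2\,dx$ positive definite and equivalent to $\|\cdot\|^2$ on $X$, so $\|u\|_a:=Q(u)^{1/2}$ defines an equivalent norm on $X_\cO$; and (F1)--(F2) yield $|f(x,u)|\le C|u|^{2^*-1}$, hence $|F(x,u)|\le C|u|^{2^*}$, so that $\cJ(u)=\frac12\|u\|_a^2-\irn F(x,u)\,dx$ is of class $\cC^1$ on $X_\cO$ (and on $X$). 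Next I would read off from (F5) the superlinear structure: since $t\mapsto t^{-\gamma}F(z,tu)$ is nondecreasing on $(0,\infty)$ --- its derivative equals $t^{-\gamma-1}\bigl(f(z,tu)(tu)-\gamma F(z,tu)\bigr)\ge0$ --- and $F(z,tu)=o(t^{2^*})$ as $t\to0^+$ by (F2), one gets $F(x,u)\ge0$ and thus $f(z,u)u\ge\gamma F(z,u)\ge0$ for a.e.\ $z$ and all $u$, and moreover $F(z,tu_0)\ge t^\gamma F(z,u_0)\ge c_0t^\gamma$ for $t\ge1$, where $c_0:=\mathrm{essinf}_zF(z,u_0)>0$ and $u_0\ne0$.

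The mountain pass geometry is then immediate: $|F(x,u)|\le C|u|^{2^*}$ and $2^*>2$ give $\inf\{\cJ(u):\|u\|_a=\rho\}>0$ for small $\rho$; and choosing $\psi\in X_\cO$ with $0\le\psi\le|u_0|$, compact support inside $\rn\setminus(\{0\}\times\R^{N-K})$, and $\psi\equiv u_0$ on a set $A$ of positive measure (a product of Lipschitz bumps in $|y|$ and in $z$ does the job), the bounds $F\ge0$ and $F(z,t\psi)\ge c_0t^\gamma$ on $A$ give $\cJ(t\psi)\le\frac{t^2}{2}\|\psi\|_a^2-c_0|A|\,t^\gamma\to-\infty$ as $t\to\infty$. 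A minimax argument then produces a Palais--Smale sequence $(u_n)\subset X_\cO$ at the mountain pass level $c>0$, and (F5) forces boundedness in the usual way: $(\tfrac12-\tfrac1\gamma)\|u_n\|_a^2\le\cJ(u_n)-\tfrac1\gamma\cJ'(u_n)u_n=c+o(1)+o(\|u_n\|_a)$. Passing to a subsequence, $u_n\rh u$ in $X_\cO$.

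The core of the argument is that $(u_n)$ does not vanish, i.e.\ $\limsup_n\sup_{\xi\in\rn}\int_{B(\xi,1)}|u_n|^2\,dx>0$; here the routine reduction fails because (F2) does not dominate $f$ by any subcritical power. Instead, from $\cJ(u_n)\to c>0$, $\cJ'(u_n)u_n\to0$ and $F\ge0$ one first gets $\liminf_n\|u_n\|_a>0$ and hence $\liminf_n\irn f(x,u_n)u_n\,dx>0$; splitting this integral over $\{|u_n|\le\delta\}$, $\{\delta<|u_n|<M\}$, $\{|u_n|\ge M\}$, bounding the outer pieces by $\eps|u_n|_{2^*}^{2^*}\le\eps C$ (with $\delta,M$ from (F2)) and using the boundedness of $(u_n)$ in $L^{2^*}$, one concludes $\liminf_n\bigl|\{\delta<|u_n|<M\}\bigr|>0$. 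If $(u_n)$ vanished, the Lions lemma in $\cD^{1,2}$ would force $u_n\to0$ in $L^q(\rn)$ for every $q\in(2,2^*)$, hence $\bigl|\{\delta<|u_n|<M\}\bigr|\le\delta^{-q}|u_n|_q^q\to0$, a contradiction. So there exist $\beta>0$ and $\xi_n=(\alpha_n,\eta_n)\in\R^K\times\R^{N-K}$ with $\int_{B(\xi_n,1)}|u_n|^2\,dx\ge\beta$ for all $n$.

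Finally I would recenter along the lattice $\{0\}\times\Z^{N-K}$ --- the only translations preserving $\cJ$, as $\frac{a}{r^2}u$ breaks translation invariance in $y$ while $f$ is $\Z^{N-K}$-periodic in $z$. Since $|u_n|$ is invariant under rotations in $y$, if $|\alpha_n|\to\infty$ along a subsequence then the rotates $gB(\xi_n,1)$, $g\in\cO(K)$, would contain $\gtrsim|\alpha_n|^{K-1}\to\infty$ pairwise disjoint unit balls each carrying $L^{2^*}$-mass $\ge c\beta^{2^*/2}$ (by H\"older on the ball), contradicting the boundedness of $(u_n)$ in $L^{2^*}$; hence $(\alpha_n)$ is bounded and, for $z_n\in\Z^{N-K}$ nearest to $\eta_n$, $B((0,z_n),R)\supset B(\xi_n,1)$ with $R$ fixed. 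Put $\tu_n:=u_n(\cdot,\cdot+z_n)$; then $\tu_n\in X_\cO$, $\|\tu_n\|_a=\|u_n\|_a$, $\cJ(\tu_n)=\cJ(u_n)\to c$ and $\cJ'(\tu_n)\to0$, because $|\nabla\cdot|_2$ and $|\cdot/r|_2$ are invariant under $z$-translations and $f$ is $\Z^{N-K}$-periodic in $z$. Up to a subsequence $\tu_n\rh\tu$ in $X_\cO$, and the compact embedding $X_\cO\hookrightarrow L^2_{\mathrm{loc}}(\rn)$ gives $\int_{B(0,R)}|\tu|^2\,dx\ge\beta$, so $\tu\ne0$; moreover $\tu_n\rh\tu$ in $\cD^{1,2}$ and a.e., so $f(\cdot,\tu_n)$ is bounded in $L^{(2^*)'}(\rn)$ and converges weakly to $f(\cdot,\tu)$, whence $\cJ'(\tu)\phi=\lim_n\cJ'(\tu_n)\phi=0$ for every $\phi\in X_\cO$ (if $u\ne0$ already, this applies directly to $(u_n)$). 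Thus $\tu$ (or $u$) is a nontrivial critical point of $\cJ|_{X_\cO}$ and, by the principle of symmetric criticality \cite{Palais}, a nontrivial solution to \eqref{eqS} in $X_\cO$. I expect the compactness analysis of the last two paragraphs to be the main obstacle: the critical growth permitted by (F2) blocks the usual ``vanishing $\Rightarrow L^q$-convergence'' passage, so one must go through the amplitude-band estimate, and then use the rotational symmetry in $y$ to keep the concentrating profile near the axis $\{0\}\times\R^{N-K}$ so that a lattice translation can recenter it.
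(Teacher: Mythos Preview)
Your strategy is sound and parallel to the paper's, but the non-vanishing step contains a real gap. The assertion that ``the Lions lemma in $\cD^{1,2}$ would force $u_n\to0$ in $L^q(\rn)$ for every $q\in(2,2^*)$'' is false: for $u\in\cC_0^\infty(\R^N)\setminus\{0\}$ and $u_n(x):=n^{-(N-2)/2}u(x/n)$ one has $|\nabla u_n|_2=|\nabla u|_2$ and $\sup_\xi\int_{B(\xi,1)}|u_n|^2\le Cn^{2-N}\to0$, yet $|u_n|_q^q=n^{N-q(N-2)/2}|u|_q^q\to\infty$ for every $q\in[2,2^*)$. What \emph{does} hold, and is all you need, is that vanishing of a bounded $\cD^{1,2}$-sequence forces $\irn\Phi(u_n)\,dx\to0$ for every continuous $\Phi\ge0$ with $\Phi(s)/|s|^{2^*}\to0$ at $0$ and at $\infty$; taking $\Phi(s)=\mathrm{esssup}_z\,f(z,s)s$ contradicts $\liminf_n\irn f(x,u_n)u_n\,dx>0$ directly and renders the amplitude-band detour superfluous. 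If you prefer to keep that detour, rescue it by truncation: $w_n:=\bigl(\min(|u_n|,M)-\delta/2\bigr)_+$ is bounded in $H^1(\R^N)$ (it is bounded by $M$ and supported in $\{|u_n|>\delta/2\}$, a set of measure at most $(\delta/2)^{-2^*}|u_n|_{2^*}^{2^*}$), so the classical $H^1$-Lions lemma gives $w_n\to0$ in $L^q$, whence $|\{\delta<|u_n|<M\}|\le(\delta/2)^{-q}|w_n|_q^q\to0$.

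On the comparison: the paper streamlines the concentration analysis by invoking a Lions-type lemma specific to $\cO$-invariant sequences (Lemma~\ref{Lions}, taken from \cite{MederskiZeroMass}) whose vanishing hypothesis is already centred on the axis, namely $\sup_{z\in\R^{N-K}}\int_{B((0,z),R)}|u_n|^2\,dx\to0$. Its failure yields a concentration point of the form $(0,z_n)$ at once, so your rotational argument bounding $|\alpha_n|$ is absorbed into that lemma and only the $\Z^{N-K}$-translation remains. For the mountain-pass endpoint the paper instead rescales anisotropically, considering $w(\lambda y,z)$ with $\lambda\to0^+$ after constructing $w\in X_\cO$ with $\irn F(z,w)\,dx>\tfrac12\irn|\nabla_zw|^2\,dx$ (Lemma~\ref{positive}); your route via $t\psi$ and the bound $F(z,tu_0)\ge c_0t^\gamma$ is a legitimate and arguably more direct alternative.
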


	Notice that every solution $u$ to \eqref{eqS} can be supposed to be nonnegative if $f(z,u)\ge 0$ for every $u\le 0$ and a.e. $z\in\R^{N-K}$ because, in this case,
	\[
	0\ge-\|u_-\|^2=\int_{\R^N}\langle\nabla u,\nabla u_-\rangle+\frac{a}{r^2}uu_-\,dx=\int_{\R^N}f(z,u)u_-\,dx=\int_{\R^N}f(z,-u_-)u_-\,dx\ge 0
	\]
	(where $u_-:=\max\{-u,0\}$ denotes the negative part of $u$), therefore $u_-=0$ and $u\ge 0$.

	Recall that if $f(x,u)=f(u)$ does not depend on $x$ and $a=1$, then  Badiale, Benci, and Rolando \cite{BadBenRol} found a nontrivial and nonnegative solution to \eqref{eqS} under more restrictive assumptions than in Theorem \ref{ExMul2},  in particular (cf. assumption (f$_1$) there), they assumed the double-power-like behaviour $|f(u)|\leq C\min\{|u|^{p-1},|u|^{q-1}\}$ for $u\in\R$, some constant $C>0$, and $2<p<2^*<q$.  For instance, the result of \cite{BadBenRol} does not allow nonlinearities such as
	\begin{equation*}%\label{Ex}
	f(u):=\begin{cases}
	|u|^{p-2}u\ln(1+|u|) & \hbox{for } |u|\geq 1 \\ 
	\ln(2) \frac{|u|^{2^*-2}u}{1-\ln(|u|)} & \hbox{for } 0<|u|<1\\
	0 & \hbox{for } u=0,
	\end{cases}
	\end{equation*}
	where $2\leq p <2^*$, not even if considering $f \chi_{[0,\infty)}$, where $\chi_{[0,\infty)}$ is the characteristic function of $[0,\infty)$. Observe that $f(x,u)=\Gamma(x)f(u)$ satisfies (F1)--(F4), where 
	$\Gamma\in L^{\infty}(\R^N)$ is
	$\cO$-invariant, $\Z^{N-K}$-periodic in the last $N-K$ components, positive and bounded away from $0$. (F5) is satisfied if and only if $p>2$ and $\Gamma$ does not depend on $y$.
	
	In general, if $f$ satisfies (F1), (F2), (F5) and does not depend on $y$ as in Theorem \ref{ExMul2}, then 
	$f \chi_{[0,\infty)}$ satisfies the same assumptions, but clearly (F3) does not hold.  
	
	We show that the problem in Theorem \ref{ExMul2} has the mountain pass geometry; however, the presence of the singular potential and the nonlinearity of general type cause difficulties in the concentration-compactness analysis. In \cite{BadBenRol}, dealing with an autonomous double-power-like nonlinearity, the authors provided a technical analysis involving translations and rescaling in $\cD^{1,2}(\R^N)$ \cite[Section 4]{BadBenRol}. We show, however, that this rather involved argument can be replaced by a Lions-type lemma for functions in $X_{\cO}$, where we only make use of translations, see Lemma \ref{Lions} for the precise statement. In order to obtain a ground state solution in $X_\cO$ and infinitely many solutions to \eqref{eqS}, instead, we apply the critical point theory from \cite[Section 3]{MeScSz}.
	
In view of Theorems \ref{ExMul}, \ref{ExMul2} and taking into account Theorem \ref{ScalVec}, we obtain the existence of solutions of the form \eqref{eq:formulauU} to \eqref{eqV}. Curl-curl problems in a bounded domain or in $\R^3$ have been recently studied e.g. in \cite{AzzBenDApFor,BartschReich,HirschReichel,BartschMederski1,BartschMederski2,Mederski,MeScSz} under different hypotheses on $h$  assuming $h$ is subcritical, i.e., $h(x,\UU)/|\UU|^5\to 0$ as $|\UU|\to\infty$. A multiplicity result in $\R^3$ has been recently obtained in \cite{MeScSz}. Below we provide the multiplicity result inferred by Theorem \ref{ExMul2} under more general growth assumption, although we have to assume the radial symmetry of $h$ as follows.

\begin{Cor}\label{CorExMul}
	Suppose that $h(x,\alpha w)=f(x,\alpha)w$ for $\alpha\in\R$, $w\in \R^3$ such that $|w|=1$, and a.e. $x\in\R^3$.
	\begin{itemize}
		\item [(a)] If (F1)--(F4) hold, then \eqref{eqV} has infinitely many $\Z$-distinct solutions in $\DF$, one of which minimizes  $\{\cE(\VV):\VV\in\DF\setminus\{\mathbf{0}\}$, $\VV$ is a solution to \eqref{eqV}$\}$.
		\item [(b)] If (F1)--(F2), (F5) hold and $h$ does not depend on $y$, then \eqref{eqV} has a nontrivial solution $\UU\in\DF$.
	\end{itemize}
\end{Cor}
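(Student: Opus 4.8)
The plan is to derive Corollary \ref{CorExMul} directly from Theorem \ref{ScalVec} together with Theorems \ref{ExMul} and \ref{ExMul2}: all the analytic content already sits in those three results, and what remains is to match up the symmetry and energy structures on the two sides of the correspondence \eqref{eq:formulauU}. The one preliminary point worth isolating is that the hypothesis $h(x,\alpha w)=f(x,\alpha)w$, demanded for \emph{every} $\alpha\in\R$ and every unit vector $w\in\R^3$, automatically makes $f(x,\cdot)$ odd: since $(-\alpha)w=\alpha(-w)$ and $|-w|=1$, we get $f(x,-\alpha)w=h\bigl(x,(-\alpha)w\bigr)=h\bigl(x,\alpha(-w)\bigr)=-f(x,\alpha)w$, hence $f(x,-\alpha)=-f(x,\alpha)$ and $f(x,0)=0$; moreover $f(x,\alpha)=\langle h(x,\alpha w),w\rangle$, so $f$ is independent of $y$ whenever $h$ is. Thus in case (a) the oddness of $f$ is free and the \emph{full} conclusion of Theorem \ref{ExMul} is available (note that for $N=3$, $K=2$ one has $a=1>0=-\bigl(\frac{K-2}{2}\bigr)^2$), while in case (b) the hypotheses of Theorem \ref{ExMul2} hold as stated.

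For (a), we would invoke Theorem \ref{ExMul} to obtain a nonnegative ground state $u_0\in X_\cO$ and, $f$ being odd, an infinite family $(u_n)\subset X_\cO$ of pairwise geometrically $\Z$-distinct solutions to \eqref{eqS} with $a=1$, $N=3$, $K=2$ (here $N-K=1$, so translations act in the variable $x_3$), and then transport everything through Theorem \ref{ScalVec}. That theorem provides a bijection $u\mapsto\VV$, given by \eqref{eq:formulauU}, between the solutions to \eqref{eqS} in $X_\cO$ and the solutions to \eqref{eqV} in $\DF$, and it preserves the energy, $\cE(\VV)=\cJ(u)$ — a fact one also reads off directly from $|\VV|=|u|$ pointwise (so that $\int_{\R^3}H(x,\VV)\,dx=\int_{\R^3}F(x,u)\,dx$, $F(x,\cdot)$ being even) together with $|\curlop\VV|_2^2=\|u\|^2$. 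Since \eqref{eq:formulauU} commutes with translations in $x_3$ and is injective off the null set $\{x_1=x_2=0\}$, two elements of $X_\cO$ are geometrically $\Z$-distinct exactly when their images in $\DF$ are; hence the images of $(u_n)$ form an infinite geometrically $\Z$-distinct family of solutions to \eqref{eqV} in $\DF$, and the image $\VV_0$ of $u_0$ minimizes $\cE$ over the nontrivial solutions in $\DF$, because the bijection carries that minimization problem onto the one defining a ground state in $X_\cO$. Discarding, if necessary, the single $\VV_n$ that is $\Z$-equivalent to $\VV_0$, we obtain an infinite geometrically $\Z$-distinct family of solutions in $\DF$ containing a minimizer of $\cE$, which gives (a).

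For (b), the same mechanism applies more quickly: Theorem \ref{ExMul2} yields a nontrivial solution $u\in X_\cO$ to \eqref{eqS}, and Theorem \ref{ScalVec}, via \eqref{eq:formulauU}, turns it into a nontrivial solution $\UU\in\DF$ to \eqref{eqV}. We do not expect a genuine obstacle in this corollary: the real difficulties lie inside Theorem \ref{ScalVec} and Theorems \ref{ExMul}/\ref{ExMul2}, and the only points here that call for attention are the three matchings used above — that the structural hypothesis on $h$ forces $f$ to be odd (and $y$-independent when $h$ is), so that the scalar theorems apply; that \eqref{eq:formulauU} intertwines the $\Z$-translation actions, so that geometric distinctness transfers both ways; and that the correspondence is energy-preserving, so that a ground state of \eqref{eqS} in $X_\cO$ is carried to a minimizer of $\cE$ among the solutions of \eqref{eqV} in $\DF$.
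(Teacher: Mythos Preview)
Your proposal is correct and follows exactly the route the paper takes: the paper's own proof of this corollary is the single line ``The proof follows from Theorems \ref{ExMul}, \ref{ExMul2}, and \ref{ScalVec},'' and you have simply unpacked that reference. Your observation that the structural hypothesis $h(x,\alpha w)=f(x,\alpha)w$ forces $f(x,\cdot)$ to be odd is a genuine and necessary detail (needed to invoke the multiplicity clause of Theorem~\ref{ExMul}) that the paper leaves implicit, and your checks that the correspondence \eqref{eq:formulauU} intertwines the $\Z$-action in $x_3$ and preserves energy are exactly the bookkeeping required to transport geometric distinctness and the ground-state property across Theorem~\ref{ScalVec}.
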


Note that the condition $h(x,\alpha w)=f(x,\alpha)w$ for $\alpha\in\R$ and $w\in \R^3$ such that $|w|=1$ means that $h$ is $\cO(3)$-equivariant (radial) with respect to $\UU$, i.e., $h(x,g \UU)=g h(x,\UU)$ for $g\in\cO(3)$, $\UU\in\R^3$, and a.e. $x\in\R^3$; however, in general, we cannot expect that solutions obtained in Corollary \ref{CorExMul} preserve this symmetry. Indeed,
it follows from \cite[Theorem 1.1]{BartschReich} that any $\cO(3)$-equivariant solution to \eqref{eqV} is trivial provided that $f(x,u)\neq 0$ for $u\neq 0$ and a.e. $x\in\R^3$.

Finally, observe that in Corollary \ref{CorExMul} we obtain weak solutions to \eqref{eqV} by critical points of $\cJ$ from Theorem \ref{ExMul}, although we do not know, in general, whether they are weak solutions to \eqref{eqS} in $\R^3$.

The paper is structured as follows. In Section \ref{sec:vector} we build the functional setting for \eqref{eqS} and \eqref{eqV} and prove that solutions to \eqref{eqS} in $X_{\cO}$ are in one-to-one correspondence to solutions to \eqref{eqV} in $\DF$.  In Section \ref{sec:crit} we study the critical problems in dimension $N=3$, while in Section \ref{sec:noncrit} we study the noncritical ones.

\section{An equivalence result}\label{sec:vector}

In this section we deal with the case $N=3$ and, consequently, $K=2$. In particular, $2^*=6$ and $r=r_x=\sqrt{x_1^2+x_2^2}$ for $x=(x_1,x_2,x_3)\in\R^3$. For $g\in\SO(2)$ we denote $\widetilde{g}=\bigl(\begin{smallmatrix}
g & 0\\
0 & 1
\end{smallmatrix}\bigr)\in\SO=\SO(2)\times \{1\}$. We say that $\BB\colon\R^3\to\R^3$ is $\SO$-equivariant if and only if $g\BB=\BB(g\cdot)$ for every $g\in\SO$.

Let $$\cF:=\Big\{\UU\colon\R^3\to\R^3:\UU(x_1,x_2,x_3)=\frac{b(x)}{r}\Bigl(\begin{smallmatrix}
-x_2\\
x_1\\
0
\end{smallmatrix}\Bigr)\hbox{ for some }\cO\hbox{-invariant } b\colon\R^3\to\R\Big\}.$$
Then $\DF:=\cD^{1,2}(\R^3,\R^3)\cap\cF$ is a closed subspace of $\cD^{1,2}(\R^3,\R^3)$ and note that every $\UU\in\cF$ is $\SO$-equivariant.
	
	The main result of this section is the following.
	
	\begin{Th}\label{ScalVec}
		Let $f$ satisfy (F1) and $|f(x,u)|\leq C|u|^5$ for a.e. $x\in\R^3$, every $u\in\R$ and some constant $C>0$. Let $h\colon\R^3\times\R^3\to\R^3$ be such that $h(x,\alpha w)=f(x,\alpha)w$ for a.e. $x\in\R^3$, for every $\alpha\in\R$ and for every $w\in\R^3$ with $|w|=1$. Suppose that 
		$\UU$ and $u$  satisfy \eqref{eq:formulauU} for a.e. $x\in\R^3$. Then
		$\UU\in\DF$ if and only if $u\in X_{\cO}$ and, in such a case, $\div(\UU)=0$ and
		$\J(u)=\cE(\UU)$. Moreover 
		$u\in X_{\cO}$ is a solution to \eqref{eqS} with $a=1$ 
		if and only if $\UU\in\D_\cF$ is a solution to \eqref{eqV}. 
	\end{Th}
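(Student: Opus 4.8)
The plan is to establish the three claims — equivalence of the function spaces, equality of the energies, and the correspondence of solutions — in that order, treating the map $u\mapsto\UU$ given by \eqref{eq:formulauU} as a concrete linear isometry (up to a fixed constant) between $X_\cO$ and $\DF$. First I would verify the pointwise algebraic identities: for $\UU$ of the form \eqref{eq:formulauU} with $b(x)=u(x)/r$ (and $u$ $\cO$-invariant, hence a function of $(r,x_3)$), one has $|\UU(x)|=|u(x)|$, and, on $\R^3\setminus(\{0\}\times\{0\}\times\R)$, a direct computation of $\nabla\UU$ gives $|\nabla\UU|^2=|\nabla u|^2+u^2/r^2$ (this is the standard computation behind the cylindrical reduction; the cross term vanishes because the angular derivative of $u$ is zero). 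From this, $|\nabla\UU|_2^2=\|u\|^2$ on $\R^3\setminus(\{0\}\times\{0\}\times\R)$, a set of full measure, so the two norms agree; since $\cC_0^\infty$-type density arguments (or the explicit description of $X_\cO$ via the Hardy-type structure) show that $\UU\in\cD^{1,2}(\R^3,\R^3)$ exactly when this quantity is finite and $\UU$ has the right symmetry, we get $\UU\in\DF\iff u\in X_\cO$. The energy identity $\cE(\UU)=\J(u)$ then follows by combining $|\curlop\UU|_2^2=|\nabla\UU|_2^2=\|u\|^2$ (using $\div\UU=0$, established next) with $H(x,\UU)=F(x,u)$, which holds because $h(x,\alpha w)=f(x,\alpha)w$ forces $\langle h(x,t\UU),\UU\rangle=f(x,t|\UU|)|\UU|$, and integrating in $t$ recovers $F(x,|\UU|)=F(x,u)$ pointwise (here $|\UU|=|u|$, and one checks the sign works out, e.g. using oddness of $f$ or writing $F(x,u)=\int_0^u f$).

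Next I would prove $\div\UU=0$ in the distributional sense. Formally, $\div\UU=\partial_1\bigl(-x_2 b/r\bigr)+\partial_2\bigl(x_1 b/r\bigr)$; since $b=b(r,x_3)$ depends on $(x_1,x_2)$ only through $r$, the angular structure of the vector field $(-x_2,x_1,0)/r$ makes this vanish wherever things are smooth. To handle it rigorously for $\UU\in\DF$ (which need not be smooth), I would invoke or reprove Lemma \ref{DivFree}, which is cited in the introduction precisely for this purpose; alternatively one tests against $\phi\in\cC_0^\infty(\R^3,\R^3)$, approximates $u$ in $X_\cO$ by smooth $\cO$-invariant functions, and passes to the limit, noting the singular line $\{0\}\times\{0\}\times\R$ has zero capacity in the relevant sense so it carries no distributional mass. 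Granting $\div\UU=0$, the identity $\curlop\curlop\UU=-\Delta\UU$ holds distributionally, which is what links the two PDEs.

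Finally, for the solution correspondence, I would argue at the level of the variational derivatives rather than the PDEs directly (since for $K=2$ the weak formulation of \eqref{eqS} may fail to make sense against all test functions). The group $\SO\times\{1\}$ acts on $\cD^{1,2}(\R^3,\R^3)$, and the subspace $\DF$ of $\SO$-equivariant fields of the given form is exactly the fixed-point set of a suitable action; by the Palais principle of symmetric criticality, $\UU\in\DF$ is a critical point of $\cE$ iff it is a critical point of $\cE|_{\DF}$. Under the isometry $u\mapsto\UU$, $\cE|_{\DF}$ corresponds to $\J|_{X_\cO}$ (by the energy identity, which also transfers to the $\cC^1$ level: $\cE'(\UU)[\Vc]=\J'(u)[v]$ whenever $\Vc,v$ are related by \eqref{eq:formulauU}), so critical points correspond. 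Thus $u\in X_\cO$ is a critical point of $\J$ restricted to $X_\cO$ — i.e. a solution to \eqref{eqS} in the sense defined in the introduction — iff $\UU\in\DF$ is a critical point of $\cE$ restricted to $\DF$, iff (by symmetric criticality) $\UU$ is a critical point of $\cE$ on all of $\cD^{1,2}(\R^3,\R^3)$, i.e. a solution to \eqref{eqV}.

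The main obstacle I anticipate is the rigorous treatment of the singular line $\{0\}\times\{0\}\times\R$: showing that the pointwise identities for $|\nabla\UU|^2$ and $\div\UU$, valid off this line, genuinely upgrade to statements about $\cD^{1,2}$-elements and distributions on all of $\R^3$. This is where the $K=2$ Hardy structure is delicate (unlike $K>2$, one cannot simply invoke density of $\cC_0^\infty(\R^3)$ in $X$), and it is exactly the point that makes the theorem "not immediate" as the authors note; the cleanest route is to carry out all computations for smooth compactly supported $\cO$-invariant $u$ and then use the definition of $X_\cO$ as a completion together with the closedness of $\DF$ in $\cD^{1,2}(\R^3,\R^3)$.
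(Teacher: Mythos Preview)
Your plan matches the paper's approach in its broad strokes: the solution correspondence is indeed obtained via the Palais principle (the paper identifies $\DF$ among $\SO$-equivariant fields as the fixed-point set of the involution $\cS(\UU_\rho+\UU_\tau+\UU_\zeta):=-\UU_\rho+\UU_\tau-\UU_\zeta$, which is your unspecified ``suitable action''), and then one computes directly that $\cE'(\UU)(\VV)=\cJ'(u)(v)$ whenever $\VV,v$ are related by \eqref{eq:formulauU}. The energy identity and $\div\UU=0$ are handled as you describe.

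There is, however, a real gap in your treatment of the equivalence $\UU\in\DF\Longleftrightarrow u\in X_\cO$. Your completion argument---compute for smooth compactly supported $\cO$-invariant $u$, then pass to the closure using that $\DF$ is closed---establishes only that the image of $X_\cO$ under $u\mapsto\UU$ is a closed isometric copy inside $\DF$; it does not show this image is \emph{all} of $\DF$. For the reverse implication you begin with an arbitrary $\UU\in\DF$ and must prove the associated $u$ lies in $X_\cO$, but the pointwise identity $|\nabla\UU|^2=|\nabla u|^2+u^2/r^2$ is unusable until you already know $u$ is weakly differentiable. The paper resolves this via a separate density result (Proposition~\ref{Density}): every $\UU\in\DF$ is approximated in $\cD^{1,2}$ by fields in $\cC_0(\R^3,\R^3)\cap\cC^\infty(\R^3_*,\R^3)\cap\cH\cap\DF$. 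The proof is not routine---one first approximates by $\SO$-equivariant $\UU_n\in\cC_0^\infty$ (Lemma~\ref{OApprox}), decomposes $\UU_n=\UU_{\rho,n}+\UU_{\tau,n}+\UU_{\zeta,n}$ into radial, tangential, and vertical parts, and shows the tangential component $\UU_{\tau,n}$ alone converges to $\UU$ while vanishing linearly at the axis. Only with such approximants in hand can one verify the corresponding scalars $b_n$ lie in $X_\cO$ and form a Cauchy sequence there. You correctly flag the singular axis as the obstacle, but the mechanism you propose (approximate on the $u$ side) runs in the wrong direction for this implication.
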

	
	\begin{Lem}\label{OApprox}
		If $\UU\in\mathcal{D}^{1,2}(\R^3,\R^3)$ is $\SO$-equivariant, then there exists $(\UU_n)\subset\cC^\infty_0(\R^3,\R^3)$ such that $\UU_n$ is $\SO$-equivariant and $|\nabla\UU_n-\nabla \UU|_{2}\to 0$ as $n\to\infty$.
	\end{Lem}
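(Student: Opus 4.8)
The plan is to mollify and truncate $\UU$ while preserving the $\SO$-equivariance by averaging over the compact group $\SO=\SO(2)\times\{1\}$. First I would recall that $\cC_0^\infty(\R^3,\R^3)$ is dense in $\cD^{1,2}(\R^3,\R^3)$ with respect to $|\nabla\cdot|_2$, so there is a sequence $(\VV_n)\subset\cC_0^\infty(\R^3,\R^3)$, not necessarily symmetric, with $|\nabla\VV_n-\nabla\UU|_2\to 0$. To restore the symmetry, I would introduce the averaging (Haar) projection
\[
P\VV(x):=\int_{\SO}g^{-1}\VV(gx)\,d\mu(g),
\]
where $\mu$ is the normalised Haar measure on $\SO(2)$; since $\SO$ acts linearly and orthogonally on both the domain and the target $\R^3$, $P\VV$ is again smooth and compactly supported (the support of $P\VV$ is contained in the $\SO$-orbit of $\supp\VV$, which is bounded), and $P\VV$ is $\SO$-equivariant by the left-invariance of $\mu$. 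Moreover $P$ fixes every $\SO$-equivariant field, in particular $P\UU=\UU$.

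The key estimate is that $P$ is a bounded operator on $\cD^{1,2}(\R^3,\R^3)$ with $|\nabla(P\VV)|_2\le|\nabla\VV|_2$: indeed $\nabla(g^{-1}\VV(g\cdot))(x)=g^{-1}(\nabla\VV)(gx)g$, and since $g$ is orthogonal this has the same pointwise Frobenius norm as $(\nabla\VV)(gx)$; integrating in $x$ (using that $x\mapsto gx$ is measure-preserving) and then applying Minkowski's integral inequality in $g$ gives $|\nabla(P\VV)|_2\le\int_{\SO}|\nabla\VV|_2\,d\mu(g)=|\nabla\VV|_2$. Setting $\UU_n:=P\VV_n$, each $\UU_n$ lies in $\cC_0^\infty(\R^3,\R^3)$, is $\SO$-equivariant, and
\[
|\nabla\UU_n-\nabla\UU|_2=|\nabla(P\VV_n)-\nabla(P\UU)|_2=|\nabla P(\VV_n-\UU)|_2\le|\nabla\VV_n-\nabla\UU|_2\to 0,
\]
which is the desired conclusion.

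The only genuine subtlety — the step I would flag as requiring care — is the claim that $P\VV$ is smooth and compactly supported: one must check that differentiation under the integral sign is legitimate (uniform bounds on $\VV$ and all its derivatives on a compact orbit-neighbourhood, together with continuity of the $\SO(2)$-action, justify this via dominated convergence) and that the integral over the compact group of smooth integrands is again $\cC^\infty$. Once this regularity is in hand, everything else is a routine change of variables plus Minkowski's inequality, so I do not expect any real obstacle beyond writing these verifications cleanly.
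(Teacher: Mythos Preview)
Your proof is correct and follows the same approach as the paper: average an arbitrary $\cC_0^\infty$ approximating sequence over $\SO$ via the normalised Haar measure to restore equivariance. The only difference is in verifying convergence---you use directly that the averaging projection $P$ is a contraction on $\cD^{1,2}$ with $P\UU=\UU$, while the paper instead shows $|\UU_n-\UU|_6\to 0$ via Jensen's inequality and then that $(\UU_n)$ is Cauchy in $\cD^{1,2}$; your route is slightly more direct.
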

	\begin{proof}
		Since $\UU\in\cD^{1,2}(\R^3,\R^3)$, there exists $(\VV_n)\subset\cC^\infty_0(\R^3,\R^3)$ such that $|\nabla \VV_n-\nabla\UU|_{2}\to 0$ as $n\to\infty$. Let $$\UU_n(x):=\int_{\SO}g^{-1}\VV_n(gx)\,d\mu(g)=\int_{\SO}g^T\VV_n(gx)\,d\mu(g),$$ where $\mu$ is the Haar measure of $\SO$ (note that $\SO$ is compact).
		
		For every $e\in\SO$ we have
		\[
		\UU_n(ex)=\int_{\SO}g^T\VV_n(gex)\,d\mu(g)=e\int_{\SO}g'^T\VV_n(g'x)\,d\mu(g')=e\UU_n(x),
		\]
		i.e., $\UU_n$ is $\SO$-equivariant. Moreover, $\UU_n\in\cC^\infty_0(\R^3,\R^3)$ because so does $\VV_n$.
		
		First we prove that $|\UU_n-\UU|_6\to 0$. From Jensen's inequality there holds
		\[\begin{split}
		|\UU_n-\UU|_6^6 & =\int_{\mathbb{R}^3}\left|\int_{\SO}g^T\VV_n(gx)-\UU(x)\,d\mu(g)\right|^6dx\leq\int_{\SO}\int_{\mathbb{R}^3}\left|g^T\VV_n(gx)-\UU(x)\right|^6dx\,d\mu(g)\\ & =\int_{\SO}\int_{\mathbb{R}^3}\left|g^T\VV_n(gx)-g^T\UU(gx)\right|^6dx\,d\mu(g)=\int_{\SO}\int_{\mathbb{R}^3}\left|\VV_n(gx)-\UU(gx)\right|^6dx\,d\mu(g)\\ &
		=\int_{\SO}|\VV_n-\UU|_6^6\,d\mu(g)=|\VV_n-\UU|_6^6\to 0
		\end{split}\]
		as $n\to\infty$.
		
		Finally, $(\UU_n)$ is a Cauchy sequence in $\cD^{1,2}(\R^3,\R^3)$ because
		\[
		|\nabla \UU_n-\nabla \UU_m|_2\le|\nabla \VV_n-\nabla\VV_m|_2\to 0
		\]
		as $n,m\to\infty$ and we conclude.
	\end{proof}
	 
	\begin{Prop}\label{Density}
		Let 
		\begin{eqnarray*}
		&&\cH:=\big\{\UU\colon\R^3\to\R^3:|\UU(x)|\leq C|(x_1,x_2)|\hbox{ for some }C>0\\
		&& \hspace{3.8cm}\hbox{ uniformly with respect to } x_3 \hbox{ as }(x_1,x_2)\to 0\big\},
		\end{eqnarray*}
		 and set $\R^3_*:=\R^3\setminus(\{0\}\times\{0\}\times\R)$. Then $$\DF=\overline{\cC_0(\R^3,\R^3)\cap\cC^\infty(\R^3_*,\R^3)\cap\cH\cap\DF},$$ where the closure is taken in $\D^{1,2}(\R^3,\R^3)$.
	\end{Prop}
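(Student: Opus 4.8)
The plan is to prove the nontrivial inclusion $\DF\subseteq\overline{\cC_0(\R^3,\R^3)\cap\cC^\infty(\R^3_*,\R^3)\cap\cH\cap\DF}$; the reverse inclusion holds trivially since the set on the right lies in $\DF$, which is closed in $\cD^{1,2}(\R^3,\R^3)$. Fix $\UU\in\DF$ and write $\UU=b\,e_\theta$, where $e_\theta(x):=r^{-1}(-x_2,x_1,0)$ satisfies $|e_\theta|\equiv1$ on $\R^3_*$ and $b\colon\R^3\to\R$ is $\cO$-invariant; thus $\cF=\{c\,e_\theta:c\text{ is }\cO\text{-invariant}\}$. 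The identity that drives the whole argument is
\[
|\nabla(c\,e_\theta)(x)|^2=(\pa_r c)^2+(\pa_{x_3}c)^2+\frac{c(x)^2}{r^2}\qquad\text{for a.e.\ }x\in\R^3_*\text{ and }\cO\text{-invariant }c,
\]
which follows from the expression of the gradient in cylindrical coordinates, $c$ being independent of the angular variable. Two consequences will be used repeatedly: integrating the identity (and using $(\pa_r c)^2+(\pa_{x_3}c)^2=|\nabla c|^2$ for $\cO$-invariant $c$) gives the splitting $|\nabla(c\,e_\theta)|_2^2=|\nabla c|_2^2+\int_{\R^3}c^2/r^2\,dx$; and, applied to $c=b$, it gives $|\UU(x)|/r\le|\nabla\UU(x)|$ a.e., hence $\int_{\R^3}|\UU|^2/r^2\,dx\le|\nabla\UU|_2^2<\infty$.

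I approximate $\UU$ in three steps, each producing a field of the form $c\,e_\theta$. \emph{(i) Truncation at infinity.} Let $\chi\in\cC_0^\infty(\R^3)$ be radial with $\chi\equiv1$ on the unit ball and support in the ball of radius $2$. The fields $\chi(\cdot/R)\UU$ belong to $\DF$ (the cutoff is $\cO$-invariant) and converge to $\UU$ in $\cD^{1,2}(\R^3,\R^3)$ as $R\to\infty$, since $\chi(\cdot/R)\nabla\UU\to\nabla\UU$ in $L^2$ by dominated convergence while $|\UU\otimes\nabla(\chi(\cdot/R))|_2\le\frac{C}{R}|\UU|_{L^2(B_{2R}\setminus B_R)}\le C'|\UU|_{L^6(B_{2R}\setminus B_R)}\to0$ by H\"older's inequality and the Sobolev embedding $\cD^{1,2}(\R^3,\R^3)\hookrightarrow L^6(\R^3,\R^3)$. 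So I may assume $\UU$ compactly supported, hence $\UU\in H^1(\R^3,\R^3)$. \emph{(ii) Truncation near the axis.} For small $\eps>0$ let $\eta_\eps(r)$ equal $0$ on $\{r\le\eps^2\}$, equal $1$ on $\{r\ge\eps\}$, and equal $\log(r/\eps^2)/\log(1/\eps)$ in between, so that $|\nabla\eta_\eps|=\bigl(r\log(1/\eps)\bigr)^{-1}$ on $\{\eps^2<r<\eps\}$. Then $\eta_\eps\UU\in\DF$ vanishes in a neighborhood of the axis $\{0\}\times\{0\}\times\R$, $(\eta_\eps-1)\nabla\UU\to0$ in $L^2$ by dominated convergence, and, using $|\UU|/r\le|\nabla\UU|$,
\[
|\UU\otimes\nabla\eta_\eps|_2^2=\frac{1}{(\log(1/\eps))^2}\int_{\{\eps^2<r<\eps\}}\frac{|\UU|^2}{r^2}\,dx\le\frac{1}{(\log(1/\eps))^2}\int_{\R^3}|\nabla\UU|^2\,dx\longrightarrow0
\]
as $\eps\to0$; thus $\eta_\eps\UU\to\UU$ in $\cD^{1,2}(\R^3,\R^3)$, and I may further assume $\supp\UU$ compact and disjoint from the axis.

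\emph{(iii) Smoothing.} Now $b=\UU\cdot e_\theta$, extended by $0$, lies in $H^1(\R^3)$ with $\supp b$ a compact subset of $\R^3_*$, and is $\cO$-invariant; convolving it with a mollifier that is radial in $\R^3$ (which preserves $\cO$-invariance; cf.\ the proof of Lemma \ref{OApprox}) and has sufficiently small parameter produces $\cO$-invariant $b_\delta\in\cC_0^\infty(\R^3_*)$ with $\supp b_\delta\subseteq\{r\ge\rho_0\}$ for a fixed $\rho_0>0$ and $b_\delta\to b$ in $H^1(\R^3)$. Then $\UU_\delta:=b_\delta\,e_\theta$, extended by $0$ across the axis, is smooth on $\R^3_*$, continuous with compact support on $\R^3$ (being $0$ near the axis), lies in $\cH$ since $|\UU_\delta|=|b_\delta|$ vanishes near the axis, and is of the form $c\,e_\theta$ with $\cO$-invariant $c=b_\delta$; hence $\UU_\delta\in\cC_0(\R^3,\R^3)\cap\cC^\infty(\R^3_*,\R^3)\cap\cH\cap\DF$. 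Finally, the splitting applied to $\UU_\delta-\UU=(b_\delta-b)\,e_\theta$ gives $|\nabla(\UU_\delta-\UU)|_2^2=|\nabla(b_\delta-b)|_2^2+\int_{\R^3}(b_\delta-b)^2/r^2\,dx$, and the last integral is at most $\rho_0^{-2}|b_\delta-b|_2^2$ because $b_\delta$ and $b$ are supported in $\{r\ge\rho_0\}$, so $|\nabla(\UU_\delta-\UU)|_2\to0$. Therefore $\UU$ lies in the closure, and the proof is complete.

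The one genuinely delicate step is (ii): one must force a $\cD^{1,2}$-field to vanish near the line $\{0\}\times\{0\}\times\R$ at asymptotically zero cost in the Dirichlet norm. This is precisely where the codimension of the singular set enters (a line in $\R^3$, i.e.\ $K=2$), and it is what forces a \emph{logarithmic} cutoff rather than a linear one; the pointwise bound $|\UU|/r\le|\nabla\UU|$, which is special to fields of the form $c\,e_\theta$, is what makes the cross term $|\UU\otimes\nabla\eta_\eps|_2$ negligible. Steps (i) and (iii) are routine, the only real care being that the smoothing in (iii) must be performed on the scalar $b$, not on $\UU$ itself, in order to remain in $\cF$, and with rotationally symmetric mollifiers so as to preserve $\cO$-invariance. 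Incidentally, the approximants produced actually vanish in a neighborhood of the axis, which is slightly stronger than what is stated.
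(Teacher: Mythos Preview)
Your proof is correct and proceeds along a genuinely different route from the paper. The paper does not truncate or mollify at all: it first approximates $\UU\in\DF$ by $\SO$-equivariant fields $\UU_n\in\cC_0^\infty(\R^3,\R^3)$ via Lemma~\ref{OApprox} (which need not lie in $\cF$), then applies the orthogonal decomposition $\UU_n=\UU_{\rho,n}+\UU_{\tau,n}+\UU_{\zeta,n}$ of \cite{AzzBenDApFor}; since $\nabla\UU_{\rho,n},\nabla\UU_{\tau,n},\nabla\UU_{\zeta,n}$ are pointwise orthogonal in $\R^9$, the $\tau$-projection is a $\cD^{1,2}$-contraction, whence $\UU_{\tau,n}\to\UU$, and membership of $\UU_{\tau,n}$ in $\cC_0\cap\cC^\infty(\R^3_*)\cap\cH$ is read off from a first-order Taylor expansion at the axis (using that equivariance forces $\UU_n^1=\UU_n^2=0$ there). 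Your argument instead stays inside $\DF$ throughout, trading this structural machinery for a capacity-type logarithmic cutoff at the codimension-two axis; the Hardy-type pointwise bound $|\UU|/r\le|\nabla\UU|$, special to fields of the form $b\,e_\theta$, is precisely what makes that cutoff cost nothing asymptotically. What you gain is a more self-contained proof whose approximants actually vanish near the axis (so they lie in $\cH$ trivially); what the paper gains is that no axis-cutoff step is needed and the approximants descend directly from $\cC_0^\infty$ fields. One minor remark: your pointer to Lemma~\ref{OApprox} in step~(iii) is slightly off, since that lemma uses Haar averaging rather than convolution; the fact you actually use---that mollification with a radial kernel preserves $\cO$-invariance---is correct and elementary.
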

	\begin{proof}
		The inclusion `$\supset$' is obvious since $\DF$ is closed. Now let $\UU\in\DF$. Since $\UU$ is $\SO$-equivariant, in view of Lemma \ref{OApprox} there exists an $\SO$-equivariant sequence $(\UU_n)\subset\cC^\infty_0(\R^3,\R^3)$ such that $\UU_n=(\UU_n^1,\UU_n^2,\UU_n^3)\to\UU$ in $\D^{1,2}(\R^3,\R^3)$.
		
		As in \cite[Lemma 1]{AzzBenDApFor}, for every $n$ there exist $\SO$-equivariant  $\UU_{\rho,n}$, $\UU_{\tau,n}$, $\UU_{\zeta,n}\in\D^{1,2}(\R^3,\R^3)$ such that for every $x\in\R^3_*$
		\begin{itemize}
			\item $\UU_{\rho,n}(x)$ is the projection of $\UU_n(x)$ onto $\span \{(x_1,x_2,0)\}$,
			\item $\UU_{\tau,n}(x)$ is the projection of $\UU_n(x)$ onto $\span\{(-x_2,x_1,0)\}$,
			\item $\UU_{\zeta,n}(x)=\bigl(0,0,\UU_n^3(x)\bigr)$ is the projection of $\UU_n(x)$ onto $\span\{(0,0,1)\}$.
		\end{itemize}
		In particular, $\UU_{\rho,n},\UU_{\tau,n},\UU_{\zeta,n}\in\cC^\infty(\R^3_*,\R^3)$, they vanish outside a sufficiently large ball in $\R^3$ (in fact, $\UU_{\zeta,n}\in\cC^\infty_0(\R^3,\R^3)$) and $\UU_n(x)=\UU_{\rho,n}(x)+\UU_{\tau,n}(x)+\UU_{\zeta,n}(x)$ for every $x\in\R^3_*$. Moreover, $\nabla \UU_{\rho,n}(x)$, $\nabla \UU_{\tau,n}(x)$, $\nabla \UU_{\zeta,n}(x)$ are orthogonal in $\R^{9}$ for every $x\in\R^3_*$.
		
		Then we infer that $\UU_{\tau,n}\to\UU$ in $\cD^{1,2}(\R^3,\R^3)$ so we just need to prove that $\UU_{\tau,n}\in\cC_0(\R^3,\R^3)\cap\cH$ for every $n$.
		
		Since $\UU_n$ is $\SO$-equivariant, for every $g\in\SO$ and every $x_3\in\R$ we have $$g\UU_n(0,0,x_3)=\UU_n\bigl(g(0,0,x_3)\bigr)=\UU_n(0,0,x_3),$$ which implies $\UU_n^1(0,0,x_3)=\UU_n^2(0,0,x_3)=0$, so that $\UU_n^1=\UU_n^2\equiv 0$ on $\{0\}\times\{0\}\times\R$.
		
		Observe that for every $x\in\R^3_*$ we have $$\UU_{\rho,n}(x)=\frac{\langle\UU_n,(x_1,x_2,0)\rangle}{|(x_1,x_2)|^2}\Bigl(\begin{smallmatrix}
		x_1\\
		x_2\\
		0
		\end{smallmatrix}\Bigr)\text{ and }\UU_{\tau,n}(x)=\frac{\langle\UU_n,(-x_2,x_1,0)\rangle}{|(x_1,x_2)|^2}\Bigl(\begin{smallmatrix}
		-x_2\\
		x_1\\
		0
		\end{smallmatrix}\Bigr)$$  
		and it follows from the uniform continuity of $\UU_n$ that for every $x_3\in\R$ $$\lim_{(x_1,x_2)\to 0}\UU_{\rho,n}(x)=\lim_{(x_1,x_2)\to 0}\UU_{\tau,n}(x)=0$$ uniformly with respect to $x_3$. Hence we can extend $\UU_{\rho,n}$ and  $\UU_{\tau,n}$ to $\R^3$ by setting them equal to $0$ on $\{0\}\times\{0\}\times\R$ and get that $\UU_{\rho,n}$, $\UU_{\tau,n}\in\cC_0(\R^3,\R^3)$ and $\UU_n(x)=\UU_{\rho,n}(x)+\UU_{\tau,n}(x)+\UU_{\zeta,n}(x)$ for every $x\in\R^3$.
		
		To prove that $\UU_{\rho,n}+\UU_{\tau,n}\in\cH$, first we notice that $\big(\UU_{n}-\UU_{\zeta,n}\big)\in\cC_0^\infty(\R^3,\R^3)$ and
		\begin{eqnarray*}
		\big(\UU_{\rho,n}+\UU_{\tau,n}\big)(x)&=&\big(\UU_{n}-\UU_{\zeta,n}\big)(x)=\big(\UU_{n}-\UU_{\zeta,n}\big)(0,0,x_3)\\
		&&+\nabla \big(\UU_{n}-\UU_{\zeta,n}\big)(0,0,x_3)(x_1,x_2,0)^T+o(|(x_1,x_2)|)\\
		&=&\nabla \big(\UU_{n}-\UU_{\zeta,n}\big)(0,0,x_3)(x_1,x_2,0)^T+o(|(x_1,x_2)|)
		\end{eqnarray*} 
		as $(x_1,x_2)\to 0$, hence $\UU_{\rho,n}+\UU_{\tau,n}\in\cH$. Finally, note that $|\UU_{\tau,n}|\le|\UU_{\rho,n}+\UU_{\tau,n}|$ and we conclude.
	\end{proof}

From now on we assume that $\UU$ and $u$  satisfy \eqref{eq:formulauU} for a.e. $x\in\R^3$.
\begin{Lem}\label{DivFree}
$\UU\in\DF$ if and only if $u\in X_{\cO}$.  In such a case, $\div(\UU)=0$ and
$\J(u)=\cE(\UU)$, where $f$ and $h$ satisfy the assumptions of Theorem \ref{ScalVec}.
\end{Lem}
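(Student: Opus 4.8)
The plan is to verify the correspondence pointwise off the singular axis $\Sigma:=\{0\}\times\{0\}\times\R$ and then to transfer it to the function spaces by approximation — on the vector side by means of Proposition \ref{Density}, on the scalar side by an analogous density statement (the underlying fact being that $\Sigma$, a line in $\R^3$, has zero $\cD^{1,2}$-capacity). Write $w(x):=\tfrac1r(-x_2,x_1,0)$ for $x\in\R^3\setminus\Sigma$, so that $|w|=1$ and, by \eqref{eq:formulauU}, $\UU=uw$ and $u=\langle\UU,w\rangle$ there; in particular $|\UU|=|u|$ a.e. Since $h(x,\alpha w')=f(x,\alpha)w'$ for $\alpha\in\R$ and $|w'|=1$, we have $\langle h(x,t\UU),\UU\rangle=f(x,tu)u$ for every $t$, hence $H(x,\UU)=\int_0^1 f(x,tu)u\,dt=F(x,u)$ a.e., which by $|f(x,u)|\le C|u|^5$ lies in $L^1(\R^3)$ as soon as $u\in L^6(\R^3)$; thus $\int_{\R^3}H(x,\UU)\,dx=\int_{\R^3}F(x,u)\,dx$ whenever either side is meaningful. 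The differential counterpart, valid whenever $\UU$ (equivalently $u$) is weakly differentiable on $\R^3\setminus\Sigma$, is
\[
\div\UU=0\qquad\text{and}\qquad|\nabla\UU|^2=|\nabla u|^2+\frac{u^2}{r^2}\qquad\text{a.e.\ on }\R^3\setminus\Sigma;
\]
the first equality holds because $\UU$ is the azimuthal field of amplitude $u=u(r,x_3)$, and the second follows by expanding $\sum_{i,j}(\partial_i\UU^j)^2$ in Cartesian coordinates, the radial part combining as $(u_r-u/r)(u_r+u/r)+2u^2/r^2=u_r^2+u^2/r^2$ so that the total equals $u_r^2+u^2/r^2+u_{x_3}^2$. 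I would check these first for $u\in\cC^1(\R^3\setminus\Sigma)$ and then in general via the product rule, $w$ being smooth off $\Sigma$.

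To transfer the identities to the spaces: if $\UU\in\DF$, Proposition \ref{Density} yields $\UU_n\in\cC_0(\R^3,\R^3)\cap\cC^\infty(\R^3\setminus\Sigma,\R^3)\cap\cH\cap\DF$ with $\UU_n\to\UU$ in $\cD^{1,2}(\R^3,\R^3)$; putting $u_n:=\langle\UU_n,w\rangle$ (extended by $0$ on $\Sigma$), the bound $|u_n|=|\UU_n|\le C|(x_1,x_2)|$ near $\Sigma$ makes $u_n$ continuous, $\cO$-invariant, compactly supported, and smooth off $\Sigma$, while a cutoff near $\Sigma$ (using $u_n^2/r^2\le C^2$ there together with the differential identity) gives $u_n\in X_\cO$. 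Applying the differential identity to $\UU_n-\UU_m$ gives $\|u_n-u_m\|^2=|\nabla\UU_n-\nabla\UU_m|_2^2\to0$, so $(u_n)$ converges in $X_\cO$ to a function which, along a subsequence with $\UU_n\to\UU$ a.e., equals $u$; hence $u\in X_\cO$ and $\|u\|=|\nabla\UU|_2$. Conversely, if $u\in X_\cO$, truncating $u$ so as to vanish near $\Sigma$ (the cutoff's derivative being controlled because $\int_{\{\delta<r<2\delta\}}u^2/r^2\to0$), mollifying, and averaging over $\cO$ produce $u_n\in\cC_0^\infty(\R^3\setminus\Sigma)\cap X_\cO$ with $u_n\to u$ in $X_\cO$; then $\UU_n:=u_nw\in\cC_0^\infty(\R^3,\R^3)\cap\DF$ satisfy $|\nabla\UU_n-\nabla\UU_m|_2=\|u_n-u_m\|\to0$ and converge in $\cD^{1,2}$ to a field equal a.e.\ to $\UU$, whence $\UU\in\DF$ and $|\nabla\UU|_2=\|u\|$. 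In either case $\div\UU_n=0$ on $\R^3\setminus\Sigma$ classically, and since $\langle\UU_n,(x_1,x_2,0)\rangle\equiv0$ the flux of $\UU_n$ through $\{r=\eps\}$ vanishes, so $\div\UU_n=0$ in the sense of distributions on $\R^3$; letting $n\to\infty$ gives $\div\UU=0$, and $|\UU_n|=|u_n|\to|u|=|\UU|$ in $L^6$ gives $\int_{\R^3}H(x,\UU)\,dx=\int_{\R^3}F(x,u)\,dx$.

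Finally, for $\UU\in\cC_0^\infty(\R^3,\R^3)$ Plancherel's identity gives $|\curlop\UU|_2^2+|\div\UU|_2^2=|\nabla\UU|_2^2$, which in Fourier variables is the pointwise equality $|\xi\times\widehat\UU|^2+|\xi\cdot\widehat\UU|^2=|\xi|^2|\widehat\UU|^2$; this extends to $\cD^{1,2}(\R^3,\R^3)$ by density, all three functionals being continuous in $|\nabla\cdot|_2$. With $\div\UU=0$ we obtain $|\curlop\UU|_2^2=|\nabla\UU|_2^2=\|u\|^2=|\nabla u|_2^2+|u/r|_2^2$, hence
\[
\cE(\UU)=\tfrac12|\curlop\UU|_2^2-\int_{\R^3}H(x,\UU)\,dx=\tfrac12\bigl(|\nabla u|_2^2+|u/r|_2^2\bigr)-\int_{\R^3}F(x,u)\,dx=\J(u).
\]
The main obstacle is the approximation in the middle paragraph: everything rests on controlling the sequences near $\Sigma$, i.e.\ on the removability of the axis for $\cD^{1,2}(\R^3,\R^3)$ and for $X$ — supplied by Proposition \ref{Density} on the vector side and by the truncation/mollification/symmetrization argument on the scalar side. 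Granting those, the remainder is bookkeeping with the pointwise identities of the first paragraph.
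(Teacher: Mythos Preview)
Your proposal is correct and, for the implication $\UU\in\DF\Rightarrow u\in X_\cO$, essentially identical to the paper's: both invoke Proposition~\ref{Density}, set $u_n=\langle\UU_n,w\rangle$, and pass to the limit via the Cauchy identity $\|u_n-u_m\|^2=|\nabla(\UU_n-\UU_m)|_2^2$.

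The genuine difference lies in the converse $u\in X_\cO\Rightarrow\UU\in\DF$. The paper proceeds \emph{directly}: it writes out each partial derivative of $\UU$ in Cartesian coordinates and checks, term by term, that the distributional and pointwise derivatives coincide and lie in $L^2$, the key point being that the singular terms such as $u\,x_1x_2/r^3$ are integrable against test functions precisely because $u/r\in L^2$. You instead argue by \emph{approximation on the scalar side}: truncate $u$ near $\Sigma$ (using $\int_{\{\delta<r<2\delta\}}u^2/r^2\to0$ to control the cutoff), mollify, and then transfer the pointwise identity $|\nabla\UU|^2=|\nabla u|^2+u^2/r^2$ to conclude that $(\UU_n)$ is Cauchy in $\cD^{1,2}$. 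Your route is more symmetric (both directions use density) and yields, as a by-product, the density of $\cC_0^\infty(\R^3\setminus\Sigma)\cap X_\cO$ in $X_\cO$ --- a statement of independent interest that the paper never isolates. The paper's route, on the other hand, is more self-contained for this direction and avoids introducing a second approximation lemma. Your Plancherel justification of $|\curlop\UU|_2^2=|\nabla\UU|_2^2$ for divergence-free fields is also a welcome addition; the paper simply asserts this equality.
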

\begin{proof}
Suppose that $u\in X_\cO$ and
let $\UU$ be of the form \eqref{eq:formulauU}. We show that
the pointwise a.e. gradient of $\UU$ in $\R^3$ is also the distributional gradient of $\UU$ in $\R^3$. Indeed, for the derivative along $x_1$ of the first component of $\UU$  we have
\[\begin{split}
\int_{\R^3}u(x)\frac{-x_2}{\sqrt{x_1^2+x_2^2}}\partial_{x_1}\phi(x)\,dx&=
\int_{\R^3}\biggl(\partial_{x_1}u(x)\frac{x_2}{\sqrt{x_1^2+x_2^2}}\phi(x)-u(x)\frac{x_1x_2}{(x_1^2+x_2^2)^\frac{3}{2}}\phi(x)\biggr)\,dx\\
&=-\int_{\R^3}\partial_{x_1}\biggl(u(x)\frac{-x_2}{\sqrt{x_1^2+x_2^2}}\biggr)\phi(x)\,dx<\infty
\end{split}\]
for every $\phi\in\cC_0^{\infty}(\R^3)$, since $\displaystyle\int_{\R^3}u(x)\frac{x_1x_2}{(x_1^2+x_2^2)^\frac{3}{2}}\phi(x)\,dx<\infty$ for $u\in X$.	
For the derivative along $x_1$ of the second component of $\UU$ similarly we get
\begin{eqnarray*}
	&&\int_{\R^3}u(x)\frac{x_1}{\sqrt{x_1^2+x_2^2}}\partial_{x_1}\phi(x)\,dx\\
	&&=-\int_{\R^3}\biggl(\partial_{x_1}u(x)\frac{x_1}{\sqrt{x_1^2+x_2^2}}+u(x)\biggl(\frac{1}{\sqrt{x_1^2+x_2^2}}-\frac{x_1^2}{(x_1^2+x_2^2)^\frac{3}{2}}\biggr)\biggr)\phi(x)\,dx\\
	&&=-\int_{\R^3}\partial_{x_1}\biggl(u(x)\frac{x_1}{\sqrt{x_1^2+x_2^2}}\biggr)\phi(x)\,dx<\infty
\end{eqnarray*}
for every $\phi\in\cC_0^{\infty}(\R^3)$.
The remaining cases are similar. 

Now observe that $\UU=(\UU_1,\UU_2,\UU_3)\in L^6(\R^3,\R^3)\cap\cF$. Moreover, $$\partial_{x_1}\UU_1(x)=\partial_{x_1}u(x)\frac{x_2}{\sqrt{x_1^2+x_2^2}}-u(x)\frac{x_1x_2}{(x_1^2+x_2^2)^\frac{3}{2}}\in L^2(\R)$$ and $$\partial_{x_1}\UU_2(x)=-\partial_{x_1}u(x)\frac{x_1}{\sqrt{x_1^2+x_2^2}}-u(x)\biggl(\frac{1}{\sqrt{x_1^2+x_2^2}}-\frac{x_1^2}{(x_1^2+x_2^2)^\frac{3}{2}}\biggr)\in L^2(\R),$$ since $u\in X$.
Again, the remaining cases are similar and we infer that $\UU\in\cD_\cF$.

Now suppose that $\UU\in\DF$ and, due to Proposition \ref{Density}, let $(\BB_n)\subset\cC_0(\R^3,\R^3)\cap\cC^\infty(\R^3_*,\R^3)\cap\cH\cap\DF$ such that $\lim_n|\nabla\BB_n-\nabla\UU|_2=0$ and let $(b_n)$ be $\cO$-invariant such that $\BB_n$ and $b_n$ satisfy formula \eqref{eq:formulauU}.

We prove that $b_n\in X_{\cO}$. Since $|\BB_n|=|b_n|$, of course $b_n\in\cC_0(\R^3)\cap\cC^{\infty}(\R^3_*)\subset L^6(\R^3)$ and $|b_n(x)|\le C|(x_1,x_2)|$ for some $C>0$ uniformly with respect to $x_3$ as $(x_1,x_2)\to0$, therefore
\[
\int_{\R^3}\frac{b_n^2}{r^2}\,dx<\infty.
\]
Moreover, $$L^2(\R^3,\R^{3\times 3})\ni\nabla\BB_n(x)=\frac{1}{\sqrt{x_1^2+x_2^2}}\Bigl(\begin{smallmatrix}
-x_2\\
x_1\\
0
\end{smallmatrix}\Bigr)\nabla b_n(x)^T+\frac{b_n(x)}{(x_1^2+x_2^2)^\frac{3}{2}}\begin{pmatrix}
x_1x_2 & -x_1^2 & 0\\
x_2^2 & -x_1x_2 & 0\\
0 & 0 & 0
\end{pmatrix}$$ and the second summand is square-summable because $$\Bigg|\frac{1}{(x_1^2+x_2^2)^\frac{3}{2}}\begin{pmatrix}
x_1x_2 & -x_1^2 & 0\\
x_2^2 & -x_1x_2 & 0\\
0 & 0 & 0
\end{pmatrix}\Bigg|_{\R^{3\times 3}}=\frac{1}{(x_1^2+x_2^2)^\frac{3}{2}}\Bigg|\begin{pmatrix}
x_1\\
x_2\\
0
\end{pmatrix}\begin{pmatrix}
x_2 & -x_1 & 0
\end{pmatrix}\Bigg|_{\R^{3\times 3}}=\frac{1}{\sqrt{x_1^2+x_2^2}},$$
where $|\cdot|_{\R^{3\times 3}}$ stands for the matrix norm in $\R^{3\times 3}$. It follows that $\nabla b_n\in L^2(\R^3,\R^3)$, thus $b_n\in X_{\cO}$.

Since $\lim_n|b_n-u|_6=\lim_n|\BB_n-\UU|_6=0$, it is enough to prove that $(b_n)$ is a Cauchy sequence in $X$, therefore we compute
\[\begin{split}
\|b_n-b_m\|^2 & =\int_{\R^3}\langle\nabla(b_n-b_m),\nabla(b_n-b_m)\rangle+\frac{(b_n-b_m)(b_n-b_m)}{r^2}\,dx\\
& =\int_{\R^3}\langle\nabla(\BB_n-\BB_m),\nabla(\BB_n-\BB_m)\rangle\,dx=|\nabla(\BB_n-\BB_m)|_2^2\to0
\end{split}\]
as $n,m\to\infty$.

Next, observe that $\div(\BB_n(x))=0$ for every $x\in\R^3_*$. It follows that, up to a subsequence, $\div(\UU(x))=\lim_n\div(\BB_n(x))=0$ for a.e. $x\in\R^3$ and recall that the pointwise a.e. divergence of $\UU$ is also the distributional divergence of $\UU$.

Finally, observe that if $u\in X_{\cO}$ and $\UU\in \cD_\cF$ satisfy \eqref{eq:formulauU} a.e. on $\R^3$, then $\|u\|^2=|\nabla\UU|_2^2=|\curlop \UU|_2^2$ and $F(x,u(x))=H(x,\UU(x))$ for a.e. $x\in\R^3$.
	\end{proof}
	
	\begin{proof}[Proof of Theorem \ref{ScalVec}]
		The first part follows directly from Lemma \ref{DivFree}. Recall (cf. \cite[Section 2]{AzzBenDApFor}) that if $\UU\in\cD^{1,2}(\R^3,\R^3)$ is $\SO$-equivariant, then $\UU\in\DF$ if and only if $\UU$ is invariant with respect to the action
		\[
		\cS(\UU)=\cS(\UU_\rho+\UU_\tau+\UU_\zeta):=-\UU_\rho+\UU_\tau-\UU_\zeta.
		\]
		Recall also that the functional $\cE$ defined in \eqref{eq:I} is invariant under this action.
		
		Let $\VV\in\DF$ and $v\in X_{\cO}$ satisfy \eqref{eq:formulauU} and note that, arguing as in Lemma \ref{DivFree},
		\[
		\int_{\R^3}\langle\curlop\UU,\curlop\VV\rangle\,dx
		%=\int_{\R^3}\curlop\curlop\UU\cdot\VV\,dx=-\int_{\R^3}\De\UU\cdot\VV
		=\int_{\R^3}\langle\nabla\UU,\nabla\VV\rangle\,dx=\int_{\R^3}\langle\nabla u,\nabla v\rangle+\frac{uv}{r^2}\,dx
		\]
		and
		\[\begin{split}
		\int_{\R^3}\langle h\bigl(x,\UU(x)\bigr),\VV(x)\rangle\,dx&=\int_{\R^3}\Big\langle h\biggl(x,\frac{u}{r}\Bigl(\begin{smallmatrix}
		-x_2\\
		x_1\\
		0
		\end{smallmatrix}\Bigr)\biggr),\frac{v(x)}{r}\Bigl(\begin{smallmatrix}
		-x_2\\
		x_1\\
		0
		\end{smallmatrix}\Bigr)\Big\rangle\,dx\\
		&=\int_{\R^3}\Big\langle f\bigl(x,u(x)\bigr)\frac{1}{r}\Bigl(\begin{smallmatrix}
		-x_2\\
		x_1\\
		0
		\end{smallmatrix}\Bigr),\frac{v(x)}{r}\Bigl(\begin{smallmatrix}
		-x_2\\
		x_1\\
		0
		\end{smallmatrix}\Bigr)\Big\rangle\,dx\\
		&=\int_{\R^3}f\bigl(x,u(x)\bigr)v(x)\,dx.\qedhere
		\end{split}\]
	\end{proof}

\section{The critical case}\label{sec:crit}

In this section we prove Theorem \ref{critV}. Recall that in this context $N=3$ (hence $K=2$),
\begin{eqnarray*}
	\cE(\UU)&=&\frac12\int_{\R^3}|\curlop\UU|^2\,dx-\frac{1}{6}\int_{\R^3}|\UU|^6\,dx,\\
	\cJ(u)&=&\frac12\int_{\R^3}|\nabla u|^2+\frac{1}{r^2}|u|^2\,dx-\frac{1}{6}\int_{\R^3}|u|^6\,dx.
\end{eqnarray*}

Let $\pi\colon\mbS^3\setminus\{Q\}\to\R^3$ be the stereographic projection, where $Q=(0,0,0,1)$ is the north pole, and let
\[
\vphi\colon x\in\R^3\mapsto\sqrt{\frac{2}{|x|^2+1}}\in\R.
\]
Explicitly,
\[
\pi(\xi)=\frac{1}{1-\xi_4}(\xi_1,\xi_2,\xi_3),\quad\xi=(\xi_1,\xi_2,\xi_3,\xi_4),
\]
and
\[
\pi^{-1}(x)=\frac{1}{|x|^2+1}(2x_1,2x_2,2x_3,|x|^2-1),\quad x= (x_1,x_2,x_3).
\]

Recall that $\widetilde{g}=\bigl(\begin{smallmatrix}
	g & 0\\
	0 & 1
\end{smallmatrix}\bigr)$ for $g\in\SO(2)$, $\SO=\{\widetilde{g}:g\in\SO(2)\}$ and $\cD_{\SO(2)\times\SO(2)}$ is the subspace of $\cD^{1,2}(\R^3,\R^3)$ of $\SO(2)\times\SO(2)$-symmetric vector fields according to Definition \ref{def:sym}.

\begin{Lem}\label{equisym}
	If $\UU\in\cD^{1,2}(\R^3,\R^3)$ is $\SO(2)\times\SO(2)$-symmetric, then $\UU$ is $\SO$-equivariant.
\end{Lem}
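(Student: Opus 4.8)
The plan is to unwind Definition \ref{def:sym} and show that the $\SO(2)\times\SO(2)$-symmetry, taken for the second factor $g_2$, forces the $\SO$-equivariance condition $g\UU=\UU(g\cdot)$ for all $g\in\SO$. The key observation is that the conjugated action $x\mapsto\pi(g\pi^{-1}(x))$ appearing in the definition is, for $g=(g_1,g_2)\in\SO(2)\times\SO(2)$, a conformal transformation of $\R^3\cup\{\infty\}$, and the weight $\vphi$ is exactly the factor $\sqrt{2/(|x|^2+1)}$ that encodes the conformal change; so the quotient $\UU/\vphi$ is where the symmetry acts cleanly. I would therefore first record, for $g_2$ alone (i.e. $g_1=I_2$), what the transformation $\gamma_{g_2}:=\pi\circ\widetilde{\widetilde{g_2}}\circ\pi^{-1}$ is, where $\widetilde{\widetilde{g_2}}=\bigl(\begin{smallmatrix}I_2&0\\0&g_2\end{smallmatrix}\bigr)\in\SO(4)$. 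Since $\widetilde{\widetilde{g_2}}$ fixes the plane spanned by $e_1,e_2$ pointwise and rotates the $(\xi_3,\xi_4)$-plane, and $Q=(0,0,0,1)$ lies in that rotated plane, $\gamma_{g_2}$ is a conformal map of $\R^3$ that fixes the $x_3$-axis setwise; in fact a short computation (which I will state but not grind through) identifies $\gamma_{g_2}$ as a combination of a dilation and translation in the $x_3$-direction — or, more robustly, one simply keeps it abstract and uses only its conformal factor.

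Next I would extract the scalar identity. Writing $x'=\gamma_{g_2}(x)$, Definition \ref{def:sym} with $g_1=I_2$ gives
\[
\frac{\UU(x')}{\vphi(x')}=\frac{\UU(x)}{\vphi(x)},
\]
since $\widetilde{g_1}=I_3$ in this case. Taking $x=(0,0,x_3)$ on the $x_3$-axis — which $\gamma_{g_2}$ maps to another point of the axis — this already constrains $\UU$ on the axis. For the general equivariance statement, however, what I actually need is the behaviour under $g_1$: putting $g_2=I_2$ in Definition \ref{def:sym}, the map $\gamma_{g_1}:=\pi\circ\widetilde{\widetilde{g_1}}\circ\pi^{-1}$ with $\widetilde{\widetilde{g_1}}=\bigl(\begin{smallmatrix}g_1&0&0\\0&1&0\\0&0&1\end{smallmatrix}\bigr)\in\SO(4)$ is precisely the rotation $\widetilde{g_1}=\bigl(\begin{smallmatrix}g_1&0\\0&1\end{smallmatrix}\bigr)\in\SO$ acting linearly on $\R^3$ — because $\widetilde{\widetilde{g_1}}$ commutes with the one-parameter structure of $\pi$ in the radial variable and fixes $\xi_3,\xi_4$. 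A linear isometry of $\R^3$ commutes with $x\mapsto|x|$, hence with $\vphi$, so $\vphi(\gamma_{g_1}(x))=\vphi(\widetilde{g_1}x)=\vphi(x)$, and Definition \ref{def:sym} collapses to
\[
\frac{\UU(\widetilde{g_1}x)}{\vphi(x)}=\frac{\widetilde{g_1}\UU(x)}{\vphi(x)},
\]
i.e. $\UU(\widetilde{g_1}x)=\widetilde{g_1}\UU(x)$ for a.e.\ $x$ and every $g_1\in\SO(2)$. Since every element of $\SO=\SO(2)\times\{1\}$ is of the form $\widetilde{g_1}$, this is exactly $\SO$-equivariance.

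The main obstacle is the verification that $\gamma_{g_1}=\pi\circ\widetilde{\widetilde{g_1}}\circ\pi^{-1}$ coincides with the linear action of $\widetilde{g_1}$ on $\R^3$ — equivalently, that the stereographic projection intertwines the block-rotation in the first two coordinates of $\mbS^3$ with the corresponding planar rotation in $\R^3$. This is a direct but slightly fiddly computation with the explicit formulas for $\pi$ and $\pi^{-1}$ given above: one checks that if $\xi=\pi^{-1}(x)$ and $\eta=\widetilde{\widetilde{g_1}}\xi$, then $\pi(\eta)=\widetilde{g_1}x$, using that $\widetilde{\widetilde{g_1}}$ leaves $\xi_3$ and $\xi_4$ fixed and rotates $(\xi_1,\xi_2)$, while $\pi$ divides by $1-\xi_4$ and keeps the angular part of $(\xi_1,\xi_2,\xi_3)$ untouched in the last coordinate — so the denominator is unchanged and the first two output coordinates rotate by $g_1$. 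I would present this as a one-line computation, noting that it also shows $|x|$ is preserved (hence $\vphi$ is), which is the only analytic fact needed; the conformal-factor bookkeeping for the $g_2$-direction is then not even required for this lemma, though it is the reason the normalization by $\vphi$ appears in Definition \ref{def:sym} in the first place.
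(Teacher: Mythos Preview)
Your proof is correct and follows essentially the same approach as the paper: set $g_2=I_2$, verify that $\pi\circ(g_1,I_2)\circ\pi^{-1}$ is the linear rotation $\widetilde{g_1}$ on $\R^3$ (equivalently, the paper writes $g\pi^{-1}(x)=\pi^{-1}(\widetilde{g_1}x)$), observe that $\vphi$ is invariant under this rotation, and read off $\UU(\widetilde{g_1}x)=\widetilde{g_1}\UU(x)$ from Definition~\ref{def:sym}. The opening sentence and the digression about the $g_2$-direction are misleading and unnecessary for this lemma---the paper goes straight to $g_2=I_2$---so you could streamline by deleting that discussion.
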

\begin{proof}
	Let $g_1\in\SO(2)$ and define $g:=(g_1,I_2)\in\SO(2)\times\SO(2)$, where $I_2\in\SO(2)$ is the identity matrix. Note that $$g\pi^{-1}(x)=\pi^{-1}(\widetilde{g_1}x)$$ for every $x\in\R^3$, therefore
	\[\begin{split}
		\widetilde{g_1}\UU(x)&=\frac{\varphi(x)}{\varphi\Bigl(\pi\bigl(g\pi^{-1}(x)\bigr)\Bigr)}\UU\Bigl(\pi\bigl(g\pi^{-1}(x)\bigr)\Bigr)\\
		&=\frac{\varphi(x)}{\varphi(\widetilde{g_1}x)}\UU(\widetilde{g_1}x)=\UU	(\widetilde{g_1}x).\qedhere
	\end{split}\]
\end{proof}

\begin{Lem}\label{compact}
	The embedding
	$\cD_{\SO(2)\times\SO(2)}\subset L^6(\R^3,\R^3)$ is compact.
\end{Lem}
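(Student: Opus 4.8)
The plan is to use the correspondence with scalar functions established in Section \ref{sec:vector} to transfer the problem to a compactness statement for a subspace of $X_{\cO}\subset X$, and then to exploit the conformal $\Gamma$-invariance coming from Definition \ref{def:sym}. First I would observe that every $\UU\in\cD_{\SO(2)\times\SO(2)}$ is $\SO$-equivariant by Lemma \ref{equisym}, so if in addition $\UU\in\DF$ then, writing $\UU$ in the form \eqref{eq:formulauU} with scalar function $u$, Lemma \ref{DivFree} gives $u\in X_{\cO}$ with $\|u\|=|\nabla\UU|_2$ and $|u|_6=|\UU|_6$. The $\SO(2)\times\SO(2)$-symmetry of $\UU$ then translates, via the explicit relation $g\pi^{-1}(x)=\pi^{-1}(\widetilde{g_1}x)$ and the definition of $\vphi$, into the statement that $\vphi^{-1}u$ (equivalently, the function $\widetilde{u}:=\vphi^{-1}(u\circ\pi)$ on $\mbS^3$, or $u$ seen through the conformal action) is invariant under the full conformal group $\Gamma=\cO(2)\times\cO(2)$ acting on scalar functions on $\R^3$ as recalled in the Introduction. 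Since that $\Gamma$-invariant subspace of $\cD^{1,2}(\R^3)$ is compactly embedded in $L^6(\R^3)$ (as in \cite{ClPis,Ding}), a bounded sequence $(\UU_n)$ in $\cD_{\SO(2)\times\SO(2)}$ yields a bounded sequence $(u_n)$ of $\Gamma$-invariant functions, a subsequence of which converges in $L^6(\R^3)$, hence $(\UU_n)$ converges in $L^6(\R^3,\R^3)$.

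The one gap in the argument above is that I have only treated $\UU\in\DF$, whereas the lemma concerns all of $\cD_{\SO(2)\times\SO(2)}$. So the key structural step is to show $\cD_{\SO(2)\times\SO(2)}\subset\DF$, i.e.\ that the equivariant symmetry of Definition \ref{def:sym} forces the pointwise form \eqref{eq:formulauU}. The plan here is to decompose, as in Proposition \ref{Density} and \cite[Lemma 1]{AzzBenDApFor}, an $\SO$-equivariant $\UU$ into its radial, tangential, and $x_3$-components $\UU_\rho+\UU_\tau+\UU_\zeta$, and to show that the $\SO(2)\times\SO(2)$-symmetry kills $\UU_\rho$ and $\UU_\zeta$. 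Concretely, the action $\cS$ recalled in the proof of Theorem \ref{ScalVec} (fixing $\DF$) should be expressible through the symmetry of Definition \ref{def:sym} applied with a suitable $g$: taking $g=(g_1,g_2)$ with $g_1$ a rotation by $\pi$ (so $\widetilde{g_1}=\mathrm{diag}(-1,-1,1)$) and choosing $g_2$ so that $\pi\bigl(g\pi^{-1}(x)\bigr)$ reflects the last coordinate, one reads off that $\cS(\UU)=\UU$, which by the cited characterization is equivalent to $\UU\in\DF$. Alternatively, and perhaps more cleanly, one checks directly on the defining relation: evaluating Definition \ref{def:sym} with $g_2=I_2$ recovers $\SO$-equivariance (Lemma \ref{equisym}), and evaluating it with $g_1=I_2$ and $g_2$ ranging over $\SO(2)$ imposes a constraint that, combined with $\SO$-equivariance, forces $\UU$ to be a multiple of $(-x_2,x_1,0)/r$ with $\cO$-invariant coefficient; this is because $\vphi$ depends only on $|x|$ and the composition $x\mapsto\pi(g\pi^{-1}(x))$ with $g=(I_2,g_2)$ is a conformal map mixing $x_3$ with $|x|$, ruling out any dependence of the profile on the direction of $x_3$ or on $\rho$ beyond the prescribed one.

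I expect the main obstacle to be exactly this last point: making rigorous that Definition \ref{def:sym} implies \eqref{eq:formulauU}, since it requires understanding how the conformal action of the second $\SO(2)$ factor (which acts on the $(\xi_3,\xi_4)$-plane of $\mbS^3$, i.e.\ nontrivially through the stereographic projection on both $x_3$ and the "radial" variable) interacts with the vector structure. The cleanest route is likely to first prove $\cD_{\SO(2)\times\SO(2)}\subset\DF$ via the $\cS$-invariance characterization, since $\cS$ is an \emph{isometric} involution on $\cD^{1,2}(\R^3,\R^3)$ and one only needs to exhibit, for the reflection $R:=\mathrm{diag}(1,1,-1)$, an element $g=(g_1,g_2)\in\SO(2)\times\SO(2)$ with $\pi\bigl(g\pi^{-1}(x)\bigr)=Rx$ and $\widetilde{g_1}=I_3$ on the relevant components — then Definition \ref{def:sym} reads $\UU(Rx)/\vphi(Rx)=\UU(x)/\vphi(x)$, i.e.\ (since $\vphi(Rx)=\vphi(x)$) $\UU(Rx)=\UU(x)$, and combined with $\SO$-equivariance this is precisely $\cS(\UU)=\UU$, hence $\UU\in\DF$. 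Once that inclusion is in hand, the rest of the proof is the routine transfer to scalar functions and citation of the known scalar compactness result, with the boundedness of $(u_n)$ in $X$ controlled by $\|u_n\|=|\nabla\UU_n|_2$ and the $\Gamma$-invariance of $u_n$ read off from Definition \ref{def:sym} exactly as Lemma \ref{equisym} reads off $\SO$-equivariance.
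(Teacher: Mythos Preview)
Your plan contains a genuine error: the inclusion $\cD_{\SO(2)\times\SO(2)}\subset\DF$ that you call the ``key structural step'' is \emph{false}. In fact, as is made explicit later in the paper (Lemma \ref{lem:inf}), one has the direct-sum decomposition $\cD_{\SO(2)\times\SO(2)}=\cX\oplus\cY\oplus\cZ$ with $\cY=\cD_{\SO(2)\times\SO(2)}\cap\cF$ and with $\cX,\cZ$ nontrivial (indeed infinite-dimensional) subspaces consisting of fields of the form $\frac{u}{r}(x_1,x_2,0)$ and $u(0,0,1)$ respectively. For a concrete counterexample, take any nonzero $\cO(2)\times\cO(2)$-invariant scalar $\psi\in H^1(\mbS^3)$ and set $\UU(x):=\vphi(x)\psi(\pi^{-1}(x))(0,0,1)$; this lies in $\cD_{\SO(2)\times\SO(2)}$ (check Definition \ref{def:sym} with $\widetilde{g_1}e_3=e_3$) but visibly not in $\cF$.

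Your proposed mechanism for the inclusion also breaks at both steps. First, there is no $g=(I_2,g_2)\in\SO(2)\times\SO(2)$ with $\pi(g\pi^{-1}(x))=Rx$ for $R=\mathrm{diag}(1,1,-1)$: fixing the first two coordinates of $\pi(g\pi^{-1}(x))$ forces the fourth sphere coordinate to be unchanged, hence $g_2=I_2$; the reflection $(\xi_3,\xi_4)\mapsto(-\xi_3,\xi_4)$ that would produce $R$ lies in $\cO(2)\setminus\SO(2)$. Second, even if you had $\UU(Rx)=\UU(x)$, this together with $\SO$-equivariance does \emph{not} give $\cS(\UU)=\UU$: for $\UU=u(x)(0,0,1)$ with $u$ even in $x_3$ one has $\UU(Rx)=\UU(x)$ but $\cS(\UU)=-\UU$.

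The paper's proof bypasses the scalar correspondence of Section \ref{sec:vector} entirely. It lifts the \emph{vector} field to the sphere via the conformal isometry $\VV(\xi):=\UU(\pi(\xi))/\vphi(\pi(\xi))\in H^1(\mbS^3,\R^3)$, observes that Definition \ref{def:sym} is exactly the statement $\VV(g\xi)=\widetilde{g_1}\VV(\xi)$, and then takes the Euclidean norm: $|\VV|\in H^1(\mbS^3)$ is a genuine $\cO(2)\times\cO(2)$-invariant \emph{scalar} function, to which \cite[Lemma 5]{Ding} applies directly. Weak convergence $\UU_n\rightharpoonup 0$ transfers to $|\VV_n|\rightharpoonup 0$, hence $|\VV_n|\to 0$ in $L^6(\mbS^3)$, hence $\UU_n\to 0$ in $L^6(\R^3,\R^3)$. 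The point is that passing to $|\VV|$ kills the equivariance twist without ever needing the field to be of the special form \eqref{eq:formulauU}.
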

\begin{proof}
	For every $\UU\in\cD_{\SO(2)\times\SO(2)}$ define $\VV(\xi):=\frac{\UU(\pi(\xi))}{\vphi(\pi(\xi))}$ for $\xi\in\mathbb{S}^3\setminus\{Q\}$. We note that $\VV\in H^1(\mathbb{S}^3,\R^3)$ and  similarly as in \cite[Lemma 3.1]{ClPis} $|\nabla \UU|_2=\|\VV\|_{H^1(\mathbb{S}^3,\R^3)}$ and $|\UU|_{6}=|\VV|_{6}$,
	where 
	$$\|\VV\|^2_{H^1(\mathbb{S}^3,\R^3)}=\int_{\mathbb{S}^3}|\nabla_g \VV|^2+\frac34|\VV|^2\,dV_g$$ is the norm in 
	$H^1(\mathbb{S}^3,\R^3)$ and $\nabla_g$ is the gradient on $\mathbb{S}^3$.
	Therefore $\UU\mapsto\VV$ is a linear isometric isomorphism between $\cD^{1,2}(\R^3,\R^3)$ and $H^1(\mathbb{S}^3,\R^3)$ and between $L^6(\R^3,\R^3)$ and $L^6(\mbS^3,\R^3)$. Note that, since $\UU$ is $\SO(2)\times\SO(2)$-symmetric, then $\VV(g\xi)=\widetilde{g_1}\VV(\xi)$ for every $g=(g_1,g_2)\in\SO(2)\times\SO(2)$ and, consequently, $|\VV|$ is $\SO(2)\times\SO(2)$-invariant, or equivalently $\cO(2)\times\cO(2)$-invariant. 
	
	Let $(\UU_n)\subset\cD_{\SO(2)\times\SO(2)}$ such that $\UU_n\rightharpoonup0$ in $\cD_{\SO(2)\times\SO(2)}$. Then $\VV_n\rightharpoonup0$ in $H^1(\mathbb{S}^3,\R^3)$ and, up to a subsequence, $\VV\to0$ a.e. in $\mathbb{S}^3$; this implies that $|\VV_n|\rightharpoonup0$ in $H^1(\mathbb{S}^3)$ and so, in view of \cite[Lemma 5]{Ding}, $|\VV_n|\to0$ in $L^6(\mathbb{S}^3)$. Hence $\VV_n\to0$ in $L^6(\mathbb{S}^3,\R^3)$ and so $\UU_n\to0$ in $L^6(\R^3,\R^3)$.
\end{proof}

For $\UU\in\cD^{1,2}(\R^3,\R^3)$ recall from the proof of Proposition \ref{Density} the definition of $\UU_\rho$, $\UU_\tau$, and $\UU_\zeta$.

\begin{Lem}\label{lem:decom}
	If $\UU\in\cD_{\SO(2)\times\SO(2)}$, then $\UU_\rho,\UU_\tau,\UU_\zeta\in\cD_{\SO(2)\times\SO(2)}$.
\end{Lem}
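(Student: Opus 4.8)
The plan is to reduce everything to one geometric fact — that the conformal $\SO(2)\times\SO(2)$-action carries the cylindrical frame equivariantly into itself — and then verify the identity of Definition~\ref{def:sym} for each of $\UU_\rho,\UU_\tau,\UU_\zeta$ by a direct computation. First, by Lemma~\ref{equisym} the field $\UU$ is $\SO$-equivariant, so, exactly as recalled in the proof of Proposition~\ref{Density} (cf.\ \cite[Lemma~1]{AzzBenDApFor}), the three pieces are well defined and belong to $\cD^{1,2}(\R^3,\R^3)$: one approximates $\UU$ by smooth $\SO$-equivariant fields via Lemma~\ref{OApprox} and uses the pointwise orthogonality $|\nabla\UU_\rho|^2+|\nabla\UU_\tau|^2+|\nabla\UU_\zeta|^2=|\nabla\UU|^2$, which makes each of the three projections a contraction for the $\cD^{1,2}$-norm. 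Introducing the orthonormal cylindrical frame $e_\rho(x)=r^{-1}(x_1,x_2,0)$, $e_\tau(x)=r^{-1}(-x_2,x_1,0)$, $e_\zeta=e_3$ (defined for a.e.\ $x\in\R^3_*$), one has $\UU_\star(x)=\langle\UU(x),e_\star(x)\rangle\,e_\star(x)$ for $\star\in\{\rho,\tau,\zeta\}$; hence it suffices to show that each $\UU_\star$ is $\SO(2)\times\SO(2)$-symmetric.

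Set $\wt\gamma_g(x):=\pi\bigl(g\,\pi^{-1}(x)\bigr)$ for $g=(g_1,g_2)\in\SO(2)\times\SO(2)$ and write $y:=\wt\gamma_g(x)$. From the explicit formulas for $\pi^{-1}$ and $\pi$: the first two coordinates of $\pi^{-1}(x)$ are $\tfrac{2}{|x|^2+1}(x_1,x_2)$, applying $g$ multiplies them by $g_1$, and applying $\pi$ divides the first three coordinates by $1-\bigl(g\pi^{-1}(x)\bigr)_4$, which is strictly positive for a.e.\ $x$ (it vanishes only where $g\pi^{-1}(x)=Q$). Thus $(y_1,y_2)=c\,g_1(x_1,x_2)$ for some $c=c_g(x)>0$, so $y\in\R^3_*$ for a.e.\ $x\in\R^3_*$, and dividing by $|g_1(x_1,x_2)|=r$ gives $e_\rho(y)=\wt{g_1}e_\rho(x)$. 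Since $\SO(2)$ is abelian, $g_1$ commutes with $J:=\bigl(\begin{smallmatrix}0&-1\\1&0\end{smallmatrix}\bigr)\in\SO(2)$, hence $\wt{g_1}$ commutes with $\widetilde J:=\bigl(\begin{smallmatrix}J&0\\0&1\end{smallmatrix}\bigr)$; as $e_\tau=\widetilde J e_\rho$ this yields $e_\tau(y)=\wt{g_1}e_\tau(x)$, while $e_\zeta(y)=e_3=\wt{g_1}e_\zeta(x)$ trivially.

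Finally, fix $\star$ and use that $\UU$ is $\SO(2)\times\SO(2)$-symmetric, i.e.\ $\UU(y)=\tfrac{\vphi(y)}{\vphi(x)}\wt{g_1}\UU(x)$ for a.e.\ $x$, together with the frame identities just obtained and the orthogonality of $\wt{g_1}$:
\[
\UU_\star(y)=\langle\UU(y),e_\star(y)\rangle e_\star(y)=\frac{\vphi(y)}{\vphi(x)}\langle\wt{g_1}\UU(x),\wt{g_1}e_\star(x)\rangle\wt{g_1}e_\star(x)=\frac{\vphi(y)}{\vphi(x)}\wt{g_1}\UU_\star(x),
\]
which is precisely the defining identity of Definition~\ref{def:sym} for $\UU_\star$; combined with the first paragraph this gives $\UU_\rho,\UU_\tau,\UU_\zeta\in\cD_{\SO(2)\times\SO(2)}$. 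The only step that requires care is the second paragraph — pinning down the positive scalar $c_g(x)$ from the composition $\pi\circ g\circ\pi^{-1}$ and handling the null set where $\wt\gamma_g$ is undefined or sends a point onto the axis $\{0\}\times\{0\}\times\R$; everything afterwards is formal. A smaller point not to be glossed over is the claim in the first paragraph that the three pieces genuinely lie in $\cD^{1,2}(\R^3,\R^3)$ rather than merely in a space of finite Dirichlet energy, which is exactly why the smooth equivariant approximation of Lemma~\ref{OApprox} is invoked.
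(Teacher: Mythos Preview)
Your proof is correct and rests on the same computation as the paper's, but you organize it more conceptually. The paper verifies the defining identity of Definition~\ref{def:sym} for $\UU_\tau$ by a direct coordinate calculation of both sides of \eqref{eq:tau}, then says the same works for $\UU_\rho$, and obtains $\UU_\zeta$ by subtraction. You instead isolate the single geometric fact driving everything --- that $(y_1,y_2)=c_g(x)\,g_1(x_1,x_2)$ with $c_g(x)>0$, whence the cylindrical frame satisfies $e_\star\bigl(\wt\gamma_g(x)\bigr)=\wt{g_1}\,e_\star(x)$ for each $\star\in\{\rho,\tau,\zeta\}$ --- and then treat all three components uniformly via $\UU_\star=\langle\UU,e_\star\rangle e_\star$ and the orthogonality of $\wt{g_1}$. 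This buys a cleaner, symmetry-driven argument and avoids repeating the calculation three times; the paper's explicit coordinate check, in turn, makes the positivity of $c_g(x)$ and the cancellation leading to \eqref{eq:tau} fully transparent without appealing to the abelianness of $\SO(2)$. Your opening paragraph on $\cD^{1,2}$-membership via Lemma~\ref{OApprox} and the pointwise orthogonality of the gradients is a point the paper takes for granted from \cite[Lemma~1]{AzzBenDApFor}, so including it does no harm.
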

\begin{proof}
	We begin proving that $\UU_\tau\in\cD_{\SO(2)\times\SO(2)}$. For every matrix $A$, let $A^T$ denote its transpose.
	
	Let $\alpha_i\in\R$, $g_i=\bigl(\begin{smallmatrix}
		\cos\alpha_i & -\sin\alpha_i\\
		\sin\alpha_i & \cos\alpha_i
	\end{smallmatrix}\bigr)\in\SO(2)$, $i=1,2$, and set $g=\bigl(\begin{smallmatrix}
		g_1 & 0\\
		0 & g_2
	\end{smallmatrix}\bigr)$. We want to prove that
	\begin{equation}\label{eq:tau}
		\frac{\varphi(x)}{\varphi\Bigl(\pi\bigl(g\pi^{-1}(x)\bigr)\Bigr)}\UU_\tau\Bigl(\pi\bigl(g\pi^{-1}(x)\bigr)\Bigr)=\widetilde{g_1}\UU_\tau(x)
	\end{equation}
	provided
	\[
	\frac{\varphi(x)}{\varphi\Bigl(\pi\bigl(g\pi^{-1}(x)\bigr)\Bigr)}\UU\Bigl(\pi\bigl(g\pi^{-1}(x)\bigr)\Bigr)=\widetilde{g_1}\UU(x).
	\]
	
	We compute the two sides of \eqref{eq:tau} separately. We use the convention that $\R^3=\R^{3\times1}$ and treat the scalar product in $\R^3$ as matrix multiplication.
	
	As for the right-hand side we have
	\[\begin{split}
		\widetilde{g_1}\UU_\tau(x)&=\frac{\widetilde{g_1}\Bigl(\begin{smallmatrix}
				-x_2\\
				x_1\\
				0
			\end{smallmatrix}\Bigr)\UU^T(x)\Bigl(\begin{smallmatrix}
				-x_2\\
				x_1\\
				0
			\end{smallmatrix}\Bigr)}{x_1^2+x_2^2}=\frac{\Bigl(\begin{smallmatrix}
				-x_2\cos\alpha_1-x_1\sin\alpha_1\\
				-x_2\sin\alpha_1+x_1\cos\alpha_1\\
				0
			\end{smallmatrix}\Bigr)\UU^T(x)\Bigl(\begin{smallmatrix}
				-x_2\\
				x_1\\
				0
			\end{smallmatrix}\Bigr)}{x_1^2+x_2^2}\\
		&=\frac{-x_2\UU_1(x)+x_1\UU_2(x)}{x_1^2+x_2^2}\Bigl(\begin{smallmatrix}
			-x_2\cos\alpha_1-x_1\sin\alpha_1\\
			-x_2\sin\alpha_1+x_1\cos\alpha_1\\
			0
		\end{smallmatrix}\Bigr).
	\end{split}\]
	
	Let us write $\pi=\Bigl(\begin{smallmatrix}
		\pi_1\\
		\pi_2\\
		\pi_3
	\end{smallmatrix}\Bigr)$. As for the left-hand side we have
	\[\begin{split}
		\frac{\varphi(x)}{\varphi\Bigl(\pi\bigl(g\pi^{-1}(x)\bigr)\Bigr)}\UU_\tau\Bigl(\pi\bigl(g\pi^{-1}(x)\bigr)\Bigr)\\
		=\frac{\varphi(x)}{\varphi\Bigl(\pi\bigl(g\pi^{-1}(x)\bigr)\Bigr)}\frac{\biggl(\begin{smallmatrix}
				-\pi_2(g\pi^{-1}(x))\\
				\pi_1(g\pi^{-1}(x))\\
				0
			\end{smallmatrix}\biggr)\UU^T\Bigl(\pi\bigl(g\pi^{-1}(x)\bigr)\Bigr)\biggl(\begin{smallmatrix}
				-\pi_2(g\pi^{-1}(x))\\
				\pi_1(g\pi^{-1}(x))\\
				0
			\end{smallmatrix}\biggr)}{\pi_1^2\bigl(g\pi^{-1}(x)\bigr)+\pi_2^2\bigl(g\pi^{-1}(x)\bigr)}\\
		=\frac{\biggl(\begin{smallmatrix}
				-\pi_2(g\pi^{-1}(x))\\
				\pi_1(g\pi^{-1}(x))\\
				0
			\end{smallmatrix}\biggr)\UU^T(x)\widetilde{g_1}^T\biggl(\begin{smallmatrix}
				-\pi_2(g\pi^{-1}(x))\\
				\pi_1(g\pi^{-1}(x))\\
				0
			\end{smallmatrix}\biggr)}{\pi_1^2\bigl(g\pi^{-1}(x)\bigr)+\pi_2^2\bigl(g\pi^{-1}(x)\bigr)}.
	\end{split}\]
	
	Let us compute
	\[
	g\pi^{-1}(x)=\frac{1}{|x|^2+1}\Biggl(\begin{smallmatrix}
		2x_1\cos\alpha_1-2x_2\sin\alpha_1\\
		2x_1\sin\alpha_1+2x_2\cos\alpha_1\\
		2x_3\cos\alpha_2-(|x|^2-1)\sin\alpha_2\\
		2x_3\sin\alpha_2+(|x|^2-1)\cos\alpha_2
	\end{smallmatrix}\Biggr),
	\]
	\[
	\pi\bigl(g\pi^{-1}(x)\bigr)=\frac{1}{|x|^2+1-2x_3\sin\alpha_2-(|x|^2-1)\cos\alpha_2}\biggl(\begin{smallmatrix}
		2x_1\cos\alpha_1-2x_2\sin\alpha_1\\
		2x_1\sin\alpha_1+2x_2\cos\alpha_1\\
		2x_3\cos\alpha_2-(|x|^2-1)\sin\alpha_2
	\end{smallmatrix}\biggr),
	\]
	\[
	\UU^T(x)\widetilde{g_1}^T=\biggl(\begin{smallmatrix}
		\UU_1(x)\cos\alpha_1-\UU_2(x)\sin\alpha_1\\
		\UU_1(x)\sin\alpha_1+\UU_2(x)\cos\alpha_1\\
		\UU_3(x)
	\end{smallmatrix}\biggr)^T,
	\]
	\[
	\UU^T(x)\widetilde{g_1}^T\biggl(\begin{smallmatrix}
		-\pi_2(g\pi^{-1}(x))\\
		\pi_1(g\pi^{-1}(x))
		\\
		0
	\end{smallmatrix}\biggr)=\frac{2\bigl(-x_2\UU_1(x)+x_1\UU_2(x)\bigr)}{|x|^2+1-2x_3\sin\alpha_2-(|x|^2-1)\cos\alpha_2},
	\]
	and
	\[
	\pi_1^2\bigl(g\pi^{-1}(x)\bigr)+\pi_2^2\bigl(g\pi^{-1}(x)\bigr)=\frac{4x_1^2+4x_2^2}{\bigl(|x|^2+1-2x_3\sin\alpha_2-(|x|^2+1)\cos\alpha_2\bigr)^2},
	\]
	so for the left-hand side we have
	\[
	\frac{\biggl(\begin{smallmatrix}
			-\pi_2(g\pi^{-1}(x))\\
			\pi_1(g\pi^{-1}(x))\\
			0
		\end{smallmatrix}\biggr)\UU^T(x)\widetilde{g_1}^T\biggl(\begin{smallmatrix}
			-\pi_2(g\pi^{-1}(x))\\
			\pi_1(g\pi^{-1}(x))\\
			0
		\end{smallmatrix}\biggr)}{\pi_1^2\bigl(g\pi^{-1}(x)\bigr)+\pi_2^2\bigl(g\pi^{-1}(x)\bigr)}=\frac{-x_2\UU_1(x)+x_1\UU_2(x)}{x_1^2+x_2^2}\Bigl(\begin{smallmatrix}
		-x_2\cos\alpha_1-x_1\sin\alpha_1\\
		-x_2\sin\alpha_1+x_1\cos\alpha_1\\
		0
	\end{smallmatrix}\Bigr)
	\]
	and \eqref{eq:tau} holds.
	
	Similar computations hold for $\UU_\rho$. Finally, $\UU_\zeta=\UU-\UU_\rho-\UU_\tau\in\cD_{\SO(2)\times\SO(2)}$.
\end{proof}

Note that $\cE|_{\DF}=L|_{\DF}$ in view of Lemma \ref{DivFree}, where $L\colon\cD^{1,2}(\R^3,\R^3)\to\R$ is defined as $$L(\UU):=\frac12\int_{\R^3}|\nabla\UU|^2\,dx-\frac{1}{6}\int_{\R^3}|\UU|^6\,dx.$$ We set $\cY:=\cD_{\SO(2)\times\SO(2)}\cap\cF$.

\begin{Lem}\label{lem:inf}
	$\cY$ is infinite dimensional.
\end{Lem}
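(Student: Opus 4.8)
The plan is to reduce the claim to a statement purely about scalar functions via the correspondence of Theorem \ref{ScalVec}, and then exhibit an explicit infinite-dimensional family of such scalars. Recall that $\cY=\cD_{\SO(2)\times\SO(2)}\cap\cF$, so every $\UU\in\cY$ is both $\SO$-equivariant (Lemma \ref{equisym}) and of the form \eqref{eq:formulauU}. By Lemma \ref{DivFree}, $\UU\in\cF\cap\cD^{1,2}=\DF$ corresponds to a scalar $u\in X_{\cO}$ through \eqref{eq:formulauU}; the map $\UU\mapsto u$ is linear and injective (indeed $|u|=|\UU|$ a.e.), so $\dim\cY=\dim\{u\in X_{\cO}:\UU\text{ of the form \eqref{eq:formulauU} is }\SO(2)\times\SO(2)\text{-symmetric}\}$. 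Hence it suffices to identify, in terms of $u$, what $\SO(2)\times\SO(2)$-symmetry of $\UU$ means and to show the resulting space of scalars is infinite dimensional.

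First I would translate Definition \ref{def:sym} into a condition on $u$. For $\UU$ of the form \eqref{eq:formulauU} with $\cO$-invariant profile $b$ (so $u=b$ up to the sign/normalisation conventions), the identity $g\pi^{-1}(x)=\pi^{-1}(\wt{g_1}x)$ used in Lemma \ref{equisym} handles the first factor $g_1$ and produces exactly the $\SO$-equivariance already built into $\cF$; what remains is the action of the second factor $g_2=(I_2,g_2)$. A direct computation with the stereographic formulas — exactly of the type carried out in the proof of Lemma \ref{lem:decom} — shows that for $g=(I_2,g_2)$ the vector-field symmetry reduces to the scalar identity $\vphi(x)^{-1}u\bigl(\pi(g\pi^{-1}(x))\bigr)=\vphi\bigl(\pi(g\pi^{-1}(x))\bigr)^{-1}u(x)$, i.e. $u$ is invariant under the conformal action of the rotation subgroup $\{I_2\}\times\SO(2)\subset\SO(4)$ acting on $\mbS^3$, in addition to being $\cO(2)$-invariant in the first two variables. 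Equivalently, passing to the sphere via the isometry $u\mapsto v$, $v(\xi)=u(\pi(\xi))/\vphi(\pi(\xi))$ from Lemma \ref{compact}, the space is $\{v\in H^1(\mbS^3):v\text{ is }\SO(2)\times\SO(2)\text{-invariant}\}$; the corresponding $\UU$ automatically lies in $\cF$ because the first rotation block already acts trivially on a function invariant under $\cO(2)\times\{I_2\}$.

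Next I would show this invariant space is infinite dimensional, which is elementary: the group $G:=\SO(2)\times\SO(2)$ acting on $\mbS^3$ has a fixed-point-free part but the quotient $\mbS^3/G$ is (a segment), and more concretely one can restrict the standard eigenfunction decomposition of the Laplace–Beltrami operator on $\mbS^3$. The spaces of spherical harmonics of degree $k$ on $\mbS^3$ have dimension $(k+1)^2\to\infty$, and the $G$-invariant subspace of each is nonzero for infinitely many $k$: for instance, writing $\mbS^3=\{(\eta,\zeta)\in\R^2\times\R^2:|\eta|^2+|\zeta|^2=1\}$, the functions depending only on $t=|\eta|^2-|\zeta|^2\in[-1,1]$ (pulled back from Jacobi-type polynomials in $t$) are manifestly $\SO(2)\times\SO(2)$-invariant, belong to $H^1(\mbS^3)$, and give infinitely many linearly independent elements. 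Pulling these back by the isometry to $X_{\cO}$ and then by \eqref{eq:formulauU} to $\cD^{1,2}(\R^3,\R^3)$ yields infinitely many linearly independent vector fields in $\cD_{\SO(2)\times\SO(2)}\cap\cF=\cY$.

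The main obstacle I anticipate is the bookkeeping in the second step: verifying cleanly that $\SO(2)\times\SO(2)$-symmetry of $\UU$ in the sense of Definition \ref{def:sym} is equivalent, for fields of the form \eqref{eq:formulauU}, to $\cO(2)\times\cO(2)$-invariance of the scalar profile on $\mbS^3$ — i.e. that the ``$\wt{g_1}$'' twist on the left of Definition \ref{def:sym} is exactly absorbed by the $\cF$-ansatz and contributes no further constraint, while the $g_2$-factor does act genuinely. This is conceptually the same computation as the one in Lemma \ref{lem:decom} (where $\UU_\tau$ was shown to inherit the symmetry) together with the intertwining $g\pi^{-1}(x)=\pi^{-1}(\wt{g_1}x)$ from Lemma \ref{equisym}, so no new idea is needed, but it must be written carefully. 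Once that equivalence is in place, infinite-dimensionality is immediate from the spectral theory on $\mbS^3$.
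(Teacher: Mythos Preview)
Your route is genuinely different from the paper's. The paper never constructs elements of $\cY$ explicitly: it shows, via Lemma \ref{lem:decom}, that $\cD_{\SO(2)\times\SO(2)}=\cX\oplus\cY\oplus\cZ$ (the $\rho$-, $\tau$- and $\zeta$-components), builds explicit linear isomorphisms $\cX\cong\cY$ and $\cX\cong\cZ$, and then uses only that the total space $\cD_{\SO(2)\times\SO(2)}$ is infinite dimensional. The advantage of this indirect argument is that infinite-dimensionality is easiest to see in $\cZ$, where the vector fields are $(0,0,w)$ and no singular direction vector $r^{-1}(-x_2,x_1,0)$ appears; the isomorphisms then transfer the conclusion to $\cY$ for free. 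Your approach, by contrast, produces concrete elements of $\cY$ via spherical harmonics, which is more explicit but forces you to confront the singularity of the ansatz \eqref{eq:formulauU} head-on.

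And that is where there is a real gap. Your identification ``the space is $\{v\in H^1(\mbS^3):v\text{ is }\SO(2)\times\SO(2)\text{-invariant}\}$'' is not correct: the scalar conformal isometry sends $H^1(\mbS^3)$ onto $\cD^{1,2}(\R^3)$, not onto $X$, and for $K=2$ these differ. Concretely, if $v$ depends only on $t=|\eta|^2-|\zeta|^2$ and $v(-1)\neq 0$, then the pullback $u=\vphi\cdot v\circ\pi^{-1}$ does not vanish on the axis $\{r=0\}$, so $u/r\notin L^2(\R^3)$, hence $u\notin X$ and the associated $\UU$ is not in $\cD^{1,2}(\R^3,\R^3)$ (equivalently, the vector lift $\VV(\xi)=v(\xi)|\xi_{12}|^{-1}(-\xi_2,\xi_1,0)$ fails to lie in $H^1(\mbS^3,\R^3)$ because of the $|\xi_{12}|^{-1}$ factor). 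The fix is easy---restrict to invariants that vanish on the circle $\{\xi_1=\xi_2=0\}$, e.g.\ take $v=(1+t)P(t)$ for polynomials $P$, which still gives an infinite-dimensional family---but as written your concrete examples and your stated equivalence do not yet give elements of $\cY$. Once you impose this vanishing, your argument goes through; this extra condition is exactly what the paper's isomorphism-with-$\cZ$ trick lets one avoid.
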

\begin{proof}
	Let $e=\bigl(\begin{smallmatrix}
		0 & -1\\
		1 & 0
	\end{smallmatrix}\bigr)\in\SO(2)$ and	
	\begin{eqnarray*}
		\cX&:=&\Big\{\UU\in \cD_{\SO(2)\times\SO(2)}: \UU(x_1,x_2,x_3)=\frac{u(x)}{r}\Bigl(\begin{smallmatrix}
			x_1\\
			x_2\\
			0
		\end{smallmatrix}\Bigr)\hbox{ for some }\cO\hbox{-invariant } u\colon\R^3\to\R\Big\},\\ \cZ&:=&\Big\{\UU\in \cD_{\SO(2)\times\SO(2)}:\UU(x_1,x_2,x_3)=u(x)\Bigl(\begin{smallmatrix}
			0\\
			0\\
			1
		\end{smallmatrix}\Bigr)\hbox{ for some }\cO\hbox{-invariant } u\colon\R^3\to\R\Big\}.
	\end{eqnarray*} 
	In order to prove that $\cY$ is infinite dimensional, we build an isomorphism between $\cX$ and $\cY$ and an isomorphism between $\cX$ and $\cZ$. The conclusion will follow from the fact that $\cD_{\SO(2)\times\SO(2)}$ is infinite dimensional and that, in view of Lemma \ref{lem:decom}, we get the following decomposition $\cD_{\SO(2)\times\SO(2)}=\cX\oplus\cY\oplus\cZ$.
	
	Indeed, for every $\UU\in\cX$ define $\widetilde{\UU}(x):=\UU(\widetilde{e}x)$. It is clear that $\widetilde{\UU}\in\cY$ and that $\UU\mapsto\widetilde{\UU}$ is an isomorphism.
	
	Now consider $\UU\in\cX$ and let $u\colon\R^3\to\R$ be $\cO$-invariant such that $\UU(x)=\frac{u(x)}{r}\Bigl(\begin{smallmatrix}
		x_1\\
		x_2\\
		0
	\end{smallmatrix}\Bigr)$. Define $\overline{\UU}(x):=u(x)\Bigl(\begin{smallmatrix}
		0\\
		0\\
		1
	\end{smallmatrix}\Bigr)$. By similar arguments to those used in the proof of Lemma \ref{DivFree} it is easy to check that $\overline{\UU}\in\cD^{1,2}(\R^3,\R^3)$. Finally, explicit computations show that $\overline{\UU}$ is $\SO(2)\times\SO(2)$-symmetric (hence $\overline{\UU}\in\cZ$) and of course $\UU\mapsto\overline{\UU}$ is an isomorphism.
\end{proof}

\begin{proof}[Proof of Theorem \ref{critV}]
	Lemma \ref{equisym} implies that $\cY\subset\cD_\cF$; moreover, $\cY$ is closed in $\cD^{1,2}(\R^3,\R^3)$ and infinite dimensional by Lemma \ref{lem:inf}. Since $\UU\mapsto\frac{\UU}{\vphi}\circ\pi$ is a linear isometry between $\cD^{1,2}(\R^3,\R^3)$ and $H^1(\mathbb{S}^3,\R^3)$ and between $L^6(\R^3,\R^3)$ and $L^6(\mbS^3,\R^3)$, one easily checks that $\cE|_{\DF}$ is invariant under the action of $\SO(2)\times\SO(2)$. Hence every $\UU\in\cY$ is a solution to \eqref{eq:crV} if and only if it is a critical point of $\cE|_\cY$.
	
	It is easy to see that there exists $\rho>0$ such that $\inf\{\cE(\UU):\UU\in\cY\text{ and }|\nabla\UU|_2=\rho\}>0$ and, in view of Lemma \ref{compact}, that $\cE|_\cY$ satisfies the Palais-Smale condition at every positive level.
	Let $E\subset\cY$ be a finite dimensional subspace. Then the norms $|\nabla(\cdot)|_2$ and $|\cdot|_6$ are equivalent in $E$. This implies that there exists $R=R(E)>0$ such that $\cE(\UU)\le0$ for every $\UU\in E$ with $|\UU|_6\ge R$. Hence the conclusion follows from \cite[Theorem 9.12]{Rabin} and the Palais principle of symmetric criticality \cite{Palais}. 
\end{proof}

\begin{proof}[Proof of Corollary \ref{critS}]
	The proof follows from Theorems \ref{critV} and  \ref{ScalVec}.
\end{proof}

	\section{The noncritical case}\label{sec:noncrit}
	
	In this section we prove Theorems \ref{ExMul} and \ref{ExMul2}. Throughout this section we assume $f$ satisfies (F1) and (F2). 
	The following lemma is proved in \cite[Proposition A.2]{MederskiZeroMass}.
	
	\begin{Lem}\label{Lions}
		Suppose that $(u_n)\subset \cD^{1,2}(\R^N)$ is bounded and $\cO$-invariant and for all $R>0$
		\begin{equation}\label{eq:LionsCond12}
		\lim_{n\to\infty}\sup_{z\in \R^{N-K}} \int_{B((0,z),R)} |u_n|^2\,dx=0.
		\end{equation}
		Then $$\int_{\R^N} \Phi(u_n)\, dx\to 0\quad\hbox{as } n\to\infty$$ for any continuous function $\Phi\colon\R\to [0,\infty)$ such that
		\begin{equation}\label{eq:LionsPhi}
		\displaystyle\lim_{s\to 0}\frac{\Phi(s)}{|s|^{2^*}}=\lim_{|s|\to\infty}\frac{\Phi(s)}{|s|^{2^*}}=0.
		\end{equation}
	\end{Lem}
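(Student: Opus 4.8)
The plan is to reduce the statement to the classical Lions vanishing lemma applied on each "fiber" $\R^K\times Q$, where $Q$ ranges over a lattice of unit cubes in $\R^{N-K}$, and then to sum up using the $\cO$-invariance to gain integrability in the radial variable $r=|y|$. First I would fix the continuous function $\Phi$ satisfying \eqref{eq:LionsPhi} and, given $\eps>0$, choose $\de>0$ and $M>0$ so that $\Phi(s)\le\eps(|s|^2+|s|^{2^*})$ for $|s|\le\de$ and for $|s|\ge M$; on the compact middle range $\de\le|s|\le M$ one has $|\Phi(s)|\le C_\eps|s|^2$ as well, so in fact $\Phi(s)\le\eps|s|^{2^*}+C_\eps|s|^2$ for all $s\in\R$, with $C_\eps$ depending only on $\eps$ and $\Phi$. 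Integrating, $\int_{\R^N}\Phi(u_n)\,dx\le\eps|u_n|_{2^*}^{2^*}+C_\eps|u_n|_2^2$; since $(u_n)$ is bounded in $\cD^{1,2}(\R^N)$, the first term is bounded by $\eps$ times a constant, so it suffices to prove $|u_n|_2\to0$, i.e. that the hypothesis \eqref{eq:LionsCond12} forces the full $L^2$-norm to vanish.

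The key point — and the only place where $\cO$-invariance enters — is that the local $L^2$-mass near the singular subspace $\{0\}\times\R^{N-K}$ is automatically controlled. Indeed, if $x=(y,z)$ with $|y|=r$ large, then the orbit of $x$ under $\SO(K)\times\{I_{N-K}\}$ is a $(K-1)$-sphere of radius $r$; covering this sphere by $\sim r^{K-1}$ balls of radius $1$ and using that $u_n$ is constant on the orbit, one gets for the slab $\{R\le|y|\le R+1\}\times(z_0+[0,1]^{N-K})$ an estimate of the form
\begin{equation*}
\int_{\{R\le|y|\le R+1\}\times(z_0+[0,1]^{N-K})}|u_n|^2\,dx\le C\,R^{K-1}\sup_{z\in\R^{N-K}}\int_{B((0,z),c)}|u_n|^2\,dx,
\end{equation*}
whereas the $\cD^{1,2}$-bound together with Sobolev gives $\int_{\{R\le|y|\le R+1\}\times(z_0+[0,1]^{N-K})}|u_n|^2\,dx\le C\big(\int_{\text{same slab}}|u_n|^{2^*}\,dx\big)^{2/2^*}$, and summing the latter over the $z_0\in\Z^{N-K}$ lattice and over $R\in\N$ shows the total $L^2$-mass outside a fixed cylinder $\{|y|\le R_0\}$ is small once $R_0$ is large — here one interpolates $|u_n|_2$ on each cube between $|u_n|_{2^*}$ (bounded) and the vanishing sup, exactly as in Lions' lemma, but the radial weight $R^{K-1}$ is harmless because it is beaten by the decay of the $L^{2^*}$-tails. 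On the compact cylinder $\{|y|\le R_0,\ |z|\le R_1\}$ one covers by finitely many unit balls and uses \eqref{eq:LionsCond12} directly; on $\{|y|\le R_0,\ |z|\ge R_1\}$ one again tiles by lattice cubes and interpolates, the number of relevant cubes being finite in the $y$-directions. Combining the three regions and letting first $n\to\infty$, then $R_0,R_1\to\infty$, yields $|u_n|_2\to0$.

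The main obstacle I expect is the bookkeeping in the cylindrical region near $r=0$: one must make sure that the factor $r^{K-1}$ coming from the volume of the $\SO(K)$-orbits does not destroy the smallness, which is why it is essential to pair it with the \emph{supercritical-in-$L^2$} decay of the $L^{2^*}$-tails (i.e. to interpolate $\|u_n\|_{L^2(Q)}\le\|u_n\|_{L^{2^*}(Q)}^{\theta}\,\big(\sup_z\int_{B((0,z),c)}|u_n|^2\big)^{(1-\theta)/2}$ on each cube $Q$ with $\theta\in(0,1)$ and then sum a convergent series) rather than trying to bound things crudely. Since the reference \cite[Proposition A.2]{MederskiZeroMass} carries out precisely this argument, I would invoke it and only indicate the reduction from a general $\Phi$ to the $L^2$-decay statement as above, noting that \eqref{eq:LionsCond12} plus boundedness in $\cD^{1,2}(\R^N)$ is exactly the hypothesis needed there.
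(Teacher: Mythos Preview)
The paper does not prove this lemma; it simply records that it is \cite[Proposition~A.2]{MederskiZeroMass}, and at the end you propose to invoke the same reference. On that level your proposal agrees with the paper. The difficulty is with the sketch you give before the citation.

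Your reduction ``it suffices to prove $|u_n|_2\to 0$'' is not valid: a bounded sequence in $\cD^{1,2}(\R^N)$ need not lie in $L^2(\R^N)$, and even when it does the $L^2$-norm need not vanish under \eqref{eq:LionsCond12}. For a concrete counterexample with $N=3$, $K=2$, take $u_n(x)=n^{-1/2}u(x/n)$ for a fixed $\cO$-invariant $u\in\cC_0^\infty(\R^3)$; then $|\nabla u_n|_2$ and $|u_n|_6$ are independent of $n$, one checks directly that $\sup_{z}\int_{B((0,z),R)}|u_n|^2=O(n^{-1})$, yet $|u_n|_2^2=n^2|u|_2^2\to\infty$. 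Your inequality $\int\Phi(u_n)\le\eps|u_n|_{2^*}^{2^*}+C_\eps|u_n|_2^2$ is therefore vacuous, although the conclusion of the lemma does hold for this sequence (because $|u_n|_\infty\to 0$). The displayed slab estimate in your second paragraph is also misstated: the orbit argument compares $\int_{B((y,z),1)}|u_n|^2$ for large $|y|$ with integrals over balls on the \emph{same} orbit (i.e., away from the axis), not with $\sup_z\int_{B((0,z),c)}|u_n|^2$; what it actually yields, after H\"older and the disjointness of $\sim|y|^{K-1}$ rotated balls, is a uniform decay $\int_{B((y,z),1)}|u_n|^2\lesssim|y|^{-2(K-1)/2^*}$. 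Combined with \eqref{eq:LionsCond12} for bounded $|y|$, this upgrades the hypothesis to full Lions vanishing $\sup_{x\in\R^N}\int_{B(x,1)}|u_n|^2\to 0$; from there one interpolates on each unit cube and sums the $L^{2^*}$-masses (globally finite because $(u_n)$ is bounded in $\cD^{1,2}$), not the $L^2$-masses. Your instinct to use the orbit structure and a cube-by-cube interpolation is right, but the summable quantity must be $\int_Q|u_n|^{2^*}$, and the global $|u_n|_2$ plays no role.
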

	
We need the following results as well. %{\color{magenta}\sout{The proof is inspired by }\cite[Proposition 2.17]{BadPisRol}.}
	
\begin{Lem}\label{pqDec}
	Let $1\le p\le 2^*\le q<\infty$. If $u\in L^{2^*}(\R^N)$, then 
	$$|u\chi_{\{|u|\le 1\}}|_q^q,\;|u\chi_{\{|u|>1\}}|_p^p,\;|\{|u|>1\}|\le|u|_{2^*}^{2^*},$$
	where  $\chi$ denotes the characteristic function and $|\cdot|$ stands for the Lebesgue measure.
\end{Lem}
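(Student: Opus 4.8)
The plan is to reduce each of the three estimates to an elementary pointwise comparison between powers of $|u|$ on the relevant super/sublevel set, and then integrate; no compactness, density, or symmetry is involved.

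First I would handle $|u\chi_{\{|u|\le 1\}}|_q^q$. On the set $\{|u|\le 1\}$ the function $t\mapsto|u(x)|^t$ is nonincreasing in $t\ge 0$, so from $q\ge 2^*$ we get $|u(x)|^q\le|u(x)|^{2^*}$ for a.e.\ $x$ with $|u(x)|\le 1$. Integrating this over $\{|u|\le 1\}$ and then enlarging the domain of integration to all of $\R^N$ gives $|u\chi_{\{|u|\le 1\}}|_q^q\le|u|_{2^*}^{2^*}$, which is finite precisely because $u\in L^{2^*}(\R^N)$. Symmetrically, on $\{|u|>1\}$ the function $t\mapsto|u(x)|^t$ is nondecreasing, so $p\le 2^*$ yields $|u(x)|^p\le|u(x)|^{2^*}$ there, and the same integration argument gives $|u\chi_{\{|u|>1\}}|_p^p\le|u|_{2^*}^{2^*}$.

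Finally, the measure bound is just Chebyshev's inequality: on $\{|u|>1\}$ one has $1<|u(x)|^{2^*}$, hence $|\{|u|>1\}|=\int_{\{|u|>1\}}1\,dx\le\int_{\{|u|>1\}}|u|^{2^*}\,dx\le|u|_{2^*}^{2^*}$. There is no genuine obstacle in this lemma; the only point worth keeping in mind is that the hypothesis $u\in L^{2^*}(\R^N)$ is exactly what makes the common upper bound $|u|_{2^*}^{2^*}$ finite, so that all three inequalities are meaningful. The statement will be used later to split a subcritical nonlinearity into a piece with $L^q$-type control near the origin and a piece with $L^p$-type control at infinity, both dominated by the critical norm.
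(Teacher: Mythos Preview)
Your proof is correct and essentially matches the paper's. The only difference is in the bound for $|u\chi_{\{|u|>1\}}|_p^p$: the paper uses H\"older's inequality together with the measure bound $|\{|u|>1\}|\le|u|_{2^*}^{2^*}$ to get $|u|_{2^*}^p|\{|u|>1\}|^{(2^*-p)/2^*}\le|u|_{2^*}^{2^*}$, whereas you use the direct pointwise comparison $|u|^p\le|u|^{2^*}$ on $\{|u|>1\}$, which is slightly more elementary and avoids the detour through the measure estimate.
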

\begin{proof}
	Clearly 
	$$\int_{\R^N}|u|^q\chi_{\{|u|\le 1\}}\,dx\le\int_{\R^N}|u|^{2^*}\chi_{\{|u|\le1\}}\,dx\le|u|_{2^*}^{2^*}.$$
	Moreover, we have that $$|\{|u|>1\}|=\int_{\{|u|>1\}}1\,dx\le\int_{\{|u|>1\}}|u|^{2^*}\,dx\le|u|_{2^*}^{2^*}$$ and so $$\int_{\R^N}|u|^p\chi_{\{|u|>1\}}\,dx\le|u|_{2^*}^p|\{|u|>1\}|^\frac{2^*-p}{2^*}\le|u|_{2^*}^p|u|_{2^*}^{2^*-p}=|u|_{2^*}^{2^*}.\qedhere$$
\end{proof}

\begin{Lem}\label{Lions2}
	Suppose that $(u_n),(v_n)\subset \cD^{1,2}(\R^N)$ are bounded and $\cO$-invariant and  $(u_n)$ satisfies \eqref{eq:LionsCond12} for all $R>0$. Then $$\int_{\R^N}|f(x,v_n)u_n|\,dx\to 0$$ as $n\to\infty$.
\end{Lem}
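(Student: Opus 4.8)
The plan is to reduce the claim to Lemma \ref{Lions} by dominating $|f(x,v_n)u_n|$ pointwise by expressions to which that lemma (or H\"older's inequality together with boundedness) applies. First I would use (F2) to fix, for arbitrary $\eps>0$, a constant $C_\eps>0$ with $|f(x,u)|\le\eps|u|^{2^*-1}+C_\eps\,\gamma_\eps(u)$ for a.e. $x$ and all $u$, where $\gamma_\eps$ is a suitable function that is $o(|u|^{2^*-1})$ at both $0$ and $\infty$; more concretely, writing $m:=2^*=\frac{2N}{N-2}$, I would split $f$ according to whether $|u|\le1$ or $|u|>1$ and observe that, up to the $\eps$-term, $|f(x,u)|$ is controlled by $C_\eps\big(|u|^{q-1}\chi_{\{|u|\le1\}}+|u|^{p-1}\chi_{\{|u|>1\}}\big)$ for any chosen exponents $1<p<m<q<\infty$. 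Then
\[
\int_{\R^N}|f(x,v_n)u_n|\,dx\le\eps\int_{\R^N}|v_n|^{m-1}|u_n|\,dx+C_\eps\int_{\R^N}\big(|v_n|^{q-1}\chi_{\{|v_n|\le1\}}+|v_n|^{p-1}\chi_{\{|v_n|>1\}}\big)|u_n|\,dx.
\]

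For the first term, H\"older's inequality with exponents $\frac{m}{m-1}$ and $m$ gives the bound $\eps\,|v_n|_m^{m-1}|u_n|_m$, which is $\le C\eps$ by the Sobolev embedding $\cD^{1,2}(\R^N)\hookrightarrow L^m(\R^N)$ and the assumed boundedness of $(u_n)$ and $(v_n)$. Since $\eps$ is arbitrary, it suffices to show the second (the $C_\eps$-) term tends to $0$ for each fixed $\eps$. Here I would apply Lemma \ref{pqDec}: with $p,q$ chosen as above it yields $|v_n\chi_{\{|v_n|\le1\}}|_q\le|v_n|_m^{m/q}$ and $|v_n\chi_{\{|v_n|>1\}}|_p\le|v_n|_m^{m/p}$, so both pieces of $v_n$ lie in bounded sets of $L^q$ and $L^p$ respectively, uniformly in $n$. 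The point is that these exponents are strictly on either side of the critical one, which is exactly the regime Lemma \ref{Lions} handles for $u_n$.

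To conclude I would interpolate. Fix $p$ slightly below and $q$ slightly above $m$, say $p=m-\de$, $q=m+\de$ for small $\de>0$. Since $(u_n)$ is bounded in $L^m$ and vanishes in the sense of \eqref{eq:LionsCond12}, Lemma \ref{Lions} (applied to $\Phi(s)=|s|^{m\pm\de}$, which satisfies \eqref{eq:LionsPhi}) gives $|u_n|_{L^{m-\de}(\text{loc})}\to0$ — more precisely, a standard consequence of that lemma is that $u_n\to0$ strongly in $L^s(\R^N)$ for every $s\in(2,2^*)$; combined with the uniform $L^{2^*}$-bound this also yields $u_n\to0$ in $L^s$ for $s$ slightly above $2^*$ after interpolating $L^{2^*}$ between a nearby subcritical and supercritical exponent only if such a supercritical bound is available, so I would instead keep $u_n$ in $L^m$ and move the $v_n$-factor to the complementary exponent. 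Concretely, for the first piece apply H\"older with exponents $\frac{q}{q-1}$ on $|v_n|^{q-1}\chi_{\{|v_n|\le1\}}$ and $q$ on $|u_n|$: the first factor is $\big(|v_n\chi_{\{|v_n|\le1\}}|_q\big)^{q-1}\le|v_n|_m^{m(q-1)/q}$, bounded, while $|u_n|_q\to0$ because $q>2^*=m\ge$ anything $>2$ — wait, that needs $u_n\to0$ in $L^q$ with $q>m$, which is not guaranteed. So instead I choose the H\"older split so that $u_n$ sits in an exponent strictly between $2$ and $2^*$: for the first piece write $|v_n|^{q-1}\chi_{\{|v_n|\le1\}}\cdot|u_n|$ and use exponents $s':=\frac{s}{s-1}$ on $|v_n|^{q-1}$ and $s$ on $|u_n|$ with $s\in(2,2^*)$ chosen so that $(q-1)s'\ge m$, i.e. $s$ close enough to $2$ (possible since $q-1>1$); then $|v_n|^{q-1}\chi_{\{|v_n|\le1\}}\le|v_n|^{(q-1)s'}\chi_{\{|v_n|\le1\}}^{1/s'}$-type estimates combined with Lemma \ref{pqDec} keep that factor bounded in $L^{s'}$, while $|u_n|_s\to0$ by Lemma \ref{Lions}. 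The second piece, with $|v_n|^{p-1}\chi_{\{|v_n|>1\}}$, is handled identically using that $p-1<m-1$ but $p-1>1$, choosing the complementary exponent for $u_n$ again in $(2,2^*)$.

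The main obstacle is precisely this bookkeeping of exponents: one must choose $p,q$ near $2^*$ and the H\"older exponents so that (i) each $v_n$-factor stays in a bounded subset of the relevant Lebesgue space — guaranteed by Lemma \ref{pqDec} and the Sobolev bound — and (ii) the $u_n$-factor lands in $L^s$ for some $s$ strictly between $2$ and $2^*$, so that Lemma \ref{Lions} forces it to $0$. Everything else is routine: the $\eps$-term is absorbed by Sobolev and boundedness, and then letting $\eps\to0$ finishes the argument.
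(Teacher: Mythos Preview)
Your argument has a genuine gap. You claim that ``$\Phi(s)=|s|^{m\pm\de}$ satisfies \eqref{eq:LionsPhi}'' and that ``a standard consequence of that lemma is that $u_n\to0$ strongly in $L^s(\R^N)$ for every $s\in(2,2^*)$.'' Neither is true in the present setting. Condition \eqref{eq:LionsPhi} requires $\Phi(s)/|s|^{2^*}\to0$ both as $s\to0$ \emph{and} as $|s|\to\infty$; for $\Phi(s)=|s|^{m-\de}$ the ratio blows up at $0$, and for $\Phi(s)=|s|^{m+\de}$ it blows up at $\infty$. No pure power works. More fundamentally, the sequences here are only bounded in $\cD^{1,2}(\R^N)$, not in $H^1(\R^N)$: there is no $L^2$-bound, so $u_n$ need not even belong to $L^s$ for $s<2^*$, let alone converge to $0$ there. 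The ``standard'' Lions consequence you invoke is the $H^1$-version and is unavailable.

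Because of this, every attempt in your proposal to place $|u_n|$ in an $L^s$-space with $s\ne2^*$ collapses, and you end up needing exactly the information you don't have. The fix---and this is what the paper does---is to split according to $|u_n|$, not only $|v_n|$. One builds a single function $\Phi(t)=\int_0^{|t|}\min\{s^{p-1},s^{q-1}\}\,ds$ (behaving like $|t|^q$ near $0$ and like $|t|^p$ at infinity, with $p<2^*<q$), which genuinely satisfies \eqref{eq:LionsPhi}; then $|u_n\chi_{\{|u_n|>1\}}|_p^p$ and $|u_n\chi_{\{|u_n|\le1\}}|_q^q$ are each controlled by $\int\Phi(u_n)\,dx\to0$. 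H\"older's inequality and Lemma~\ref{pqDec} handle the $v_n$-factors, which remain bounded. Your $\eps$-term and the basic decomposition via (F2) are fine; the missing idea is that the vanishing sequence $u_n$ must itself be truncated so that the resulting pieces live in Lebesgue spaces on the correct sides of $2^*$.
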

\begin{proof}
	Let $1<p<2^*<q<\infty$ and define $\Phi(t):=\int_0^{|t|}\min\{s^{p-1},s^{q-1}\}\,ds$. Note that $\Phi$ satisfies \eqref{eq:LionsPhi}. (F2) implies that for every $\varepsilon>0$ there exists $C_\varepsilon>0$ such that for every $t\in\R$ and a.e. $x\in\R^N$ we have $|f(x,t)|\le\varepsilon|t|^{2^*-1}+C_\varepsilon|\Phi'(t)|$. Moreover
	\[
	\int_{\R^N}|\Phi'(v_n)u_n|\,dx=\int_{\R^N}|\Phi'(v_n)u_n|\chi_{\{|u_n|>1\}}\,dx+\int_{\R^N}|\Phi'(v_n)u_n|\chi_{\{|u_n|\le 1\}}\,dx=:A_n+B_n.
	\]
	Concerning the first integral $A_n$, Lemmas \ref{Lions} and \ref{pqDec} imply that, for some $C_1,C_2>0$,
	\[\begin{split}
	A_n&= \int_{\R^N}|v_n|^{p-1}\chi_{\{|v_n|>1\}}|u_n|\chi_{\{|u_n|>1\}}\,dx+\int_{\R^N}|v_n|^{q-1}\chi_{\{|v_n|\le 1\}}|u_n|\chi_{\{|u_n|>1\}}\,dx\\
	& \le\bigl(|v_n\chi_{\{|v_n|>1\}}|_p^{p-1}+||v_n|^{q-1}\chi_{\{|v_n|\le 1\}}|_{\frac{p-1}{p}}\bigr)|u_n\chi_{\{|u_n|>1\}}|_p\\
	& \le C_1\Bigl(|v_n\chi_{\{|v_n|>1\}}|_p^{p-1}+|v_n\chi_{\{|v_n|\le 1\}}|_q^\frac{q(p-1)}{p}\Bigr)\biggl(\irn\Phi(u_n)\,dx\biggr)^\frac{1}{p}\\
	& \le C_2\sup_k\|v_k\|^\frac{2^*(p-1)}{p}\biggl(\irn\Phi(u_n)\,dx\biggr)^\frac{1}{p}\to0
	\end{split}\]
	because $\bigl(|v_n|^{q-1}\bigr)^{\frac{p}{p-1}}\chi_{\{|v_n|\le 1\}}\le|v_n|^q\chi_{\{|v_n|\le 1\}}$.
	
	Finally, similar computations hold for the second integral $B_n$.
\end{proof}

In order to prove Theorem \ref{ExMul} we aim to use the abstract critical point theory from \cite[Section 3]{MeScSz}, in particular Theorems 3.3 and 3.5(b) therein. We need to prove that assumptions (I1)--(I8), (G), and $(M)_0^\beta$ for every $\beta>0$, which in our setting read as follows, are satisfied. For simplicity, we set 
$$\cI(u):=\irn F(x,u)\,dx\quad\hbox{for }u\in X_{\cO}$$ and $$\cN:=\bigl\{u\in X_{\cO}\setminus\{0\}:\cJ'(u)u=0\bigr\}$$
stands for the {\em Nehari constraint}, which need not be a manifold of class $\cC^1$; see \cite{MeScSz}. We list the required conditions:

\begin{itemize}
	\item [(I1)] $\cI\in\cC^1(X_{\cO},\R)$ and $\cI(u)\ge\cI(0)=0$ for every $u\in X_{\cO}$.
	\item [(I2)] $\cI$ is sequentially lower semicontinuous.
	\item [(I3)] If $u_n\to u$ and $\cI(u_n)\to\cI(u)$, then $u_n\to u$.
	\item [(I4)] $\|u\|+\cI(u)\to\infty$ as $\|u\|\to\infty$.
	%\item [(I5)] $\cI(u)<\cI(u+v)$ for every $u\in X_{\cO}$ and every $v\in\emptyset$ (i.e., an empty condition).
	\item [(I6)] There exists $r>0$ such that $\displaystyle\inf_{u\in X_{\cO},\|u\|=r}\cJ(u)>0$.
	\item [(I7)] $\displaystyle\frac{\cI(u_n)}{t_n^2}\to\infty$ if $t_n\to\infty$ and $u_n\to u_0\ne0$ as $n\to\infty$.
	\item [(I8)] $\displaystyle\frac{t^2-1}{2}\cI'(u)u+\cI(u)-\cI(tu)\le0$ for every $u\in\cN$ and every $t\ge0$.
	\item [(G)] $\Z^{N-K}$ is a group that acts on $X_{\cO}$ by isometries and such that for every $u\in X_{\cO}$, $(\Z^{N-K}*u)\setminus\{u\}$ is bounded away from $u$.
\end{itemize}

The $\Z^{N-K}$-action is given as follows: $z\ast u(x):=u(x+(0,z))$ for $z\in \Z^{N-K}$ and $u\in X_{\cO}$. $\Z^{N-K}\ast u$ is called the {\em orbit} of $u$ and if, in addition, $u$ is a critical point of $\J$, then $\Z^{N-K}\ast u$ is a {\em critical orbit}.

Note that (I1)--(I4) and (G) are obviously satisfied when (F4) holds and (I6) follows easily from (F2) and the embedding of $X_{\cO}$ into $L^{2^*}(\R^N)$. We have skipped (I5) from \cite[Section 3]{MeScSz}, since it is an empty condition. (I7), (I8), and the following variant of Cerami condition will be verified in the next lemmas.

\begin{itemize}
	\item [$(M)_0^\beta$]
	\begin{itemize}
		\item [(a)] There exists $M_\beta>0$ such that $\limsup_n\|u_n\|\le M_\beta$ for every $(u_n)\subset X_{\cO}$ such that $0\le\liminf_n\cJ(u_n)\le\limsup_n\cJ(u_n)\le\beta$ and $\lim_n(1+\|u_n\|)\cJ'(u_n)=0$.
		\item [(b)] If $\cJ$ has finitely many critical orbits, then there exists $m_\beta>0$ such that, if $(u_n),(v_n) \subset X_{\cO}$ are as above and $\|u_n-v_n\|<m_\beta$ for $n$ large, then $\liminf_n\|u_n-v_n\|=0$.
	\end{itemize}
\end{itemize}

\begin{Lem}\label{lem:I78}
(a) Suppose $f$ satisfies (F3). If $t_n\to\infty$ and $u_n\to u_0\in X_{\cO}\setminus\{0\}$ as $n\to\infty$, then $$\lim_n\frac{1}{t_n^2}\irn F(x,t_nu_n)\,dx=\infty.$$
(b) Suppose $f$ satisfies (F4). For every $u\in X_{\cO}$ and every $t\ge0$ $$\frac{t^2-1}{2}\irn f(x,u)u\,dx+\irn F(x,u)\,dx-\irn F(x,tu)\,dx\le0.$$
\end{Lem}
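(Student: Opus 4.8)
For part (a), the plan is to exploit (F3) together with a Fatou-type / measure-theoretic argument. Since $u_n\to u_0$ in $X_\cO\subset\cD^{1,2}(\R^N)\hookrightarrow L^{2^*}(\R^N)$, after passing to a subsequence $u_n\to u_0$ a.e. and $|u_n|\le g$ for some $g\in L^{2^*}(\R^N)$. Fix any $\delta>0$ and set $\Omega_\delta:=\{x:|u_0(x)|\ge\delta\}$, which has positive (possibly infinite) measure because $u_0\ne 0$; actually it suffices to pick $\delta$ small enough that $|\Omega_\delta|>0$. On $\Omega_\delta$ we have $|t_nu_n(x)|\to\infty$ for a.e. $x$ (since $|u_n(x)|\to|u_0(x)|\ge\delta$ and $t_n\to\infty$), hence by (F3), writing $F\ge 0$ (which follows from (F4) via (I1), as noted in the excerpt),
\[
\frac{1}{t_n^2}F(x,t_nu_n(x))=\frac{F(x,t_nu_n(x))}{|t_nu_n(x)|^2}\,|u_n(x)|^2\longrightarrow\infty\quad\text{for a.e. }x\in\Omega_\delta.
\]
Then $\frac{1}{t_n^2}\irn F(x,t_nu_n)\,dx\ge\int_{\Omega_\delta}\frac{1}{t_n^2}F(x,t_nu_n)\,dx\to\infty$ by Fatou's lemma. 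The only mild subtlety is ensuring the a.e. convergence of $|u_n(x)|^2$ to a strictly positive number on a set of positive measure; one should be slightly careful to choose the subsequence realizing $\liminf$ of the left-hand side first, then extract the a.e.-convergent sub-subsequence, and conclude the $\liminf$ is $+\infty$, hence so is the original limit.

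For part (b), the plan is to reduce the integral inequality to a pointwise one: it suffices to show that for a.e. $x$ and every $s\in\R$, $t\ge0$,
\[
\varphi_x(t):=\frac{t^2-1}{2}f(x,s)s+F(x,s)-F(x,ts)\le 0.
\]
Differentiating in $t$ gives $\varphi_x'(t)=t f(x,s)s - f(x,ts)s = ts^2\bigl(\tfrac{f(x,s)}{s}\cdot\tfrac{1}{?}\bigr)$—more precisely $\varphi_x'(t)=s\bigl(t f(x,s)-f(x,ts)\bigr)$. Using (F4), that $u\mapsto f(x,u)/|u|$ is nondecreasing on $(-\infty,0)$ and on $(0,\infty)$: for $s>0$ and $t\ge 1$ we get $f(x,ts)/(ts)\ge f(x,s)/s$, i.e. $f(x,ts)\ge tf(x,s)$, so $\varphi_x'(t)\le 0$; for $0\le t\le 1$ we get $f(x,ts)\le tf(x,s)$, so $\varphi_x'(t)\ge 0$. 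The case $s<0$ is symmetric (the sign of $s$ outside compensates), and $s=0$ is trivial. Hence $\varphi_x$ attains its maximum on $[0,\infty)$ at $t=1$, where $\varphi_x(1)=0$. Therefore $\varphi_x(t)\le 0$ for all $t\ge 0$, and integrating over $\R^N$ (the integrals are finite by (F2) and the Sobolev embedding) yields the claim.

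The main obstacle is not conceptual but a matter of bookkeeping: in part (a) one must handle the $\liminf$/subsequence extraction carefully so that the a.e.\ convergence is available simultaneously with the chosen subsequence, and one must invoke $F\ge 0$ (justified through (F4)$\Rightarrow$(I1)) to drop the integral over the complement of $\Omega_\delta$ before applying Fatou. In part (b), the only point requiring care is the case distinction $t\ge 1$ versus $0\le t\le 1$ and the sign of $s$, together with checking that $t\mapsto f(x,ts)$ is well-behaved at $t=0$ (there $\varphi_x'(0^+)$ has a definite sign and $\varphi_x(0)=\frac{-1}{2}f(x,s)s+F(x,s)\le 0$ by (F4), consistent with the monotonicity argument). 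Everything else is routine.
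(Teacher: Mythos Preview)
Your proposal is correct and follows essentially the same route as the paper. For (b) the arguments are identical (reduce to a pointwise inequality, differentiate, and use (F4) to see the auxiliary function attains its maximum at $t=1$); for (a) the paper works on the varying sets $\Omega_n=\{|u_n|\ge\eps\}$ and uses the uniformity in (F3) directly, while you work on the fixed set $\{|u_0|\ge\delta\}$ and apply Fatou---a cosmetic difference---and both arguments tacitly rely on $F\ge 0$ (hence on (F4)) to discard the complementary integral, a point you make explicit and the paper leaves unspoken.
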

\begin{proof}
	$(a)$ Since $X_{\cO}$ is locally compactly embedded into $L^2(\R^N)$, up to a subsequence $u_n\to u_0\neq 0$ a.e. in $\rn$. Moreover, there exists $\eps>0$ such that $\limsup_n|\Om_n|>0$, where $\Om_n:=\{x\in\rn:|u_n(x)|\ge\eps\}$, for otherwise $u_n\to0$ in measure and consequently, up to a subsequence, a.e. in $\rn$.
	It follows from (F3) that
	\[
	\frac{1}{t_n^2}\irn F(x,t_nu_n)\,dx=\irn\frac{F(x,t_nu_n)}{t_n^2|u_n|^2}|u_n|^2\,dx\ge\eps^2\int_{\Om_n}\frac{F(x,t_nu_n)}{t_n^2|u_n|^2}\,dx\to\infty
	\]
	as $n\to\infty$.
	
	$(b)$ For fixed $x\in\rn$ and $u\in\R$ we prove that $\phi(t)\le0$ for every $t\ge0$, where $$\phi(t):=\frac{t^2-1}{2}f(x,u)u+F(x,u)-F(x,tu).$$ This is trivial for $u=0$, so suppose $u\ne0$.
	Note that $\phi(1)=0$, so it is enough to prove that $\phi$ is nondecreasing on $[0,1]$ and nonincreasing on $[1,\infty)$. This is the case in view of (F4) and because
	\[
	\phi'(t)=tf(x,u)u-f(x,tu)u=t|u|u\biggl(\frac{f(x,u)}{|u|}-\frac{f(x,tu)}{|tu|}\biggr)
	\]
	for $t>0$, therefore $\phi(t)\le0$ for every $t\ge0$ as $\phi\in\cC^1([0,\infty))$.
\end{proof}

The following lemma shows that $(M)_0^\beta$ holds for every $\beta>0$.

\begin{Lem}\label{lem:Mbeta}
	Suppose $f$ satisfies (F3) and (F4).\\
(a) For every $\beta>0$ there exists $M_\beta>0$ such that $\limsup_n\|u_n\|\le M_\beta$ for every $(u_n)\subset X_{\cO}$ such that $\cJ(u_n)\le\beta$ for $n$ large and $\lim_n(1+\|u_n\|)\cJ'(u_n)=0$.\\
(b) If the number of critical orbits of $\cJ$ is finite, then there exists $\kappa>0$ such that, if $(u_n),(v_n)\subset X_{\cO}$ are as above for some $\beta>0$ and $\|u_n-v_n\|<\kappa$ for $n$ large, then $\lim_n\|u_n-v_n\|=0$.
\end{Lem}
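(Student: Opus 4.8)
The plan is to establish~(a) by a rescaling argument in the spirit of Jeanjean together with a vanishing/non-vanishing dichotomy treated via Lemma~\ref{Lions2}, and then to deduce~(b) by a concentration argument exploiting that distinct critical points of $\cJ$ are uniformly separated in $X_\cO$. Throughout put $q(v):=\int_{\R^N}|\nabla v|^2+\frac{a}{r^2}|v|^2\,dx$, which is equivalent to $\|v\|^2$ since $a>-(\tfrac{K-2}{2})^2$ (using, when $K>2$, the Hardy inequality of the introduction), so that $\cJ(v)=\tfrac12 q(v)-\cI(v)$ and $\cJ'(v)\phi=q(v,\phi)-\int_{\R^N}f(x,v)\phi\,dx$ with $q(\cdot,\cdot)$ the associated bilinear form. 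Call $(u_n)$ a \emph{Cerami sequence at levels $\le\beta$} if $\cJ(u_n)\le\beta$ for $n$ large and $(1+\|u_n\|)\cJ'(u_n)\to0$; then $\cJ'(u_n)u_n\to0$ with no boundedness needed, and since $\tfrac12 f(x,s)s\ge F(x,s)\ge0$ by~(F4),
\[
\cJ(u_n)=\int_{\R^N}\Bigl(\tfrac12 f(x,u_n)u_n-F(x,u_n)\Bigr)\,dx+\tfrac12\cJ'(u_n)u_n\ge\tfrac12\cJ'(u_n)u_n=o(1),
\]
so $\liminf_n\cJ(u_n)\ge0$ automatically.

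For~(a), suppose $(u_n)$ is unbounded; along a subsequence $q(u_n)\to\infty$. Set $\hat w_n:=u_n/\sqrt{q(u_n)}$, so $q(\hat w_n)=1$ and $(\hat w_n)$ is bounded in $X_\cO$. By Lemma~\ref{lem:I78}(b), $\cJ(tu_n)\le\cJ(u_n)+\tfrac{t^2-1}{2}\cJ'(u_n)u_n$ for all $t\in[0,1]$; hence for each fixed $R>0$ the choice $t=R/\sqrt{q(u_n)}\in(0,1]$ ($n$ large) gives
\[
\frac{R^2}{2}-\cI(R\hat w_n)=\cJ(R\hat w_n)\le\beta+o(1),\qquad\text{i.e.}\qquad\cI(R\hat w_n)\ge\frac{R^2}{2}-\beta+o(1).
\]
Apply a dichotomy to $(\hat w_n)$. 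If it satisfies~\eqref{eq:LionsCond12} at every radius, then (using $0\le2F(x,s)\le f(x,s)s$ from~(F4)) Lemma~\ref{Lions2} with $u_n=v_n=R\hat w_n$ yields $0\le2\cI(R\hat w_n)\le\int_{\R^N}f(x,R\hat w_n)(R\hat w_n)\,dx\to0$, contradicting the lower bound once $R^2>2\beta$. Otherwise vanishing fails, and there are $\zeta_n\in\Z^{N-K}$ and $R_1,c>0$ with $\int_{B((0,0),R_1)}|\zeta_n\ast\hat w_n|^2\,dx\ge c$ along a subsequence; by local compactness $\zeta_n\ast\hat w_n\rightharpoonup\tilde w\ne0$ in $X_\cO$ and, up to a subsequence, a.e.\ in $\R^N$. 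Since $\cJ$ and $q$ are $\Z^{N-K}$-invariant by~(F1), $\tilde u_n:=\zeta_n\ast u_n=\sqrt{q(u_n)}\,(\zeta_n\ast\hat w_n)$ is again a Cerami sequence at levels $\le\beta$ with $q(\tilde u_n)\to\infty$; as $|\tilde u_n|\to\infty$ a.e.\ on $\{\tilde w\ne0\}$, (F3) and Fatou's lemma give
\[
\frac{\cJ(\tilde u_n)}{q(\tilde u_n)}=\frac12-\int_{\R^N}\frac{F(x,\tilde u_n)}{|\tilde u_n|^2}\,|\zeta_n\ast\hat w_n|^2\,dx\longrightarrow-\infty,
\]
so $\cJ(\tilde u_n)\to-\infty$, contradicting $\liminf_n\cJ(\tilde u_n)\ge0$. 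Thus every Cerami sequence at levels $\le\beta$ is bounded; the uniform $M_\beta$ follows by a standard diagonal argument (otherwise one would obtain a single unbounded such sequence).

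For~(b), let $\kappa_0>0$ be a lower bound for $\|c-c'\|$ over all pairs of distinct critical points $c\ne c'$ of $\cJ$ in $X_\cO$. This exists because there are finitely many critical orbits; within each orbit distinct elements are separated by~(G); and for representatives $c,c'$ of two \emph{different} critical orbits, $\|\zeta\ast c-c'\|^2=\|c\|^2-2\langle\zeta\ast c,c'\rangle+\|c'\|^2$ stays bounded away from~$0$ over $\zeta\in\Z^{N-K}$, taking finitely many positive values for bounded $\zeta$ and tending to $\|c\|^2+\|c'\|^2>0$ as $|\zeta|\to\infty$ (since $\zeta\ast c\rightharpoonup0$); take $\kappa_0$ to be the minimum of these finitely many positive constants. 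It suffices to show that any two Cerami sequences $(u_n),(v_n)$ at levels $\le\beta$ with $\limsup_n\|u_n-v_n\|>0$ satisfy $\limsup_n\|u_n-v_n\|\ge\kappa_0$, for then the lemma holds with $\kappa:=\kappa_0/2$. Pass to a subsequence with $\|u_n-v_n\|\to\ell>0$ and put $w_n:=u_n-v_n$, bounded (by~(a)) and $\cO$-invariant. If $(w_n)$ satisfies~\eqref{eq:LionsCond12} at every radius, Lemma~\ref{Lions2} gives $\int_{\R^N}f(x,u_n)w_n\,dx\to0$ and $\int_{\R^N}f(x,v_n)w_n\,dx\to0$, and since $\cJ'(u_n)-\cJ'(v_n)\to0$ in $X_\cO^*$ while $(w_n)$ is bounded,
\[
q(w_n)=\bigl(\cJ'(u_n)-\cJ'(v_n)\bigr)(w_n)+\int_{\R^N}\bigl(f(x,u_n)-f(x,v_n)\bigr)w_n\,dx\longrightarrow0,
\]
forcing $\ell=0$, a contradiction; hence vanishing fails for $(w_n)$. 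Translating by suitable $\zeta_n\in\Z^{N-K}$ as before, $\tilde u_n:=\zeta_n\ast u_n\rightharpoonup\tilde u$ and $\tilde v_n:=\zeta_n\ast v_n\rightharpoonup\tilde v$ with $\tilde u-\tilde v\ne0$ (local compactness). Passing to the limit in $\cJ'(\tilde u_n)\phi\to0$ and $\cJ'(\tilde v_n)\phi\to0$ — local compactness on bounded regions, (F2) to make the tails of the nonlinear term uniformly small — shows $\tilde u,\tilde v$ are critical points of $\cJ$ in $X_\cO$; being distinct, $\|\tilde u-\tilde v\|\ge\kappa_0$, whence $\ell=\lim_n\|\zeta_n\ast w_n\|\ge\|\tilde u-\tilde v\|\ge\kappa_0$, as required.

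The principal obstacle is the non-vanishing alternative in~(a): the rescaled sequence does not converge weakly to~$0$, so the contradiction must be forced by combining a translation, the $\Z^{N-K}$-invariance of~$\cJ$, the superlinearity~(F3) via Fatou's lemma, and the automatic bound $\liminf_n\cJ(u_n)\ge0$ coming from~(F4). Two further delicate points are the uniform gap $\kappa_0$ between distinct critical points — which genuinely uses both~(G) and the finiteness of the set of critical orbits — and the fact that weak limits of Cerami sequences are actual critical points of $\cJ$ in $X_\cO$ (not merely weak solutions in~$\R^N$), which requires controlling the tails of the nonlinear term by~(F2).
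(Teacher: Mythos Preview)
Your proof is correct and follows essentially the same route as the paper: in~(a) you rescale, apply a vanishing/non-vanishing dichotomy via the Lions-type lemmas, use Lemma~\ref{lem:I78}(b) in the vanishing case and (F3) with Fatou's lemma in the non-vanishing case, then conclude by a diagonal argument; in~(b) you exploit the uniform gap between distinct critical points and again split on vanishing/non-vanishing of $u_n-v_n$ via Lemma~\ref{Lions2}. The only cosmetic differences are your use of the form $q$ in place of $\|\cdot\|^2$, your slightly more explicit justification of $\kappa_0>0$, and the contrapositive phrasing of~(b).
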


\begin{proof}
	$(a)$ Let $(u_n)\subset X_{\cO}$ as in the assumptions, suppose that $(u_n)$ is unbounded, and define $\bar{u}_n:=u_n/\|u_n\|$. Passing to a subsequence we may assume that $\|u_n\|\to\infty$ as $n\to\infty$.
	Similarly as in the proof of Lemma \ref{Lions2}, for any $\eps>0$ we find $C_\eps>0$ such that 
	$$\irn F(x,\bar{u}_n)\,dx\leq \eps |\bar{u}_n|_{2^*}^{2^*}+ C_\eps\Phi(\bar{u}_n)$$ 
	for every $n$, where $\Phi$ is defined therein. If $(\bar{u}_n)$ satisfies \eqref{eq:LionsCond12} for every $R>0$, hence the same holds for $(s\bar{u}_n)$ with $s\ge0$, then in view of Lemma \ref{Lions}
	$$\limsup_n\irn F(x,s\bar{u}_n)\,dx\leq \eps s^2\limsup_n|\bar{u}_n|_{2^*}^{2^*}$$ 
	for every $\eps>0$, hence $\lim_n\irn F(x,s\bar{u}_n)\,dx=0$.
	Then applying Lemma \ref{lem:I78}(b) with $u=u_n$ and $t=t_n:=s/\|u_n\|$ we obtain, up to a subsequence, that for every $s\ge0$
	\[\begin{split}
	\beta&\ge\limsup_n\cJ(u_n)\ge\limsup_n\cJ(s\bar{u}_n)-\lim_n\frac{t_n^2-1}{2}\cJ'(u_n)u_n\\
	&=\limsup_n\cJ(s\bar{u}_n)\ge Cs^2-\lim_n\irn F(x,s\bar{u}_n)\,dx=Cs^2
	\end{split}\]
	for some $C>0$, a contradiction. Hence, up to a subsequence, $\lim_n\int_{B((0,z_n),R)}|\bar{u}_n|^2\,dx>0$ for some $R>\sqrt{N-K}$ and $(z_n)\subset\Z^{N-K}$, where $z_n$ maximizes $z\mapsto\int_{B((0,z),R)}|u_n|^2\,dx$. Exploiting the $\Z^{N-K}$-invariance, we can assume that $$\int_{B(0,R)}|\bar{u}_n|^2\,dx\ge c$$ for $n$ large and some $c>0$.
	
	It follows that there exists $\bar{u}\in X_{\cO}\setminus\{0\}$ such that, up to a subsequence, $\bar{u}_n\rightharpoonup\bar{u}$ in $X$ and $\bar{u}_n\to\bar{u}$ in $L^2_\textup{loc}(\rn)$ and a.e. in $\rn$.
	
	From (F4), $2\cJ(u_n)-\cJ'(u_n)u_n=\irn f(x,u_n)u_n-2F(x,u_n)\,dx\ge0$, thus $\bigl(\cJ(u_n)\bigr)$ is bounded and due to (F3) we obtain
	\[
   o(1)=\frac{\cJ(u_n)}{\|u_n\|^2}\le C-\irn\frac{F(x,u_n)}{|u_n|^2}|\bar{u}_n|^2\,dx\to-\infty
	\]
	for some $C>0$, which is a contradiction.	
	This shows that $(u_n)$ is indeed bounded. If by contradiction there exists no upper bound $M_\beta$, then for every $k\in\mathbb{N}$ there exists $(u_n^k)\subset X_{\cO}$ as in the statement such that $\limsup_n\|u_n^k\|>k$ and it is easy to build a subsequence $(u_{n_k}^k)$ that is unbounded, again a contradiction.
	
	$(b)$ Assume that there are finitely many critical orbits of $\cJ$. From (G) we easily see that $$\kappa:=\inf\big\{\|u-v\|:u\ne v\text{ and }\cJ'(u)=\cJ'(v)=0\big\}>0.$$
	
	Let $(u_n),(v_n)$ be as in the assumptions of $(b)$. In view of $(a)$, they are bounded.
	
	If $\irn f(x,u_n)(u_n-v_n)\,dx$ or $\irn f(x,v_n)(u_n-v_n)\,dx$ do not converge to $0$, then in view of Lemma \ref{Lions2} and the $\Z^{N-K}$-invariance there exist $R>\sqrt{N-K}$ and $\eps>0$ such that $$\int_{B(0,R)}|u_n-v_n|^2\,dx\ge\eps.$$
	We can assume that $u_n\rightharpoonup u$, $v_n\rightharpoonup v$ in $X$ and $u\ne v$. Hence $\cJ'(u)=\cJ'(v)=0$ and consequently $$\liminf_n\|u_n-v_n\|\ge\|u-v\|\ge\kappa,$$ in contrast with the assumptions.
	
	Therefore it follows that $\lim_n\irn f(x,u_n)(u_n-v_n)\,dx=\lim_n\irn f(x,v_n)(u_n-v_n)\,dx=0$ and, finally,
	\begin{eqnarray*}
	\|u_n-v_n\|^2&=&\cJ'(u_n)(u_n-v_n)-\cJ'(v_n)(u_n-v_n)+\irn\bigl(f(x,u_n)-f(x,v_n)\bigr)(u_n-v_n)\,dx\\
	&=&o(1)+\irn f(x,u_n)(u_n-v_n)\,dx-\irn f(x,v_n)(u_n-v_n)\,dx\to0.\qedhere
	\end{eqnarray*}
\end{proof}

\begin{proof}[Proof of Theorem \ref{ExMul}]
	%Define the set $$\cN:=\bigl\{u\in X_{\cO}\setminus\{0\}:\cJ'(u)u=0\bigr\}$$ and
	Note that $\cN$ contains all the nontrivial critical points of $\cJ$. Applying \cite[Theorem 3.3]{MeScSz} we obtain a Cerami sequence $(u_n)\subset X_{\cO}$ at level $c:=\inf_\cN\cJ>0$. Lemma \ref{lem:Mbeta}(a) implies that there exists $u\in X_{\cO}$ such that $u_n\rightharpoonup u$ up to a subsequence, thus $\cJ'(u)=0$.
	
	If by contradiction $\irn f(x,u_n)u_n\,dx\to0$, then similarly to the proof of Lemma \ref{lem:Mbeta}(b) we infer that $u_n\to0$, in contrast with $\cJ(u_n)\to c$. Hence, again similarly to the proof of Lemma \ref{lem:Mbeta}(b), $u\ne0$.
	
	Fatou's Lemma and (F4) imply
	\[\begin{split}
	c&=\lim_n\cJ(u_n)=\lim_n\cJ(u_n)-\frac12\cJ'(u_n)u_n=\lim_n\int_{\R^N}\frac12 f(x,u_n)u_n-F(x,u_n)\,dx\\
	&\ge\int_{\R^N}\frac12 f(x,u)u-F(x,u)\,dx=\cJ(u)-\frac12\cJ'(u)u=\cJ(u)\ge c
	\end{split}\]
	and we conclude $\cJ(u)=c$.
	
	Now assume $f$ is odd in $u$, which implies that $\cJ$ is even. The existence of infinitely many $\Z^{N-K}$-distinct critical points of $\cJ$ follows directly from \cite[Theorem 3.5(b)]{MeScSz}. As for the fact that the ground state solution is nonnegative, since $\cJ(u)=\cJ(|u|)$ and $\cJ'(u)u=\cJ'(|u|)|u|$ for $u\in X_{\cO}$, we can replace $(u_n)$ with $(|u_n|)$ and still we obtain a weak limit point, which is a nonnegative ground state solution.
\end{proof}
	
\begin{Lem}\label{positive}
	Suppose that $f$ does not depend on $y$ and satisfies (F5). Then there exists $w\in X_{\cO}$ such that $\irn F(z,w)\,dx>\frac{1}{2}\int_{\R^N}|\nabla_z w|^2 \, dx$.
\end{Lem}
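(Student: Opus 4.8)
The plan is to test the inequality on a \emph{plateau-type} function that equals the constant $u_0$ coming from (F5) on a large set, so that $\int_{\R^N}F(z,w)\,dx$ is bounded below by $\delta$ times the measure of that set, where $\delta:=\mathrm{essinf}_{z\in\R^{N-K}}F(z,u_0)>0$, whereas $\nabla_z w$ is supported only on a thin transition layer and is therefore negligible in comparison.

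Concretely, assume without loss of generality $u_0>0$ (the case $u_0<0$ is symmetric; $u_0\neq 0$ because $F(z,0)=0$), fix a large parameter $R>0$, and set $w=w_R$ with $w_R(y,z):=\phi_R(|y|)\,\psi_R(|z|)$, where $\phi_R,\psi_R\colon[0,\infty)\to\R$ are Lipschitz cut-offs: $0\le\phi_R\le u_0$ with $\phi_R\equiv u_0$ on $[1,R]$ and $\phi_R\equiv 0$ outside $[1/2,R+1]$, $|\phi_R'|\le 2u_0$; and $0\le\psi_R\le 1$ with $\psi_R\equiv 1$ on $[0,R]$ and $\psi_R\equiv 0$ outside $[0,R+1]$, $|\psi_R'|\le 2$. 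Then $w_R$ is Lipschitz with compact support, hence $w_R\in\cD^{1,2}(\R^N)$; it vanishes in a neighbourhood of $\{0\}\times\R^{N-K}$ because $\phi_R$ vanishes near the origin, so $\int_{\R^N}|w_R|^2/r^2\,dx\le 4|w_R|_2^2<\infty$ (for $K>2$ this already follows from Hardy's inequality), whence $w_R\in X$; and $w_R$ is $\cO$-invariant since it depends on $y$ only through $|y|$. Thus $w_R\in X_{\cO}$.

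Now estimate. On the ``good set'' $G_R:=\{y:1\le|y|\le R\}\times B_R^{N-K}$ (the ball of radius $R$ in $\R^{N-K}$) one has $w_R\equiv u_0$, hence $F(z,w_R)\ge\delta$ and $\int_{G_R}F(z,w_R)\,dx\ge\delta|G_R|\ge cR^N$ for $R$ large, with $c>0$. Outside $\supp w_R$ the integrand vanishes; on $\supp w_R\setminus G_R$ one has $0\le w_R\le u_0$, and, using the subcritical bound extracted from (F2) exactly as in the proof of Lemma \ref{Lions2} (namely $|f(x,t)|\le\varepsilon|t|^{2^*-1}+C_\varepsilon\min\{|t|^{p-1},|t|^{q-1}\}$ for $1<p<2^*<q$), one gets $|F(z,t)|\le C_0$ for all $0\le t\le u_0$ and a.e.\ $z$, with $C_0$ independent of $z$; since an elementary computation gives $|\supp w_R\setminus G_R|\le CR^{N-1}$, the contribution of this region is $\ge-C_0CR^{N-1}$. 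Finally, $\nabla_z w_R=\phi_R(|y|)\,\nabla_z\bigl[\psi_R(|z|)\bigr]$ is supported in $\{y:\phi_R(|y|)\neq0\}\times\bigl(B_{R+1}^{N-K}\setminus B_R^{N-K}\bigr)$, and Fubini together with $|\phi_R|_{L^2(\R^K)}^2\le CR^K$ and $\int_{\R^{N-K}}|\nabla(\psi_R(|z|))|^2\,dz\le CR^{N-K-1}$ yields $\int_{\R^N}|\nabla_z w_R|^2\,dx\le CR^{N-1}$. Combining the three bounds,
\[
\int_{\R^N}F(z,w_R)\,dx-\frac12\int_{\R^N}|\nabla_z w_R|^2\,dx\ge cR^N-CR^{N-1},
\]
which is positive once $R$ is large enough; taking $w:=w_R$ for such an $R$ proves the claim.

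The only point requiring care is the uniform-in-$z$ bound $|F(z,t)|\le C_0$ on the transition range $0\le t\le u_0$: (F5) alone says nothing about the sign or the size of $F$ there, so one genuinely needs the global subcritical growth of $f$ provided by (F2) (as already exploited in Lemma \ref{Lions2}). Everything else is elementary bookkeeping, arranged so that the plateau contribution $\sim R^N$ outgrows both the transition-layer error and the Dirichlet term $\frac12\int|\nabla_z w_R|^2$, which are $\sim R^{N-1}$.
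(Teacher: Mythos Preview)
Your proof is correct and follows essentially the same plateau-function strategy as the paper (inspired by Berestycki--Lions): build $w_R=\phi_R(|y|)\psi_R(|z|)$ equal to $u_0$ on a region of measure $\sim R^N$, bound the transition-layer contribution and $\int|\nabla_z w_R|^2$ by $O(R^{N-1})$, and let $R\to\infty$. The only cosmetic differences are that the paper reuses the same cut-off in both variables and leaves the appeal to (F2) for the uniform bound on $F$ over the transition range implicit, whereas you spell it out explicitly.
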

\begin{proof}
Similarly as in \cite[page 325]{BerLions},
for any $R\ge3$ we define an even and continuous function $\phi_R\colon\R\to \R$ such that $\phi_R(t)=0$ for $|t|<1$ and for $|t|>R+1$, $\phi_R(t)=1$ for $2\leq |t|\leq R$ and $\phi_R$ is affine for $1\le|t|\le2$ and for $R\le|t|\le R+1$. Then let $w_R(x) := u_0 \phi_R(|y|) \phi_R(|z|)$. Observe that $w_R\in X_{\cO}$ and there are constants $C_1,C_2, C_3>0$ such that
\begin{eqnarray*}
\int_{\R^N} F(z,w_R)\,dx&\geq& C_1R^N\mathrm{essinf}_{z\in\R^{N-K}}F(z,u_0)-C_2R^{N-1}\sup_{R\leq |u|\leq R+1}\mathrm{esssup}_{z\in\R^{N-K}}|F(z,u)|\\
&&-C_3\sup_{1\leq |u|\leq 2}\mathrm{esssup}_{z\in\R^{N-K}}|F(z,u)|.
\end{eqnarray*}
Moreover,
$$\frac{1}{2}\int_{\R^N}|\nabla_z w_R|^2\leq C_4R^{N-1}$$
for some constant $C_4>0$.
Then for sufficiently large $R>0$ we conclude. 
\end{proof}
	
\begin{proof}[Proof of Theorem \ref{ExMul2}]
		First we prove that $\cJ$ has the mountain pass geometry \cite{AmbRab,Rabin}. Let $w\in X_{\cO}$ as in Lemma \ref{positive}. Due to
		(F2) and the embedding of $X_{\cO}$ into $L^{2^*}(\R^N)$,
		there exists $0<\rho<\|w\|$ such that $\inf\{\cJ(u):u\in X_{\cO}$ and $\|u\|=\rho\}>0$.
		Moreover, for every $\lambda>0$ we have
		\[
		\cJ\bigl(w(\lambda\cdot,\cdot)\bigr)=\frac{1}{2\lambda^{K-2}}\int_{\R^N}|\nabla_y w|^2+\frac{a}{r^2}|w|^2\,dx+\frac{1}{\lambda^K}\int_{\R^N}\frac12|\nabla_z w|^2-F(z,w)\,dx\to-\infty
		\]
		as $\lambda\to 0^+$. The existence of a Palais-Smale sequence $(u_n)\subset X$ for $\cJ|_{X_{\cO}}$ at the mountain pass level $c>0$ follows. Such a sequence is bounded because (F5) holds.
		
		Now, suppose by contradiction that \eqref{eq:LionsCond12} holds for every $R>0$. Fix $R>\sqrt{N-K}$ such that \eqref{eq:LionsCond12} holds with the supremum being taken over $\Z^{N-K}$. Since $F$ and $(z,u)\mapsto f(z,u)u$ satisfy \eqref{eq:LionsPhi} uniformly with respect to $z\in\R^{N-K}$, arguing as in Lemmas \ref{Lions2} and \ref{Lions} we obtain
		\[
		c=\lim_n\cJ(u_n)-\frac12\cJ'(u_n)u_n=\lim_n\int_{\R^N}\frac12 f(z,u_n)u_n-F(z,u_n)\,dx=0,
		\]
		which is a contradiction. It follows that there exist $R>\sqrt{N-K}$ and $\varepsilon>0$ such that
		\begin{equation}\label{eq:bddaway}
		\int_{B((0,z_n),R)}|u_n|^2\,dx\ge\varepsilon
		\end{equation}
		up to a subsequence, where $z_n\in\Z^{N-K}$ maximizes $z\mapsto\int_{B((0,z),R)}|u_n|^2\,dx$.
		Since $\cJ$ is invariant with respect to $\Z^{N-K}$ translations, up to replacing $u_n$ with $u_n(\cdot-z_n)$ we can suppose that $z_n=0$.
		Since $(u_n)$ is bounded, there exists $u\in X_{\cO}$ such that $u_n\rightharpoonup u$ in $X$, which in turn implies that $\cJ'(u)=0$ and that $u_n\to u$ in $L^2\bigl(B(0,R)\bigr)$ and a.e. in $\rn$; in particular, $u\ne 0$ because \eqref{eq:bddaway} holds.
	\end{proof}

\begin{proof}[Proof of Corollary \ref{CorExMul}]
The proof follows from Theorems \ref{ExMul}, \ref{ExMul2}, and \ref{ScalVec}.
\end{proof}

	\section*{Acknowledgements}
	\noindent The authors were partially supported by the National Science Centre, Poland (Grant No. 2017/26/E/ST1/00817). J. Mederski was also partially supported by the Alexander von Humboldt Foundation (Germany) and by the Deutsche Forschungsgemeinschaft (DFG, German Research Foundation) – Project-ID 258734477 – SFB 1173 during the stay at Karlsruhe Institute of Technology. Jacopo Schino is a member of GNAMPA (INdAM).

\end{document}